\documentclass{amsart}
\usepackage[utf8]{inputenc}
\usepackage{graphicx}
\usepackage{rotating}
\pdfoutput=1

\usepackage{graphicx, booktabs,enumerate,microtype}
\usepackage[dvipsnames]{xcolor}
\usepackage{amssymb,amsmath,amsthm, stmaryrd, mathtools} 
\usepackage{tikz, tikz-cd}
\usepackage{hyperref}
    \newcommand{\lm}[1]{\texorpdfstring{#1}{replacedLaTeXcode}} 
\usepackage{mathdots}
\usepackage{comment}
\usepackage{mathrsfs}
\newcommand{\ms}[1]{\mathscr{#1}}

\hypersetup{colorlinks=true, linkcolor=RoyalBlue, citecolor=JungleGreen}

\let\oldtocsection=\tocsection

\let\oldtocsubsection=\tocsubsection

\let\oldtocsubsubsection=\tocsubsubsection

\renewcommand{\tocsection}[2]{\hspace{0em}\oldtocsection{#1}{#2}}
\renewcommand{\tocsubsection}[2]{\hspace{1em}\oldtocsubsection{#1}{#2}}
\renewcommand{\tocsubsubsection}[2]{\hspace{2em}\oldtocsubsubsection{#1}{#2}}

\DeclareRobustCommand{\SkipTocEntry}[5]{}

\newcounter{prcounter}

\newcommand{\lf}{\left}
\newcommand{\ri}{\right}
\newcommand{\md}{\middle}
 
\newcommand{\f}{\frac} 
 
\newcommand{\into}{\hookrightarrow}
\newcommand{\onto}{\twoheadrightarrow}

\newcommand{\iso}{\xrightarrow{\sim}}
\newcommand{\wh}{\widehat}

\DeclareMathOperator{\Gal}{Gal}

\DeclareMathOperator{\End}{End}
\DeclareMathOperator{\Res}{Res}

\DeclareMathOperator{\diag}{diag}
\DeclareMathOperator{\ad}{ad}
\DeclareMathOperator{\Ad}{Ad}

\DeclareMathOperator{\Disc}{\mathrm{Disc}}

\newcommand{\m}[1]{\mathbf{#1}}
\newcommand{\mf}[1]{\mathfrak{#1}}
\newcommand{\mc}[1]{\mathcal{#1}}
\newcommand{\td}[1]{\tilde{#1}}

\newcommand{\R}{\mathbb R}
\newcommand{\Q}{\mathbb Q}
\newcommand{\N}{\mathbb N}
\newcommand{\Z}{\mathbb Z}

\newcommand{\F}{\mathbb F}
\newcommand{\A}{\mathbb A}

\newcommand{\SL}{\mathrm{SL}}

\newcommand{\SO}{\mathrm{SO}}

\newcommand{\GL}{\mathrm{GL}}

\newcommand{\PGL}{\mathrm{PGL}}
\newcommand{\PU}{\mathrm{PU}}

\newcommand{\SU}{\mathrm{SU}}

\newcommand{\Ga}{\mathbb G_a}

\newcommand{\Id}{\mathrm{Id}}
\newcommand{\Mat}{\mathrm{Mat}}

\newcommand{\lb}{\lambda}

\newcommand{\bs}{\backslash}

\newcommand{\disc}{\mathrm{disc}}

\newcommand{\spl}{\mathrm{spl}}

\newcommand{\der}{\mathrm{der}}

\newcommand{\temp}{\mathrm{temp}}

\newcommand{\inv}{\mathrm{inv}}

\newcommand{\Trd}{\mathrm{Trd}}
\usepackage{stackengine,scalerel}

\newtheorem{thm}{Theorem}[subsection]
\newtheorem{prop}[thm]{Proposition}
\newtheorem{clm}[thm]{Claim}
\newtheorem{cor}[thm]{Corollary}
\newtheorem{lem}[thm]{Lemma}

\newtheorem{problem}[thm]{Problem}
\newtheorem{construction}[thm]{Construction}

\theoremstyle{remark}

\newtheorem*{ex}{Example}
\newtheorem{rmk}[thm]{Remark}

\theoremstyle{definition}
\newtheorem{dfn}[thm]{Definition}
\newtheorem{assm}[thm]{Assumption}

\numberwithin{equation}{subsection}

\allowdisplaybreaks

\title[Unitary Ramanujan Complexes]{Ramanujan Complexes from Unitary Groups over Number Fields}
\date{\today}

\author{Rahul Dalal}
\address{Succinct Inc, 101 Mission St. San Francisco, CA}
\email{dalal.rahul@gmail.com}

\author{Alberto Mínguez}
\address{Departamento de \'Algebra and Instituto de Matem\'aticas (IMUS), Universidad de Sevilla, C/ Tarfia s/n, 41012 Sevilla \newline
University of Vienna, 
Fakult{\"a}t f{\"u}r Mathematik,
Oskar-Morgenstern-Platz 1,
1090 Wien
}
\email{alberto.minguez@univie.ac.at}

	\author{Jiandi Zou}
	\address{Institute for Advanced Study in Mathematics of Harbin Institute of Technology, Harbin, China}
	\email{idealzjd@gmail.com}

\begin{document}

\begin{abstract}

In this article, we construct new families of Ramanujan complexes with local structure distinct from all previously known examples. Our approach is based on unitary groups over number fields, more specifically on what we call \emph{super-definite unitary groups}---definite unitary groups that are anisotropic modulo their center at a finite place. These arise naturally as groups of units in central division algebras with involution of the second kind.

Our first main result gives a general construction of infinite families of Ramanujan complexes associated with a super-definite unitary group~$G$ over a totally real number field and a finite place~$v_0$. The structure of the resulting complex is governed by the type of the Bruhat--Tits building at $v_0$. It includes  new examples of type~$A_n$ when $v_0$ is split, and novel families of type~${}^2\!A'_n$, ${}^2 \! A''_n$ (with $n$ even), 
$B$-$C_n$, ${}^2 \! B$-$C_n$ and $C$-$BC_n$ in the non-split case. This construction works uniformly across all ranks.

Since much of the motivation for constructing expander complexes comes from computer science, we investigate the algorithmic explicitness of our construction in the latter part of the paper, and provide an example in rank~5 where it becomes fully explicit. In particular, this example yields \emph{golden gates} for the real Lie group~$\mathrm{PU}(5)$.

\end{abstract}

\maketitle

\tableofcontents

\section{Introduction}

\subsection{Context and Motivation}
Ramanujan complexes are particular types of \emph{expander complexes}. These are combinatorial objects that have recently become important in computer science and mathematics.

The simplest case of an expander complex is an \emph{expander graph}: very roughly, a sparse graph that still has very short paths between vertices. In computer science, such graphs are an important tool for understanding and utilizing pseudorandomness, and therefore have a wide range of applications, from complexity theory to error-correcting codes to network design; see \cite{Vad12} for a survey on pseudorandomness and the central role played by expander graphs. Some deep mathematical theories, such as Kazhdan's property (T) and the Ramanujan conjecture, have been used to give explicit constructions of expander graphs \cite{Margulis73, Margulis84,Margulis88,LPS88}, creating exciting connections between pure mathematics and computer science.

An expander complex $X$ is a higher-dimensional generalization: instead of just vertices and edges, we also keep track of ``higher-dimensional'' connections: triangles made up of 3 vertices, 3-simplices made up of 4 vertices, etc. Together, these give $X$ the structure of a simplicial complex that has an analogous ``expansion property''. See \cite{GK22} for a partial review of the applications of high-dimensional expanders. To name just a few, ideas related to expander complexes have been key to recent major breakthroughs in computer science and combinatorics: for example, in matroid theory \cite{ALGO19}, constructing locally testable classical and high-rate quantum error-correcting codes \cite{DELL22, DELLM25, PK22, EKG22}, constructing lossless vertex expanders \cite{hsieh2025explicita, hsieh2025explicitb}, and providing an alternate proof of the PCP theorem \cite{Din07, dinur2017high}. 
They are even of speculative industrial interest, because the error-correcting codes they give rise to may outperform the currently-used Reed--Solomon codes in the parameters needed to construct efficient zero-knowledge and succinct proof systems---see, for example \cite{Brakedown23, NR24} for some recent, initial explorations using ``exotic'' codes.

There is currently no universally accepted best generalization of the expansion property characterizing expander graphs; various possibilities are surveyed in \cite{Lub18}. Of these, \emph{coboundary expansion} and \emph{topological expansion} seem to be the most relevant in applications. However, these two variants tend to be hard to establish, leading to the consideration of the mathematically more convenient \emph{optimal spectral expansion} condition—this essentially constrains the eigenvalue gaps of operators in a ``Hecke algebra'' of generalized adjacency operators (see \cite{CSZ}). A \emph{Ramanujan complex} is a simplicial complex satisfying this optimal condition of spectral expansion, see Definition \ref{def ramanujancomplex} below for details. Unlike in the graph case, the full relationship between spectral expansion and other types of expansion is still open. Nevertheless, a tight connection is still expected: for example, \cite{KKL14, EK24} construct topological expanders from certain spectral expanders.

\subsection{Our Contribution}
The goal of this paper is to construct novel families of Ramanujan complexes. Our approach builds on the philosophy underlying the construction of Ramanujan graphs by Lubotzky--Phillips--Sarnak \cite{LPS88} and its higher-dimensional extension in type $A$ by the Lubotzky--Samuels--Vishne  \cite{LSV05a, LSV05b}, W. Li \cite{Wli} and A. Sarveniazi \cite{Sarveniazi}. In those works, Ramanujan complexes were obtained as quotients of the Bruhat--Tits building associated with $G = \GL_N(F)$, where $F$ is a local field, just as Ramanujan graphs were obtained in \cite{LPS88} as quotients of the Bruhat--Tits tree for $G = \GL_2(F)$. These quotients arise from certain arithmetic lattices obtained by globalizing $G$ into an inner form of a general linear group, and the optimal spectral expansion condition comes from the Ramanujan conjecture. For $\GL(2)$, this conjecture was proved by Deligne over number fields \cite{DeligneWeil} and by Drinfeld over function fields \cite{Drinfeld}. In higher rank, however, the  conjecture over number fields remains open, and therefore all of the above constructions assume that $F$ has positive characteristic, where L. Lafforgue’s work on $\GL(N)$  over function fields applies \cite{Lafforgue}.

Instead of globalizing $G$ into an inner form of a general linear group, the idea underlying our work is to globalize $G$ into an inner form of a unitary group, more precisely into what we call a \emph{super-definite unitary group}—that is, a definite unitary group such that at one finite place it is anisotropic modulo its center (i.e., the units of a division algebra). Such unitary groups can be constructed using involutions of the second kind. 
Unfortunately, the Langlands correspondences in positive characteristic are not yet developed enough to extend the argument of \cite{LSV05b} to these unitary groups. 

The key insight of this work is that, surprisingly, the situation over number fields is much better due to powerful trace-formula techniques that are unavailable over function fields.  Specifically, the Ramanujan property can be deduced from the (almost complete) endoscopic classification of representations of non-quasi-split unitary groups \cite{KMSW14} together with the work of A. Caraiani \cite{Car12}. Moreover, nothing prevents us from constructing Ramanujan complexes at the other finite places of our super-definite unitary groups as well, producing complexes with significantly different local structures compared to previous examples. 

In the case of expander graphs, having a large ``library'' of local structures to select from is critical for applications. For example, the Sipser--Spielman construction of linear-time encodable and decodable expander codes \cite{SS96, Spiel95} specifically requires biregular graphs with restricted bidegrees instead of purely regular graphs. In the case of higher-dimensional complexes, a recent work \cite{DDL24} on agreement testing needed constructions coming from symplectic type-$C_n$ complexes. 

More precisely, our first main result, Theorem~\ref{thm:abstractramanujan}, provides infinite families of Ramanujan complexes of increasing size, given a super-definite unitary group $G$ defined over a totally real number field $F$ and a place $v_0$ of $F$. Information about the structure of the resulting complex can be read off from the type of the Bruhat--Tits building $\mc B(G_{v_0})$, using the tables in \cite{Tit79}, which determine the reductive quotients of the special fibers of various integral models of $G$; see \S\ref{sec:complextype}. In particular, in the split case, this yields new families of Ramanujan complexes of type $A_n$ while in the non-split case, we get families of various novel types ${}^2 \! A'_n$, ${}^2 \! A''_n$ (with $n$ even),  
$B$-$C_n$, ${}^2 \! B$-$C_n$ and $C$-$BC_n$. 
This abstract construction works for all ranks $n$. 

\subsubsection{Explicitness}

Since our motivation comes from computer science applications, a substantial and important part of this article focuses on one particular example where we can make our complexes \emph{explicit}. By an ``explicit construction'', we mean a construction of expander complexes that can potentially serve as a part of a larger \emph{practical} algorithm. 

At present, there are very few constructions of good higher-dimensional Ramanujan complexes beyond the type-$A_n$ case also covered by \cite{LSV05a, LSV05b}; to our knowledge, the only example is \cite{EGGG}, via $\SO(5)$. Here, only a weaker ``Ramanujan density'' property is achieved, and the construction is not fully explicit. In the case of Ramanujan \emph{graphs}, there are many examples coming from rank-$3$ super-definite unitary groups: Ballantine et al. \cite{BFGKMW} and Ballantine--Ciubotaru \cite{BC} give some that are partially explicit while Evra--Parzanchevski and Evra--Feigon--Maurischat--Parzanchevski \cite{EP22, EFMP23} give ones that are fully explicit in this strong sense.

Part 2 of the paper focuses on making our construction fully explicit. For that, we require an additional assumption that $G$ has class number one, i.e., that $G(\A^\infty) = K^\infty G(F)$ (see Paragraph~\ref{sub:notation} for standard notation on ad\`ele groups), and that $K^\infty \cap G(F) = \{1\}$. Under these assumptions, for all finite places $v_0$, the group $\Lambda_{v_0} := G(F) \cap K^{\infty, v_0}$ acts simply transitively on $G_{v_0}/K_{v_0}$, enabling a highly explicit construction of the complex. In Section~\ref{sec:explicitconstruction}, we explain how this additional ``class-number-one'' property facilitates a strategy to render our abstract construction explicit. More precisely, we describe explicit constructions for the related Ramanujan complexes in the split case (type $A_{n}$) and unramified inert case (type $\,^{2}A_{n}'$). 
With a bit more effort, a similar construction should work for other types we mentioned before; however, for the sake of brevity, we restrict our attention to these two most important cases, which occur at almost all non-archimedean local places.

The caveat is that these assumptions imply, by \cite{MSG12}, that the rank of the unitary group must be less than or equal to~$5$. That same paper provides an example of a super-definite unitary group in five variables, along with a compact subgroup $K^\infty$ satisfying these conditions. We implement our explicit construction using this example in the second half of the paper. More precisely, in Section \ref{sec:thegroup}-\ref{sec:integralstructure}, we explicitly construct our desired division algebra $D$ of degree 5 over $E=\Q(\sqrt{-7})$, an involution $\iota$ of second kind on $D$ together with a $\iota$-stable maximal order $\Lambda_D^{\max}$ of $D$. In Section \ref{sec:thelattice}, using this maximal order we construct the group $\Lambda_{0,p}=\Lambda_{v_0}$ mentioned above with $v_0$ being a fixed prime $p$. In Section \ref{sec:gatesets}, we explain the explicit construction of the related Ramanujan complex, where the key point is to fully describe the finite set of the so-called ``golden gates'' $S_{\bar \Lambda_{0,p}}$. Interestingly, this finite set of precomputed elements is named ``golden gates'', because they can computably and efficiently approximate arbitrary group elements on $\PU(5)$ over $\R$. Golden gates on unitary groups are of interest in quantum computing and were previously studied in \cite{PS18, EP22, DEP}; see Definition \ref{def:gates}. The precomputation of such a set requires solving an explicit norm equation in $\Lambda_{D}^{\max}$, whose detailed procedure is described in Section \ref{sec:findinggates}. Finally in Section~\ref{sec:algo} we summarize our algorithm. 

While the machinery used in Part 1 to prove that our expanders are optimal relies on the latest advances in the theory of automorphic forms, the explicit construction in Part 2 is carried out using classical algebraic number theory---our main reference is a paper by Ralph Hull from 1935! 

Finally, we sum up our main claims for explicit constructions, one formal and one informal:

\begin{thm}\label{thm:introalgoclaim}
Let $p \neq 2,7$ be prime and let $\mc B_p$ be the Bruhat-Tits building for 
\begin{itemize}
    \item  $\GL_5 (\Q_p)$ if $p \equiv 1,2,4 \pmod 7$,
    \item $U_5(\Q_p)$ (the quasisplit unitary group with respect to the unramified quadratic extension of $\Q_p$) if $p \equiv 3,5,6 \pmod 7$.
\end{itemize}
Then we give an explicit algorithm which, given any $n$ relatively prime to $2 \cdot 7 \cdot p$  produces a Ramanujan Complex $\mc X_n$ with universal cover $\mc B_p$. The algorithm runs in time polynomial in $n$ and the size of  $\mc X_n$ grows polynomially in $n$.

\end{thm}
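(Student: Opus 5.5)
The plan combines the abstract Ramanujan result (Theorem~\ref{thm:abstractramanujan}) with the class-number-one example of \cite{MSG12}.

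\textbf{Step 1: the group and the lattice.} I take $E=\Q(\sqrt{-7})$ and a degree-$5$ central division algebra $D$ over $E$, carrying an involution $\iota$ of the second kind. I then set $G$ to be the associated super-definite unitary group over $F=\Q$, and fix a $\iota$-stable maximal order $\Lambda_D^{\max}$ defining $K^\infty$. The class-number-one property $G(\A^\infty)=K^\infty G(\Q)$ and the condition $K^\infty\cap G(\Q)=\{1\}$ I take from \cite{MSG12}.

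The local type at $p$ is then read off. For $p\neq 2,7$, the splitting behaviour of $p$ in $E$ is governed by $(-7/p)$: $p$ splits exactly when $p\equiv 1,2,4 \pmod 7$, and is inert when $p\equiv 3,5,6\pmod 7$. Since $D$ ramifies only at places above $7$ (and possibly $2$), $G_p$ is $\GL_5(\Q_p)$ in the split case and the quasi-split $U_5(\Q_p)$ in the inert case. So $\mc B_p$ is the correct building.

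The lattice is $\Lambda_{0,p}=G(\Z[1/p])\cap K^{\infty,p}$. By the class-number-one property, this group acts simply transitively on the vertices of a fixed type, namely $G_p/K_p$, so the Cayley-type structure on $\Lambda_{0,p}$ recovers $\mc B_p$.

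\textbf{Step 2: the finite quotients.} For $n$ coprime to $2\cdot7\cdot p$, I define the principal congruence subgroup $\Lambda_{0,p}(n)$ as the kernel of reduction modulo $n$. The complex is then $\mc X_n=\Lambda_{0,p}(n)\backslash\mc B_p$.

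The Ramanujan property follows from Theorem~\ref{thm:abstractramanujan} with $v_0=p$. The key input is that the level at $n$ is hyperspecial-principal away from the ramified places, which is where coprimality to $14$ is used. One also needs $\Lambda_{0,p}(n)$ to act without fixed points, so that the quotient is a genuine simplicial complex with universal cover $\mc B_p$. This follows from torsion-freeness of the congruence subgroup, which in turn is immediate from $K^\infty\cap G(\Q)=1$.

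The size of $\mc X_n$ equals the number of vertices of $\mc B_p$ modulo $\Lambda_{0,p}(n)$. This is $[\Lambda_{0,p}:\Lambda_{0,p}(n)]$ times a constant, and is bounded by $|G(\Z/n)|=O(n^{25})$, which is polynomial in $n$.

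\textbf{Step 3: the algorithm.} First, precompute the finite set of golden gates $S_{\bar\Lambda_{0,p}}$. These are the elements of $\Lambda_{0,p}$ sending the base vertex to its neighbours in $\mc B_p$, and they are found by solving the norm equation in $\Lambda_D^{\max}$ (reduced norm a power of $p$, with unitarity). Second, reduce these gates modulo $n$ inside the finite group $\Lambda_{0,p}/\Lambda_{0,p}(n)\subset G(\Z/n)$. Third, run a breadth-first search from the identity to enumerate this image group; this gives the vertices of $\mc X_n$. Edges come from right multiplication by the gates, and a simplex is a set of mutually adjacent vertices of the correct types. Every operation is arithmetic in a fixed-dimension algebra over $\Z/n$, so the running time is polynomial in the size of $\mc X_n$, hence polynomial in $n$.

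\textbf{Main obstacle.} I expect the hardest part to be Step 3's gate computation: showing that the norm-equation solutions give exactly the neighbours in $\mc B_p$ with the correct types. In the inert case this means matching the building of $U_5$ rather than $\GL_5$, and checking that the gates generate $\Lambda_{0,p}$. I would first try to reduce it to the simple transitivity from Step 1, combined with an explicit local model of the lattice chain at $p$ coming from $\Lambda_D^{\max}\otimes\Z_p$.
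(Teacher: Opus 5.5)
Your outline has the same overall shape as the paper's argument: the \cite{MSG12} group over $E=\Q(\sqrt{-7})$, Theorem~\ref{thm:abstractramanujan}, gates from a norm equation, and Cayley-type quotients mod $n$. However, the golden class-number-one input you ``take from \cite{MSG12}'' is not stated there for $G$, and for the $K^\infty$ you define it is false.

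That paper works with $G^{\der}$. For a specific coherent family of parahorics (the unique one at $2$, a special one at $7$, hyperspecial elsewhere), it shows that the normalizer of $\Lambda^{\der}_{0,p}$ in $G_{\ad,p}$ acts simply transitively on vertices. The image of $\Lambda^{\der}_{0,p}$ has index $5$ in that normalizer. To get a golden $K_0^\infty$ for $G$ you must do three things:
\begin{enumerate}
\item Check that your order yields exactly these parahorics (Propositions~\ref{prop model} and~\ref{prop:coherent}).
\item Show that $\Lambda_{0,p}$ fills out the whole normalizer. The paper builds a norm-one $l_0\in L$ with $N_{L/E}(l_0)=(r+s\sqrt{-7})/(r-s\sqrt{-7})$ via Lemma~\ref{lem H1N1trivialglobal}; its determinant is a uniformizer at a suitable split $p$ (Lemma~\ref{lem Lambdapsimtran}).
\item Kill the center, using $Z_G(\Q)\cap K_0^\infty\subseteq\{\pm1\}$.
\end{enumerate}
The third point breaks your setup. $-1\in G(\Q)$ lies in every $\ms G_0(\Z_q)$, so a $K^\infty$ ``defined by the maximal order'' is not golden. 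In the inert case $-1$ is not in the split center, so $\Lambda_{0,p}$ would not act simply on $G_px_0$. The paper instead takes $K_{0,7}$ to be the index-$2$ connected special parahoric of $\ms G_0(\Z_7)$. This excludes $-1$ because $\mathrm{Nrd}_{D/E}(-1)=-1\not\equiv 1\pmod{\rho_7}$ (Remark~\ref{rmk: at7}). Also, $D$ ramifies only at $\rho_2,\bar\rho_2$, not above $7$. The prime $7$ matters because $E/\Q$ ramifies there, which is exactly what makes $\ms G_0(\Z_7)$ special and disconnected.

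Steps 2--3 also have the geometry wrong, in three ways.
\begin{enumerate}
\item Goldenness only prevents nontrivial elements of $\bar\Lambda_{0,p}$ from fixing vertices in $G_px_0$. An element may still rotate a chamber or fix a non-hyperspecial vertex (compare the $\PGL_2(\Q_p)$ example preceding Proposition~\ref{prop:GammaonX}). So torsion-freeness of $\Lambda_{0,p}(n)$ must come from the congruence condition, not from goldenness.
\item Even then, freeness only gives a covering. For the quotient to be a simplicial complex you need a displacement bound such as Assumption~\ref{a:GammaXsimplicialw}. This holds only for deep enough level and is a hypothesis of Proposition~\ref{prop:explicitramanujan}. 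Likewise, simplices downstairs should be translates of simplices at the base vertex, as in Construction~\ref{cons:split}, not arbitrary cliques.
\item Most seriously, in the inert case the gates do not send $x_0$ to neighbours. $G_px_0$ is the set of type-$0$ vertices, which are pairwise non-adjacent, and the gates move $x_0$ to type-$0$ vertices at distance $2$. Your BFS therefore produces only type-$0$ vertices, and ``edges'' from gate multiplication join non-adjacent vertices.
\end{enumerate}
The missing idea for the inert case is Construction~\ref{cons:nonsplit}. Each distance-$2$ type-$0$ vertex has a unique midpoint at distance $1$, which gives a partition $Q=\bigsqcup_j T_j$. The type-$1$ and type-$2$ vertices of $\mc X_n$ are then recovered as the sets $x\cup\{s_tx : t\in T_j\}$. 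The labelling $x\mapsto s_x$ is computed from the self-dual lattice model mod $p^2$ (Construction~\ref{cons:Qinert}).

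Finally, a misattribution: the Ramanujan property needs no condition on the level, since $N=5$ is prime (case (1) of Theorem~\ref{thm:abstractramanujan}). Coprimality to $2\cdot 7$ is used instead for $\Lambda_{0,p}/\Lambda_p\cong U_5(\Z/n)$ and the mod-$n$ trivializations (Lemmas~\ref{lem:Kmodn} and~\ref{lem:unitarymodn}).
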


This algorithm and the exact size of $\mc X_n$ is described in \S \ref{ssec:algodetails}. Since the universal cover of $\mc X_n$ is 
$\mc B_p$, its local structure is completely determined: in particular, 
the types of simplices and their incidence relations are fixed 
(see \S\ref{sec:complextype}). Furthermore, there is an efficient way of explicitly describing this local structure (\emph{cf.} \S \ref{sec: localstru}).

\begin{clm}[Informal]\label{clm:practicality}
Fix $p$. Given a once-and-for-all precomputed finite list of data depending just on $p$, the algorithm in Theorem \ref{thm:introalgoclaim} is practically executable on a modern computer for $p = 3,11$. 
\end{clm}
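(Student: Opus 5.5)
Since Claim~\ref{clm:practicality} is informal, I would not try to prove it as a theorem. Instead I would break it into three quantitative statements, each of which can be backed either by an explicit count or by an actual computation. The plan is to make the algorithm of \S\ref{ssec:algodetails} concrete and bound the cost of each phase separately. The phases are: (i) the once-and-for-all precomputation of the gate set $S_{\bar\Lambda_{0,p}}$; (ii) the local data at $p$, namely the neighbourhood of a vertex in $\mc B_p$ and its simplicial structure (\S\ref{sec: localstru}); and (iii) the per-$n$ step that produces $\mc X_n$ as a quotient of $\mc B_p$ by a congruence subgroup $\Lambda_{0,p}(n)$. For (iii) the vertices of $\mc X_n$ are identified with the finite group $\bar\Lambda_{0,p}/\bar\Lambda_{0,p}(n)$. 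This identification rests on the class-number-one property, which gives simple transitivity of $\Lambda_{v_0}$ on vertices. The complex is then a Cayley-type complex: it is generated by the images of the gates, computed by reducing precomputed matrices modulo $n$ and multiplying them in $G(\Z/n)$.

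For step (i), the key point is that gates correspond to elements of the maximal order $\Lambda_D^{\max}$ of reduced norm a fixed power of $p$ (up to units) whose $\iota$-twisted norm is controlled. Because $G$ is definite, these elements lie in a bounded region of a positive definite lattice of rank $2\cdot 25=50$ over $\Z$. Section~\ref{sec:findinggates} reduces the norm equation to enumerating short vectors in this lattice. I would therefore bound the enumeration radius as a function of $p$ and estimate the number of lattice points of that norm. I would then argue that for $p=3$ (inert, type ${}^2A_4'$) and $p=11$ (split, type $A_4$) a Fincke--Pohst enumeration after LLL reduction terminates in reasonable time. The number of gates should match the number of neighbours of a hyperspecial vertex in $\mc B_p$. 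For $p=11$ this is the sum of Gaussian binomials $\sum_{k=1}^4\binom{5}{k}_{11}$. For $p=3$ it is the corresponding count of isotropic subspaces in the unitary building. This match is also a correctness check on the output.

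For (ii) and (iii), the local structure is an explicit finite flag-variety computation over $\F_{p}$ or $\F_{p^2}$, which is cheap. The per-$n$ cost is linear in the size of $\mc X_n$, which is roughly $n^{24}$ times constants, times the number of gates. Consequently, ``practical'' should be read as holding for the precomputation and for moderate $n$. The main obstacle I expect is step (i): the enumeration cost grows quickly with $p$ and with the lattice dimension. Demonstrating practicality there is really empirical, so I would first attempt the computation for $p=3$, where the radius is smallest. I would then do $p=11$, exploiting the $\Lambda^{\max}_D$-unit action to enumerate only orbit representatives, and report the running times and verification of the gate counts as the evidence for the claim.
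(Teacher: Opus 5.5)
Your proposal puts its weight on the part the claim excludes. The statement is conditional on a precomputed list depending only on $p$: the gate set $S_{\bar\Lambda_{0,p}}$ together with the matching $Q \to S_{\bar\Lambda_{0,p}}$. Your phase (i) is therefore not something you need to justify. Worse, the evidence you propose for it is exactly what the paper says it cannot provide. In \S\ref{subsection strategy}, the enumeration range needed for Problem~\ref{prob Qmaxboundcharpoly} already contains on the order of $10^{14}$ lattice vectors for $p=11$. The authors state that even deciding whether this search is tractable is beyond their expertise, and they exhibit only a single gate. Your assertion that LLL plus Fincke--Pohst terminates in reasonable time for $p=3,11$ is therefore unsupported. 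The proposed saving from a unit action is also illusory. Left or right multiplication by a unit $u$ of $\Lambda_D^{\max}$ preserves $\iota(x)x=p^e$ only if $\iota(u)u=1$. Since $K_0^\infty$ is golden, $\{g\in\Lambda_D^{\max} : \iota(g)g=1\}=\{\pm 1\}$, so there are essentially no orbits to quotient by. As written, your argument makes the claim depend on an open computation.

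The second problem is your reading of ``practical'' in phase (iii). Building all of $\mc X_n$ at cost $|\mc X_n|\cdot|S_{\bar\Lambda_{0,p}}|$ is not feasible for any admissible $n$. Already for $p=11$ and the smallest nontrivial choice $n=3$, there are $|U_5(\F_3)|/|U_1(\F_3)|\approx 2.6\cdot 10^{11}$ vertices, each with $3961830$ gate-neighbours. So ``moderate $n$'' describes no actual case.

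The paper's justification is local instead. First, the reduced gates $\m C_n^{-1}\m B_n^{-1}S_{\bar\Lambda_{0,p}}\m B_n\m C_n \bmod n$ are computed once per $n$. After that, listing all vertices and simplices adjacent to a given vertex via Constructions~\ref{cons:split} and~\ref{cons:nonsplit} costs $|S_{\bar\Lambda_{0,p}}|$ products of $5\times 5$ matrices. By \S\ref{sssec: Countgates}, this is $3961830$ products for $p=11$ and $765672$ for $p=3$. That is a few million small matrix multiplications, independent of $|\mc X_n|$. This per-vertex count, not an enumeration timing, is the content of the claim, and your proposal misses it.

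Your sanity check for $p=3$ is also off. In the inert case, gates correspond to type-$0$ vertices at distance $2$ from $x_0$, not to neighbours of $x_0$. Their number is $p(p^2+1)(p^5+1)+[(p+1)(p^3+1)-1](p^3+1)(p^5+1)$.
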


We emphasize that we have not implemented the algorithm---due to its complexity, doing so would constitute a substantial software engineering project. Also, since the number of vertices of $\mc X_n$ is roughly given by a polynomial in $n$ of degree $25$, when $n$ is large it is hopeless to fully compute and store the entire complex explicitly. Thus, by ``practically executable" we rather mean that for any vertex in the complex, we can determine all the adjacent vertices and the related simplices, which requires on the order of several million small matrix multiplications (\emph{cf.} \S \ref{sssec: Countgates})  over a degree 10 extension of $\Q$ 
and is thus practically feasible. 

The practicality of our algorithm also comes with an additional important qualification: the list of precomputed data in Claim \ref{clm:practicality}. Although we do not compute these lists here, as one possible strategy, we reduce the problem to finding all vectors of a given small norm in a certain 25-dimensional integer lattice and checking if their ``characteristic polynomials'' are equal to the given one, see Problem \ref{prob Qmaxboundcharpoly}. This is related to lattice-norm problems that are well-studied due to their applications in cryptography, but are correspondingly known to be computationally challenging in general.

Solving the problem in this particular case, or even determining if it is computationally tractable on modern hardware, is far out of the scope of the authors' current expertise. Nevertheless, we are able to find at least one explicit gate (\emph{cf.} \S \ref{subsection findingexplicitgates}), giving hope that more computational expertise could fully solve the problem. 

 \subsubsection{Possible Extensions}
 We expect there to also be examples of class number one, super-definite unitary groups in rank $4$ (beyond the already-studied rank $2$ and $3$ cases---see Remark \ref{rem:superdefiniteexamples}). In addition, the techniques of \cite{EP24} suggest a  (much more complicated) extension of this construction that works for non-trivial but small class number. 
 
 The results here should also be extended to the function field setting. At the moment, there isn't a good replacement for the endoscopic classification in positive characteristic so the argument of Theorem \ref{thm:abstractramanujan} doesn't generalize. However, the geometric methods used to study the positive characteristic case are rapidly progressing, so it is reasonable to expect a workaround to be available soon---see Remark \ref{rem:abstractramanujanff}. It is therefore worth exploring if the constructions of class-number-one inner forms of $\GL_N$ from \cite{CS98} extend to unitary groups, potentially giving examples in arbitrarily high rank. 

\subsubsection{Conditionality}
The proof of Theorem \ref{thm:abstractramanujan} depends heavily on Mok and Kaletha--Mínguez--Shin--White's endoscopic classifications for unitary groups \cite{Mok15} and \cite{KMSW14}. Both depend on the unpublished weighted twisted fundamental lemma. The second in addition pushes many technical details to a specific reference ``[KMSb]'' that is not yet publicly available. 

We note that \cite{AGIKMS24} recently resolved the dependence of \cite{Mok15} and \cite{KMSW14} on the unitary analogues of the unpublished references ``[A25][A26][A27]'' in \cite{Art13}.

\subsection{Acknowledgments}
We thank Alexander Bertoloni-Meli and Daniel Li-Huerta for explaining many aspects of function-field Langlands correspondences to us and their potential relevance to this project. We also thank Goulnara Arzhantseva, Shai Evra, Mathilde Gerbelli-Gauthier, David Schwein and Hongjie Yu for helpful conversations. 

All authors were supported by Principal Investigator project PAT4832423 of the Austrian Science Fund (FWF) while working on this project. The second author was also supported by the project ATR2024-154613 funded by MICIU/AEI/10.13039/ 501100011033.

\subsection{Notation}\label{sub:notation}
\subsubsection{Conventions}

Let $E$ be a number field and let $v$ be a place of $E$. We denote by $E_v$ the completion of $E$ at $v$. If $v$ is finite, we write $\mc O_{E_v}$ for the ring of integers of $E_v$.

The ring of adèles of $E$ is
\[
\A_E := \prod_v'  E_v,
\]
the restricted product with respect to the subrings $\mc O_{E_v}$ at finite places.

For an $E$-algebra $B$, we define
\[
B_v := B \otimes_E E_v.
\]
Similarly, if $\mc O_B$ is an $\mc O_E$-algebra, we define
\[
\mc O_{B,v} := \mc O_B \otimes_{\mc O_E} \mc O_{E_v}.
\]

If $F \subseteq E$ is a subfield and $p$ is a place of $F$, we define
\[
B_p := B \otimes_F F_p 
= B \otimes_E \bigl(E \otimes_F F_p\bigr) 
= \prod_{v \mid p} B_v
\]
and
\[
\mc O_{B,p} := \mc O_B \otimes_{\mc O_F} \mc O_{F_p}
= \mc O_B \otimes_{\mc O_E} \bigl(\mc O_E \otimes_{\mc O_F} \mc O_{F_p}\bigr)
= \prod_{v \mid p} \mc O_{B,v},
\]
where we use, for example, \cite[Proposition II.\S 3.4]{Ser13} to compute $\mc O_E \otimes_{\mc O_F} \mc O_{F_p}$.

Let $S$ be a subset of the finite places of either $E$ or $F$. We define
\[
B_S := \prod_{v \in S} B_v,
\qquad
B^S := \prod_{v \notin S} B_v,
\qquad
\mc O_{B,S} := \prod_{v \in S} \mc O_{B,v},
\qquad
\mc O_B^S := \prod_{v \notin S} \mc O_{B,v}.
\]
If $\gamma \in \mc O_E$, we define
\[
\mc O_{B,(\gamma)} := B \cap \mc O_{B,S},
\]
where $S$ is the set of places $v$ dividing $\gamma$.

For a variety $X$ over $E$, we define $X_S := X(E_S)$ and $X^S := X(E^S)$. Depending on the context, these may be regarded via their natural coordinate embeddings into $X(\A_E)$.

\subsubsection{Notation Reference} 
We use the following non-standard notation across multiple sections:

\noindent \underline{Abstract Construction}
\begin{itemize}
    \item $\mc B(G)$ is the reduced Bruhat--Tits building of a group $G$ as in \S \ref{sub:BT}.

    \item $X(K^{\infty, v_0}) := X_G(K^{\infty, v_0})$ is the Ramanujan complex from group $G$, finite place $v_0$, and open compact $K^{\infty, v_0}$ from Definition \ref{def:complex}.
    \item  $\Lambda_{0, v_0}$, $\Lambda_{v_0}$ are lattices in $G_{v_0}$ assigned to Ramanujan complexes satisfying Assumption \ref{a:main}. They are related to the open compact groups $K_0^{\infty}$ and $K^{\infty}=K(\mf n)$ of $G^{\infty}$ respectively. 
    \item $\bar \Lambda_{0,v_0}$ is a central quotient of $\Lambda_{0,v_0}$ from \eqref{eq:barlambda}. 
    \item $S_{\bar \Lambda}$ is the generating set of ``gates'' of an appropriate lattice $\bar \Lambda$ as in Definition \ref{def:gates}.
\end{itemize}

\noindent \underline{The Explicit Example}
\begin{itemize}
    \item $E$ is the number field $\Q(\sqrt{-7})$.
    \item $D$ is a particular division algebra over $E$ constructed in \S\ref{sec:Dcons}.
    \item $L,M$ are number fields used to construct $D$ in \S\ref{sssec:prime}.
    \item $p_0$ is a specific prime number, and $\rho_2,\bar \rho_2,\rho_{p_0},\bar \rho_{p_0},\delta, \alpha_i, a_i,b$ are specific algebraic numbers in \S\S \ref{sssec:prime}, \ref{sssec divalg}.

    \item $u$ is a particular cyclic generator of $D$.
    \item $\eta$ is a realization of $D$ as a subalgebra of matrices in \eqref{eq Dgenerators}.
    \item $\iota$ is an involution of the second kind on $D$ defined in \S\ref{sssec:iota}.
    \item $G$ is a unitary group defined in \S\ref{sec:Gcons}.
    \item $\Lambda_D^{\max}$ is a maximal order of $D$ and $u_+,u_-,y_+,y_-$ are specific elements of $D$ needed to define $\Lambda_D^{\max}$ in \S\ref{ssec:maximalorder}.

    \item $A_{\max},B_{\max}$ are transition matrices defined in \S \ref{subsection despLambdaDmaxiota}.

    \item $K_0^\infty$ is an open compact subgroup of $G(\A^\infty)$ defined in \S\ref{sec:K0infty}.

    \item The ``dagger'' $(\cdot)^\dagger$ denotes the transpose-conjugate of a  matrix.

    \item $\m U_q$, $\m A$, $\m B_n$ are matrices defined in \S \ref{sec: Explicit Tri}.

    \item $Q_{\max}$ is a positive definite quadratic form over $\Z^{25}$,  and $P_{\gamma}(t)$ is the characteristic polynomial of some $\gamma\in(\Lambda_D^{\max})^{\iota}$ with coefficients being polynomials $P_1(\vec{x}),P_2(\vec{x}),P_3(\vec{x}),P_4(\vec{x}),P_5(\vec{x})$ over $\vec{x}\in\Z^{25}$ defined in \S \ref{subsection QFCPMV}.

\end{itemize}

\part{General Theory}

\section{Background Material}
In this section, we review simplicial complexes and their quotients. We then introduce Bruhat--Tits buildings and their quotients by discrete subgroups, leading to the notion of Ramanujan complexes as higher-dimensional analogues of Ramanujan graphs.
\subsection{Complexes and Buildings}

\subsubsection{Simplicial Complexes} 

A simplicial complex is a generalization of a graph also encoding ``higher-dimensional'' relationships among sets of three or more vertices. Instead of thinking of these relationships as line segments connecting vertices, we think of them as triangles, tetrahedra, $4$-simplices, and so on, depending on their number of vertices. Formally,

\begin{dfn}

A \emph{simplicial complex} $X$ is a  set $X^0$ of \emph{vertices} together with a set of \emph{simplices} still denoted by $X$, so that each simplex $F \in X$ is a finite subset of $X^0$ such that all the subsets of $F$ are also in $X$ (called the \emph{faces} of $F$). We let $X^i$ denote the set of $i$-simplices (i.e. simplices of $i+1$ vertices) and say $X$ is \emph{finite} if $X^0$ is.  We regard $X$ as the union of all simplices and write $X=\bigsqcup_{i=0}^{+\infty}X^i=\bigsqcup_{i=0}^{+\infty}\bigsqcup_{F\in X^i}F$. In particular, we say that $X$ is of dimension $d$ if $X^{d}\neq\emptyset$ and $X^{d+1}=X^{d+2}\dots=\emptyset$, and in this case we call a simplex of size $d$ a \emph{chamber} of $X$. 
\end{dfn}

In particular, $X^0\sqcup X^1$ is called the $1$-skeleton of $X$, which forms a graph.

By realizing each $i$-simplex as an open subset of $\R^i$ equipped with the corresponding topology, we realize $X$ as a topological space as well. Then, a simplex $F'$ is a face of another simplex $F$ if and only if $F'$ lies in the closure of $F$. 

From now on, we further assume $X$ to be a locally finite, connected simplicial complex of dimension $d$.

\subsubsection{Quotients of Complexes}
Now we consider an $\ell$-group $G$ acting faithfully \emph{simplicially} on $X$. Topologically, each element $\gamma\in G$ induces a homeomorphism with respect to the above simplicial topology. Combinatorially, for any $F\in X^i$ and a face $F'$ of $F$, we have that $\gamma(F)\in X^i$ with $\gamma(F')$ being a face of $F$. We further assume that the stabilizer of each $v\in X$ in $G$ is an open compact subgroup. Then $X$ is called a $G$-complex in the sense of \cite[\S 3C]{Fir16}.

Let $\Gamma$ be a closed subgroup of $G$. We would like to realize $\Gamma\backslash X$ as a quotient simplicial complex of $X$, i.e. the quotient topological space $\Gamma\backslash X$ would be a simplicial complex such that the projection $X\rightarrow \Gamma\backslash X$ is a covering map. In general, even if a quotient topological structure of $\Gamma\backslash X$ could be defined, a simplicial structure compatible with that of $X$ requires more conditions. 

\begin{ex}

Let $X$ be the Bruhat--Tits building of $G=\PGL_2(\Q_p)$ (see Paragraph \ref{sub:BT} below) as an infinite regular tree of degree $p+1$ and $\Gamma$ a lattice acting simply transitively on the set of vertices $X^0$ of $X$ (\emph{cf.} \cite[Lemma 7.4.1]{Lub94}, where $\Gamma=\Gamma(2)$). Let the segment $[0,1]$ represent the closure of a chamber of $X$, then the quotient $\Gamma\backslash X$ is homeomorphic to the segment $[0,1/2]$. Thus $\Gamma \backslash X^0$ reduces to the point ${0}$, while $\Gamma \backslash X^1$ becomes the segment $(0,1/2]$, which is not open. It is impossible to give a ``quotient'' simplicial complex structure of $\Gamma\backslash X$ compatible with that of $X$. This counterexample shows that there cannot exist an element $\gamma$ that stabilizes a chamber without fixing it pointwise.

\end{ex}

We have the following proposition determining whether $\Gamma\backslash X$ is a quotient simplicial complex of $X$.

\begin{prop}\label{prop:GammaonX}\cite[Proposition 3.7, 3.9]{Fir16}
The quotient space $\Gamma\backslash X$ is a simplicial complex if and only if
\begin{itemize}
\item $\{\Gamma u_0,\dots,\Gamma u_s\}=\{\Gamma v_0,\dots,\Gamma v_t\}$ implies $\Gamma\{u_0,\dots,u_s\}=\Gamma\{v_0,\dots,v_t\}$ for all pairs of simplices $\{u_0,u_1,\dots,u_s\}$,$\{v_0,v_1\dots,v_t\}\in X.$
\end{itemize}
Moreover, the projection $X\rightarrow \Gamma\backslash X$ is a covering map if and only if
\begin{itemize}
\item The action of $\Gamma$ is free, meaning that the stabilizer of $v$ in $\Gamma$ is trivial for every $v\in X^0$.
\end{itemize}

\end{prop}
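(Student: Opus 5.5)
We verify the two criteria separately, working directly with the combinatorial structure of $X$ as a $G$-complex. The natural candidate for a simplicial structure on $\Gamma\backslash X$ has vertex set $\Gamma\backslash X^0$. A subset $\{\Gamma u_0,\dots,\Gamma u_s\}$ is declared a simplex if it is the image of some simplex $\{u_0,\dots,u_s\}\in X$ whose vertices lie in pairwise distinct $\Gamma$-orbits. Closure under taking faces is immediate, since faces of simplices of $X$ are simplices. Because $\Gamma$ acts simplicially, $\Gamma$ permutes the open simplices of $X$. Hence the quotient space $\Gamma\backslash X$ is the disjoint union of the images of the open simplices, indexed by $\Gamma$-orbits of simplices, $\Gamma\backslash\bigsqcup_i X^i$.

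For the first equivalence, assume the stated condition holds. We compare the geometric realization $|\Gamma\backslash X|$ of the combinatorial quotient with the topological quotient. There is a natural continuous surjection $\Gamma\backslash X\to|\Gamma\backslash X|$, defined simplexwise. The condition says exactly that a simplex of the combinatorial quotient comes from a unique $\Gamma$-orbit of simplices of $X$, which gives injectivity at the level of simplices. Within a single simplex $F$, we also need that two points of the open simplex $F$ identified by $\gamma\in\Gamma$ coincide: i.e.\ any $\gamma$ stabilizing $F$ fixes it pointwise. This follows from the condition applied to faces. If $\gamma F=F$ but $\gamma$ permutes vertices nontrivially, then two distinct vertices of $F$ lie in one $\Gamma$-orbit. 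Then $\{\Gamma u\}$ for a suitable face is hit by two non-equivalent simplices of different dimensions, namely a vertex and the edge $\{u,\gamma u\}$, contradicting the condition. Conversely, if the condition fails, we imitate the $\PGL_2$ example. Two non-$\Gamma$-equivalent simplices with the same vertex orbit set give either a folded simplex, where the image of an open simplex is not open, or two distinct open cells glued along the same vertex set. Neither is allowed in a simplicial complex.

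For the covering statement: if $\Gamma$ acts freely on vertices, then by the above it acts freely on all points, since a fixed point in an open simplex forces a stabilized, hence pointwise-fixed, simplex. We then need a neighborhood of each point that maps homeomorphically. We take the open star of a vertex $v$. Freeness together with local finiteness and discreteness of $\Gamma$ (stabilizers are open compact in $G$, and $\Gamma$ meets them in a finite, here trivial, group) show that the translates $\gamma\,\mathrm{St}(v)$ are pairwise disjoint for $\gamma\ne1$. This may require passing to a barycentric subdivision or shrinking the star so that $\gamma\,\mathrm{St}(v)\cap\mathrm{St}(v)=\emptyset$. Conversely, a covering map between connected spaces has discrete fibres on which deck transformations act freely, so a nontrivial vertex stabilizer, which fixes a point but is nontrivial near it, cannot occur. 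The precise argument uses that $G$ acts faithfully, so a nontrivial $\gamma$ fixing $v$ moves some nearby point.

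The main obstacle is the topological bookkeeping in the first equivalence. One must show that the quotient topology agrees with the weak topology of the combinatorial quotient and that the condition rules out all foldings, not just the edge case. Here I would rely on the detailed argument of \cite[Proposition 3.7, 3.9]{Fir16}, which we cite rather than reprove.
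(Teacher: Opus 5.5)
The paper gives no argument beyond the citation to \cite[Propositions 3.7, 3.9]{Fir16}, so deferring the topological bookkeeping to First is consistent with it. Your reduction in the first part is sound. Applying the condition to a vertex $\{u\}$ and an edge $\{u,\gamma u\}$ shows that no simplex has two vertices in one $\Gamma$-orbit, so setwise stabilizers of simplices fix them pointwise. Each closed simplex of $X$ then maps homeomorphically onto a closed simplex of the combinatorial quotient, which makes the topology comparison routine.

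The genuine problems are in the covering part. For covering $\Rightarrow$ free, the ``precise argument'' you give is false. Faithfulness only says a nontrivial $\gamma$ moves \emph{some} point of $X$, not a point near $v$. For example, any element of the level-$p$ principal congruence subgroup of $\PGL_2(\Q_p)$ fixes the closed star of the standard vertex of the tree pointwise. The correct argument is global, as your first sentence hints. Since $\pi\circ\gamma=\pi$, both $\gamma$ and $\mathrm{id}_X$ are lifts of $\pi$ through the covering $\pi$, and they agree at $v$. As $X$ is connected, uniqueness of lifts gives $\gamma=\mathrm{id}_X$, and only then does faithfulness give $\gamma=1$.

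For free $\Rightarrow$ covering, disjointness of the translates of stars has nothing to do with local finiteness or discreteness, and no subdivision or shrinking is needed. If $\gamma\,\mathrm{St}(v)\cap\mathrm{St}(v)\neq\emptyset$, then $v$ and $\gamma v$ lie in a common simplex. The first condition then forces $\gamma v=v$, and freeness forces $\gamma=1$. Hence $\pi^{-1}(\pi(\mathrm{St}(v)))=\bigsqcup_{\gamma}\gamma\,\mathrm{St}(v)$. On each piece the open map $\pi$ is injective, hence a homeomorphism onto the open set $\pi(\mathrm{St}(v))$.

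This direction really does require the first condition, not just freeness on vertices. Take $\Z/2$ swapping the two endpoints of a single edge, or the paper's own $\PGL_2$ folding example: the action is free on vertices, yet the projection folds edges and is not a covering. You use the first condition only implicitly through ``by the above''. State explicitly that the ``moreover'' clause is to be read with the first condition in force.
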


The following proposition determines whether the quotient simplicial complex is finite.

\begin{prop}\cite[Proposition 3.13]{Fir16}
The quotient simplicial complex $\Gamma\backslash X$ is finite if and only if $G\backslash X^0$ is finite and $\Gamma$ is cocompact in $G$.
    
\end{prop}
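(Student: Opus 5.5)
The plan is to prove the two implications of the equivalence separately, working with the quotient map $\pi\colon X\to\Gamma\backslash X$. By the first part of Proposition \ref{prop:GammaonX}, $\pi$ is simplicial, and its vertices are the orbits $\Gamma\backslash X^0$. Since $X$ is locally finite of dimension $d$, the quotient $\Gamma\backslash X$ is finite exactly when $\Gamma\backslash X^0$ is finite: each vertex orbit meets only finitely many simplex orbits. So the statement reduces to
\[
\Gamma\backslash X^0 \text{ finite} \iff G\backslash X^0 \text{ finite and } \Gamma\backslash G \text{ compact}.
\]

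For the implication ($\Leftarrow$), I would first choose representatives $x_1,\dots,x_r$ of $G\backslash X^0$. For each $i$ let $K_i=\mathrm{Stab}_G(x_i)$, which is open and compact by the definition of a $G$-complex. Then $X^0=\bigsqcup_i G/K_i$ as $G$-sets, and hence
\[
\Gamma\backslash X^0=\bigsqcup_i \Gamma\backslash G/K_i .
\]
Each double coset space $\Gamma\backslash G/K_i$ is finite. Indeed, $\Gamma\backslash G$ is compact and is covered by the open sets given by the images of the cosets $gK_i$; these images are pairwise disjoint, so compactness leaves only finitely many.

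For the implication ($\Rightarrow$), finiteness of $G\backslash X^0$ is immediate, since $G\backslash X^0$ is a quotient of $\Gamma\backslash X^0$. For cocompactness, again write $\Gamma\backslash X^0=\bigsqcup_i \Gamma\backslash G/K_i$ with each piece finite. Fix one index $i$ and representatives $g_1,\dots,g_m$ with $G=\bigcup_j \Gamma g_j K_i$. Then $\Gamma\backslash G$ is the image of the compact set $\bigcup_j g_jK_i$ under the continuous projection, so it is compact.

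The only delicate point is the reduction in the first paragraph. One has to check that finitely many vertex orbits force finitely many simplex orbits, which uses local finiteness and the fact that every simplex is a finite set of vertices. One should also note that $G$ acts faithfully, so nothing degenerate happens in identifying $X^0$ with a union of coset spaces $G/K_i$. This is where I expect the main, if mild, obstacle; everything else is standard topology of $\ell$-groups.
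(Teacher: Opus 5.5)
Your argument is correct. The paper gives no proof of its own beyond citing First's Proposition 3.13, and what you wrote is the standard argument behind that citation: the decomposition $\Gamma\backslash X^0=\bigsqcup_i \Gamma\backslash G/K_i$, combined with openness of the stabilizers $K_i$ for $(\Leftarrow)$ and their compactness for $(\Rightarrow)$. The point you flag as delicate is in fact trivial: the paper calls a complex finite exactly when its vertex set is finite, and a finite vertex set has only finitely many subsets, so local finiteness is not needed; likewise, faithfulness of the action plays no role in the orbit--stabilizer identification $Gx_i\cong G/K_i$.
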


We conclude with the following definition.

\begin{dfn}\cite[Appendix A]{AB08}
Let $X$ be a simplicial complex of dimension $d$ and $\Delta_d$ a fixed simplex of dimension $d$. We call $X$ \emph{colorable} if there exists a simplicial map $f_X:X\rightarrow \Delta_d$ that maps a simplex in $X$ to a simplex in $\Delta_d$ of the same dimension.
    
\end{dfn}

Identifying $\Delta_d$ with ${0,1,\dots,d}$, the map $f_X$ sends each $i$-simplex $F$ of $X$ to a subset of $\{0,1,\dots,d\}$ of cardinality $i+1$, which is called the \emph{type} of $F$.

\subsubsection{Bruhat--Tits Buildings}\label{sub:BT}

Bruhat--Tits theory assigns to a reductive group $G$ over a non-Archimedean local field $\F$ an infinite simplicial complex with a $G$-action. We only sketch some results and leave \cite{Tit79}, \cite{AB08}, \cite{KP23} for more details.

Assume $G$ is simple over $\F$ and write $G=G(\F)$ for short.  Let $\mc B=\mc B(G)$ be the reduced Bruhat--Tits building of $G$, which is a simplicial complex of dimension $d$ with $d$ being the $\F$-semisimple rank of $G$. Moreover, there is a related $G$-action on $\mc B$ which is simplicial and acts transitively on the set of chambers. 
If we further assume the center of $G$ to be finite, then the stabilizer of any point in $\mc B$ is an open compact subgroup of $G$, making $\mc B$ a $G$-complex\footnote{In general, we may replace $G$ by $G^{\mathrm{sc}}$ or $G
_{\mathrm{ad}}$.}. Moreover, $\mc B$ is colorable, allowing us to discuss the types of its facets.

Given a facet $F$ in $\mc B$, there is an associated open compact subgroup $P_{F}$ of $G(\F)$, called a parahoric subgroup, that fixes $F$ pointwise. 
More generally, a related group scheme $\mc G_F$ over $\mc O_{\F}$ can also be constructed whose generic fiber is $G$ and whose $\mc O_{\F}$-points form $P_F$.  
In the literature, there are discrepancies in the definition of a parahoric group. For us, we require the group scheme $\mc G_F$, or equivalently its special fiber to be connected. More precisely, our group $P_{F}$ is exactly the group $G(\F)_{F}^0$ in \cite[Definition 7.4.5.(3)]{KP23}. When $F$ is a chamber, the related parahoric subgroup is called an Iwahori subgroup. At the other extreme, if $F$ is a vertex, then the related parahoric subgroup is maximal among all of them. We may introduce the terminology of ``special, extra-special and hyperspecial'' vertices following \cite[Definition 1.3.39, 7.11.1]{KP23}. In particular, a vertex is hyperspecial if and only if its related group scheme is reductive (\cite[Theorem 9.9.3]{KP23}). Finally, we define the type of a parahoric subgroup as the type of the related facet. Then two parahoric subgroups of the same type are conjugate to each other (but the converse is  not always true in general).

Now we consider a closed discrete subgroup $\Gamma$ of $G$. We consider the related quotient $\Gamma\backslash \mc B$ as a topological space. Under the two conditions of Proposition \ref{prop:GammaonX}, the quotient $[X]:=\Gamma\backslash \mc B$ inherits a simplicial complex structure.

Furthermore, the quotient $G\backslash \mc B^0$ is always finite. If we further assume $\Gamma$ to be cocompact in $G$, then the related simplicial complex $[X]$ is finite.

In the cocompact case, we  give a more practical assumption of $[X]$ being a quotient simplicial complex. One such assumption is: 

\begin{assm}\label{a:GammaXsimplicialw}
For any nontrivial $\gamma \in \Gamma$ and any vertex $v$, the distance between $\gamma v$ and $v$ in the $1$-skeleton of $\mc B(G)$ is greater than $2$. 

\end{assm}

In particular, for any facet $F$, we have $\gamma \bar F \cap \bar F = \emptyset$, and the assumptions of Proposition \ref{prop:GammaonX} are satisfied.

\begin{rmk}

 Assume that $\Gamma$ acts freely and preserves types of vertices---these are natural and mild assumptions.  
Assume there exists a vertex $v$ and $1\neq \gamma\in \Gamma$ such that $\gamma v$ and $v$ are at distance less than $3$. First note that the distance can only be 2, otherwise $\gamma v$ and $v$ are either the same or of the same type. In the distance-2 case we may find another vertex $v_0$ adjacent to both $v$ and $\gamma v$. If we have $\{\Gamma v_0,\Gamma v\}=\{\Gamma v_0,\Gamma \gamma v\}$ implying that $\Gamma\{v_0,v\}=\Gamma\{v_0,\gamma v\}$, then either $v$ and $\gamma' v$ are adjacent for some $\gamma'\in\Gamma$, or $v_0$ is fixed by an element of $\Gamma$, which is impossible. Thus this assumption is, in some sense, necessary.
 
\end{rmk}

\begin{rmk}
Using the formalism of simplicial sets instead of simplicial complexes would allow us to define quotients much more easily and in all cases. We do not do this for consistency with the previous literature on Ramanujan complexes. 
\end{rmk}

Theoretically such $\Gamma$ can always be found. Since in the cocompact case, there are only finitely many $\Gamma$-orbits of vertices, for each representative $v_i$ we can simply find a related normal sublattice $\Gamma_i$ of $\Gamma$ such that vertices in $\Gamma_i v_i$ are at distance greater than 2. Then the sublattice $\cap_i\Gamma_i$  satisfies the assumption. 
In practice, the above assumption can be verified directly for a given class of cocompact lattices defined by congruence subgroups (for instance, $\Lambda_{v_0}$ in Assumption \ref{a:main}) for sufficiently large level.

\subsection{Ramanujan Complexes}

Ramanujan complexes are particular finite simplicial complexes that generalize the well-established notion of Ramanujan graphs (\emph{cf.} \cite[Definition 4.5.6]{Lub94}) to higher dimensions.

Following \cite{Lub18}, we consider Ramanujan complexes arising as quotients of Bruhat--Tits buildings. Let $\Gamma\subset G$ and $\mc B$ be as above. In particular, the quotient $[X]:=\Gamma\backslash \mc B$ is a finite simplicial complex. Let $I$ be an Iwahori subgroup of $G$, which in particular fixes a chamber in $\mc B$.

Associated to $\Gamma$ is the space of functions $L^2(\Gamma \bs G)^{I} = \mc C^\infty(\Gamma \bs G)^{I}$ where $I$ is an Iwahori subgroup. This is a module for the Iwahori Hecke algebra
\[
\ms H := \ms H(G, I) := \mc C_c^\infty(I \bs G / I).
\]
The action of $\ms H$ on this function space encodes the information of many interesting combinatorial adjacency and boundary operators on various spaces of functions on the simplices of $\mc B$. For example, if $G$ is semisimple and simply connected, then $\mc C^\infty(\Gamma \bs G)^{I}$ is the set of functions on the chambers of $\Gamma \bs \mc B$. 

Representations of $G$ with an $I$-fixed vector correspond bijectively to irreducible $\ms H$-modules through $\pi \mapsto \pi^{I}$. Let $\check G^{I, \temp}$ be the set of such modules corresponding to tempered $\pi$. An irreducible $\ms H$-module $\pi^{I}$ is weakly contained in $L^2(G)^{I}$ if and only if $\pi \in \check G^{I, \temp}$, motivating:

\begin{dfn}[{\cite[Definition 2.7]{Lub18}}]\label{def ramanujancomplex}
For $\Gamma \subseteq G$ as above, we say that $[X]=\Gamma \bs \mc B$ is a $\emph{Ramanujan complex}$ if for all irreducible $\ms H$-modules $\pi^{I} \subseteq \mc C^\infty(\Gamma \bs G)^{I}$, we have that either $\pi^{I} \in \check G^{I, \temp}$ or  $\pi^{I}$ comes from a $1$-dimensional representation $\pi$ of $G$. 
\end{dfn}

In other words, the spectra of these various combinatorial operators on $\Gamma \bs \mc B$, up to exceptions coming from constant functions, are contained in that of those operators on $\mc B$ itself. This should be seen as a higher-dimensional generalization of the notion of a $d$-regular Ramanujan graph that, except for the constant function, the adjacency operator has the spectrum contained in that of the adjacency operator for the infinite regular tree. 

\begin{rmk}

In \cite{Fir16}, Uriya First proposed an alternative definition of Ramanujan complexes $\Gamma\backslash X$ (with $X$ being a general $G$-complex) using the spectrum of a collection of related operators. We refer to \emph{ibid.}, especially \cite[Theorem 6.22]{Fir16} for the relationship between this definition and Definition \ref{def ramanujancomplex}.

\end{rmk}

\begin{rmk}
 In the classical setting where $\F=\Q_p$ and $G=\PGL_2(\Q_p)$, the set of vertices $[X]^0$ and edges $[X]^1$ of the finite $(p+1)$-regular graph $[X]=\Gamma \bs \mc B$ are given by $\Gamma \bs G / K$ and $\Gamma \bs G / I$ respectively, where $K$ is a maximal compact subgroup of $G$. In \cite[Theorem 5.5.1]{Lub94}, it is shown that $\lambda$ is an eigenvalue of the adjacency matrix $\delta$ as an operator acting on $L^2(\Gamma \bs G / K)=L^2(\Gamma \bs G)^{K}$ if and only if the related unramified representation $\rho^{\lambda}$ occurs in $L^2(\Gamma \bs G)$. Proving that $\Gamma \bs \mc B$ is Ramanujan is equivalent to showing that  all unramified representations occurring in $L^2(\Gamma \bs G / K)$ are either principal series or one-dimensional (\cite[Theorem 5.4.3]{Lub94}). Equivalently, we need to show that those spherical representations occurring in $L^2(\Gamma \bs G)$ are either tempered or one-dimensional.
 This justifies Definition \ref{def ramanujancomplex}.   
\end{rmk}

\section{Abstract Construction of Complexes}

\subsection{Relation to Automorphic Spectrum}

Let $F$ be a number field, and let $\infty$ denote the set of Archimedean places of $F$.  
Let $G$ be a connected reductive group over $F$ such that $G_v$ is compact for all $v \in \infty$.  
Fix a finite place $v_0$.

Let $K$ be a compact open subgroup of $G(\A_F)$. For the related compact open subgroup $K^{\infty, v_0} \subseteq G^{\infty, v_0}$, define
\[
\Gamma_{v_0} := G(F) \cap K^{\infty, v_0}.
\]
This gives an embedding 
\begin{equation}\label{eq: adelicembedding}
\Gamma_{v_0} \backslash G_{v_0} / I_{v_0} \hookrightarrow 
G(F) \backslash G(\A_F) / K^{\infty, v_0} I_{v_0} G_\infty,
\end{equation}
where $I_{v_0}$ is a fixed Iwahori subgroup of $G_{v_0}$  
By \cite[Theorem 2.6.1]{GH24}, the adelic quotient
\[
G(F) \backslash G(\A_F) / K^{\infty, v_0} I_{v_0} G_\infty
\]
is finite, and therefore so is $\Gamma_{v_0} \backslash G_{v_0} / I_{v_0}$.

Let $\mathcal B(G_{v_0})$ be the reduced Bruhat--Tits building of $G_{v_0}$, viewed as a polysimplicial complex. 
Define the quotient complex $\Gamma_{v_0} \backslash \mathcal B(G_{v_0})$ as follows:
\begin{itemize}
    \item Its $0$-simplices are the $\Gamma_{v_0}$-orbits of $0$-simplices of $\mathcal B(G_{v_0})$;
    \item Its $k$-simplices are the $\Gamma_{v_0}$-orbits of $k$-simplices of $\mathcal B(G_{v_0})$, i.e.\ distinct simplices $\{[x_0], [x_1],\dots,[x_k]\}$ represented by equivalence classes of $k$-simplices $\{x_0, x_1,\dots,x_k\}$ in $\mathcal B(G_{v_0})$.  
    (Some of these may be degenerate.)
\end{itemize}
This complex is finite. Indeed, let $C$ be a fixed chamber in $\mathcal B(G_{v_0})$. Then the map
\[
\Gamma_{v_0} \backslash G_{v_0} / I_{v_0} \longrightarrow 
\Gamma_{v_0} \backslash \{\text{chambers of }\mathcal B(G_{v_0})\},
\qquad g \longmapsto g \cdot C,
\]
is surjective. Hence the quotient has finitely many chambers and, consequently, finitely many $k$-simplices for all $k$.

As explained in the previous section, in general the quotient complex need not inherit a simplicial structure from $\mathcal B(G_{v_0})$.  
It does inherit one if the assumptions in Proposition~\ref{prop:GammaonX} hold, and in particular if Assumption~\ref{a:GammaXsimplicialw} is satisfied.

\begin{dfn}\label{def:complex}
Denote the simplicial complex defined above by
\[
X(K^{\infty, v_0}) := X_G(K^{\infty, v_0}),
\]
and set
\[
\mathcal C^\infty(X_G(K^{\infty, v_0})) := \mathcal C^\infty(\Gamma_{v_0} \backslash G_{v_0})^{I_{v_0}}.
\]
\end{dfn}
This may be viewed roughly as the space of functions on the top-dimensional simplices of $X(K^{\infty, v_0})$, although the description is slightly more subtle when $G$ contains elements acting on a top-dimensional simplex by a nontrivial automorphism.

The key property we will use is that the embedding in \eqref{eq: adelicembedding} induces
\[
\mathcal C^\infty(X_G(K^{\infty, v_0})) \hookrightarrow 
L^2(G(F)\backslash G(\mathbb A_F))^{K^{\infty, v_0} I_{v_0} G_\infty}.
\]
As $\ms H(G_{v_0},I_{v_0})$-modules, this can be expressed as an injection
\[
\mathcal C^\infty(X_G(K^{\infty, v_0})) \hookrightarrow
\bigoplus_{\substack{\pi \in \mathcal{AR}_\mathrm{disc}(G) \\ \pi_\infty \ \mathrm{trivial}}}
(\pi_{v_0})^{I_{v_0}} \otimes (\pi^{\infty, v_0})^{K^{\infty, v_0}},
\]
where $\mathcal{AR}_\mathrm{disc}(G)$ denotes the multiset of automorphic representations in the discrete spectrum of $G$, and the Hecke action is on the first tensor factor.  
Applying Definition~\ref{def ramanujancomplex} to this decomposition yields:

\begin{prop}\label{prop ramancond}
The complex $X_G(K^{\infty, v_0})$ is Ramanujan 
if and only if for every 
$\pi \in \mathcal{AR}_\mathrm{disc}(G)$ such that $\pi_\infty$ is trivial and 
$(\pi^{\infty, v_0})^{K^{\infty, v_0}} \neq 0$, the representation $\pi_{v_0}$ is either tempered or a one-dimensional representation (i.e. a character).
\end{prop}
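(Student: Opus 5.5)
The plan is to make the adelic embedding \eqref{eq: adelicembedding} into an identification of $\ms H(G_{v_0},I_{v_0})$-modules, and then read Definition~\ref{def ramanujancomplex} off summand by summand. The argument has three steps.

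\textbf{Step 1: the embedding is an orbit decomposition.} Write $Y := G(F)\backslash G(\A_F)/K^{\infty,v_0}G_\infty$. Since $G_\infty$ is compact, $L^2$ and smooth vectors agree on this space. Since $G$ is anisotropic modulo center (the definite case), $G(F)\backslash G(\A_F)$ is compact, so the whole $L^2$ space is discrete. By the finiteness result \cite[Theorem 2.6.1]{GH24}, $G(F)\backslash G(\A^{v_0})/K^{\infty,v_0}G_\infty$ is finite, with representatives $g_1=1,g_2,\dots,g_h$. This gives a decomposition of $G_{v_0}$-spaces
\[
Y \cong \bigsqcup_{i=1}^h \Gamma_i\backslash G_{v_0}, \qquad \Gamma_i := G(F)\cap g_iK^{\infty,v_0}G_\infty g_i^{-1},
\]
with $\Gamma_1=\Gamma_{v_0}$ (the $G_\infty$ factor is harmless, as in the definition of $\Gamma_{v_0}$). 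Taking $I_{v_0}$-invariants shows that $\mathcal C^\infty(X_G(K^{\infty,v_0}))$ is an $\ms H$-stable direct summand of
\[
L^2(G(F)\backslash G(\A_F))^{K^{\infty,v_0}I_{v_0}G_\infty}.
\]
The action of $G_{v_0}$ by right translation commutes with the $K^{\infty,v_0}G_\infty$-invariants, so this is an $\ms H$-module statement.

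\textbf{Step 2: decompose the adelic side.} Decompose $L^2_{\mathrm{disc}}$ as $\bigoplus_{\pi\in\mathcal{AR}_\mathrm{disc}(G)}\pi$, counted with multiplicity. Taking $K^{\infty,v_0}I_{v_0}G_\infty$-invariants gives
\[
\bigoplus_{\pi}(\pi_\infty)^{G_\infty}\otimes(\pi_{v_0})^{I_{v_0}}\otimes(\pi^{\infty,v_0})^{K^{\infty,v_0}}.
\]
Here $(\pi_\infty)^{G_\infty}\neq0$ exactly when $\pi_\infty$ is trivial. The Hecke algebra $\ms H$ acts on the middle factor, so every irreducible $\ms H$-submodule of the right side is isomorphic to some $\pi_{v_0}^{I_{v_0}}$ with $\pi$ as in the proposition. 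This uses that $\pi_{v_0}\mapsto\pi_{v_0}^{I_{v_0}}$ is irreducible or zero.

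\textbf{Step 3: the two directions.} The "if" direction is immediate, since any irreducible $\ms H$-submodule of $\mathcal C^\infty(X_G(K^{\infty,v_0}))$ is such a $\pi_{v_0}^{I_{v_0}}$.

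For "only if", take $\pi$ with trivial $\pi_\infty$, nonzero $(\pi^{\infty,v_0})^{K^{\infty,v_0}}$ and nonzero $\pi_{v_0}^{I_{v_0}}$. (If $\pi_{v_0}^{I_{v_0}}=0$, the condition is irrelevant to the complex, so the equivalence should be read with this implicit restriction.) The copy of $\pi_{v_0}^{I_{v_0}}$ attached to $\pi$ is supported on some orbit $\Gamma_i\backslash G_{v_0}$, which need not be the orbit $i=1$. I would handle this in one of two ways. First, I would replace $\Gamma_{v_0}$ by all the $\Gamma_i$, noting that each $\Gamma_i$ is again of the form $G(F)\cap K'$ for a $G^{\infty,v_0}$-conjugate $K'=g_iK^{\infty,v_0}g_i^{-1}$. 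Translating the $\pi^{\infty,v_0}$-vector by $g_i$ moves the component to the identity orbit. Second, I would restrict the $\pi$-isotypic function to a nonzero orbit component; the Hecke action preserves each orbit, so that component is a nonzero $\ms H$-quotient of a sum of copies of $\pi_{v_0}^{I_{v_0}}$.

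\textbf{Main obstacle.} The step I expect to be hardest is exactly this mismatch between the single orbit $\Gamma_{v_0}\backslash G_{v_0}$ and the full adelic quotient. I would first try the conjugation trick above. If it only shows up to conjugating $K^{\infty,v_0}$, I would state the proposition for the disjoint union of the complexes indexed by $i$, which is clearly what the paper's uses require. In the class-number-one setting the issue disappears, since then $h=1$.
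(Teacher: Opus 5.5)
Your ``if'' direction is the paper's argument. The paper only records that \eqref{eq: adelicembedding} gives an $\ms H(G_{v_0},I_{v_0})$-injection of $\mathcal C^\infty(X_G(K^{\infty,v_0}))$ into $\bigoplus_{\pi_\infty\ \mathrm{trivial}}\pi_{v_0}^{I_{v_0}}\otimes(\pi^{\infty,v_0})^{K^{\infty,v_0}}$, and then reads off Definition~\ref{def ramanujancomplex}. Your orbit decomposition, which upgrades this injection to a direct summand, is a harmless refinement. An injection only yields ``if'', and the paper gives no separate argument for ``only if''. Only ``if'' is used in Theorem~\ref{thm:abstractramanujan}. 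So your two caveats are legitimate points that the paper glosses over. First, representations with $\pi_{v_0}^{I_{v_0}}=0$ are invisible to the complex, so the converse must be read with that restriction. Second, $\mathcal C^\infty(X_G(K^{\infty,v_0}))$ only sees the identity $G_{v_0}$-orbit of the adelic quotient.

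On the orbit issue, your closing claim that class number one gives $h=1$ is not right as stated. In the paper, class number one is imposed on $K_0^\infty$, not on $K^{\infty,v_0}K_{v_0}$. Under Assumption~\ref{a:main}, the $G_{v_0}$-orbits on $G(F)\backslash G(\A_F)/K^{\infty,v_0}G_\infty$ are indexed by $\Lambda_{0,v_0}\backslash K_0^{\infty,v_0}/K^{\infty,v_0}$. This set is a point only if $\Lambda_{0,v_0}$ surjects onto $K_0^{\infty,v_0}/K^{\infty,v_0}$, which is a strong-approximation statement that class number one of $K_0^\infty$ does not provide.

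What does settle the converse in that setting is your first idea, once you notice that the representatives $g_i$ can be chosen in $K_0^{\infty,v_0}$, which normalizes $K^{\infty,v_0}$. Then $\Gamma_i=\Gamma_{v_0}$ for all $i$. Let $\phi$ be a $K^{\infty,v_0}I_{v_0}G_\infty$-fixed vector in an automorphic realization of $\pi$ that is nonzero on the $i$-th orbit. Then $R(g_i)\phi$ is again $K^{\infty,v_0}I_{v_0}G_\infty$-fixed and lies in the same $\pi$. Its restriction to the identity orbit is $h\mapsto\phi(g_ih)$, which is nonzero.

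Restriction to the identity orbit is $G_{v_0}$-equivariant on $\pi^{K^{\infty,v_0}G_\infty}\cong\pi_{v_0}^{\oplus m}$. Its image is therefore a nonzero sum of copies of $\pi_{v_0}$, so $\pi_{v_0}^{I_{v_0}}$ embeds into $\mathcal C^\infty(X_G(K^{\infty,v_0}))$. Hence under Assumption~\ref{a:main} the converse holds for every $\pi$ with $\pi_{v_0}^{I_{v_0}}\neq0$, with no need to pass to a disjoint union. Without normalizing representatives, you only get the statement for the complexes attached to the conjugates $g_iK^{\infty,v_0}g_i^{-1}$, as you say.
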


\subsection{Super-Definite Unitary Groups}

In \cite{Clo93Coho}, Clozel studies special inner forms of unitary groups whose associated arithmetic locally symmetric spaces have cohomology essentially concentrated in middle degree.  
An analogous phenomenon was used in the case of forms of $\GL_n$ over function fields to construct Ramanujan complexes in \cite{LSV05a}.

The construction in \cite{LSV05a} relies on the following two properties:

\begin{assm}\label{assm sdef}
The group $G$ satisfies:
\begin{enumerate}
    \item $G_v$ is compact for all $v \mid \infty$;
    \item there exists a finite place $v_a$ such that $G_{v_a}$ is anisotropic modulo its center.  
    Such a place is called an \emph{anisotropic place} of $G$.
\end{enumerate}
\end{assm}

The second condition is more restrictive than it may appear:

\begin{thm}[{\cite[Theorem 10.3.1]{KP23}}]
The only absolutely almost simple groups $G_{v_a}$ over non-Archimedean local fields that are anisotropic are of type~$A$.  
More precisely, $G_{v_a}/Z_{G_{v_a}}$ is isogenous to $D_{v_a}^\times / F_{v_a}^\times$ for some division algebra $D_{v_a}$ over $F_{v_a}$.
\end{thm}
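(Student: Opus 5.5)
The statement is a classification result cited from \cite[Theorem 10.3.1]{KP23}. The plan is to reduce it to Tits' classification of anisotropic absolutely almost simple groups over non-Archimedean local fields, by way of the index and Galois cohomology. First, we may pass to the simply connected cover $\tilde G$ of $G_{v_a}$, since anisotropy is invariant under central isogeny and under passage to inner forms only through the index. Anisotropy of $\tilde G$ means its Tits index has no distinguished orbits. By the Kneser--Bruhat--Tits theorem, $H^1(\F,\tilde G)=1$ for simply connected $\tilde G$ over a non-Archimedean local field $\F$. Consequently, for any isotropic quasi-split form $G^*$, inner forms of $G^*$ are classified by $H^1(\F, G^*_{\mathrm{ad}})$, which injects via the connecting map into $H^2(\F, Z(G^*))$.

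Second, I would treat the types case by case. For each absolutely almost simple type other than $A$, the centre $Z$ is small: trivial for $E_8,F_4,G_2$; $\mu_2$ or $\mu_3$ for $E_7,E_6,B,C$; and order $4$ for $D$. The Tate--Nakayama pairing then bounds the number of inner forms of the quasi-split form. One exhibits enough explicit isotropic forms to exhaust the classes. For example, for $C_n$ the two classes are $\mathrm{Sp}_{2n}$ and the quaternionic unitary group, which is isotropic for $n\ge2$. For $B_n$, $\mathrm{Spin}$ of a quadratic form of dimension at least $5$ is isotropic, since every such form over a $p$-adic field is isotropic. For $D_n$ with $n\ge 4$ one argues similarly, including the quaternionic skew-hermitian forms. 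The trialitarian and exceptional cases follow from Tits' tables, or from the Bruhat--Tits fact that a group is anisotropic if and only if its affine Dynkin diagram, with the Galois action, has a single orbit of vertices. This last criterion is the cleanest uniform route: for an anisotropic group, the building is a single point, so the affine local Dynkin diagram with the $\Gal$-action is one orbit. Inspecting the local Dynkin diagrams in \cite{Tit79}, only $\tilde A_{n}$ with a cyclic rotation action of order $n+1$ admits this.

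Third, in type $A$, outer forms (${}^2A$) always contain a hermitian form, which is isotropic in dimension at least $3$ over a $p$-adic field. The residual rank-one outer cases similarly fail the single-orbit criterion. So the anisotropic group is an inner form $\SL_1(D_{v_a})$ with $D_{v_a}$ a central division algebra, and the isogeny statement for $G_{v_a}/Z_{G_{v_a}}$ with $D_{v_a}^\times/F_{v_a}^\times$ follows by passing to adjoint groups, where $\PGL_1(D_{v_a})$ is the adjoint group.

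The main obstacle is making the single-orbit criterion rigorous. Specifically, one must verify that the Galois (Frobenius) action on the affine Dynkin diagram can be transitive only for $\tilde A_n$ with a rotation, and must handle the ramified residue-field subtleties in \cite{Tit79}. I would rely on the tables there rather than redo the Galois-cohomology count type by type.
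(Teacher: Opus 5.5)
The paper gives no argument of its own here (it simply quotes \cite[Theorem 10.3.1]{KP23}). Your main route is the standard Bruhat--Tits argument for this result, and your proposal is correct in substance: anisotropy makes $\mathcal B(G,\F)$ a point, so Frobenius acts transitively on the local Dynkin diagram, which by Tits' tables forces $\tilde A_n$ with a rotation and hence an inner form $\SL_1(D)$. You should, however, state the criterion for the local Dynkin diagram of $G_K$, $K=\widehat{\F^{\mathrm{ur}}}$, with its Frobenius action, rather than for a Galois action on an absolute affine diagram; its inputs are quasi-splitness of $G_K$, unramified descent $\mathcal B(G,\F)=\mathcal B(G,K)^{\mathrm{Frob}}$, and Lang's theorem over the finite residue field to produce a Frobenius-stable chamber, after which the cohomological case analysis is superfluous. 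Also, your assertion that outer type-$A$ forms ``always contain a hermitian form'' needs the local fact that a central division algebra over a quadratic extension $E/\F$ carrying an involution of the second kind is $E$ itself, because $\mathrm{cor}\colon\mathrm{Br}(E)\to\mathrm{Br}(\F)$ is injective.
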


We will use the fact that an anisotropic place admits a large open compact subgroup:

\begin{lem}\label{lem coabelian}
Let $G_{v_a}$ be anisotropic modulo its center.  
Then there exists an open compact subgroup $K_0 \subseteq G_{v_a}$ such that $G_{v_a}/K_0$ is abelian.
\end{lem}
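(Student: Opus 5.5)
Take $K_0$ to be a maximal open compact subgroup containing the unique "unit group"; more precisely, the plan is to show that the derived subgroup $[G_{v_a},G_{v_a}]$ is compact, and then set $K_0 := \overline{[G_{v_a},G_{v_a}]}\cdot K_1$ for any open compact subgroup $K_1$ normalized by everything, or more simply take $K_0$ to be a compact open subgroup containing the closure of the commutator subgroup. Any subgroup containing the commutator subgroup is normal, with abelian quotient.

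\textbf{Compactness of the commutator subgroup.} By the theorem quoted from \cite[Theorem 10.3.1]{KP23}, $G_{v_a}/Z_{G_{v_a}}$ is isogenous to $D^\times_{v_a}/F_{v_a}^\times$ for a division algebra $D_{v_a}$. For $G' := D_{v_a}^\times$ the statement is classical: the reduced norm $\mathrm{Nrd}\colon D_{v_a}^\times\to F_{v_a}^\times$ has compact kernel $\SL_1(D_{v_a})\subseteq \mc O_{D_{v_a}}^\times$ (the valuation extends uniquely to $D_{v_a}$), and commutators lie in this kernel. In general I would argue structurally: the derived group $G^{\der}_{v_a}$ is a semisimple group which is anisotropic over $F_{v_a}$ (anisotropy modulo center passes to the derived group, since a split torus in $G^\der$ would be a split torus in $G$ meeting the center only finitely). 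By the Bruhat--Tits/Prasad criterion, a semisimple group over a non-Archimedean local field is anisotropic iff its group of rational points is compact. Hence $G^\der(F_{v_a})$ is compact, and $[G_{v_a},G_{v_a}]\subseteq G^\der(F_{v_a})$.

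\textbf{Constructing $K_0$.} Let $K_1$ be any open compact subgroup of $G_{v_a}$. Since $H:=G^\der(F_{v_a})$ is compact and normal, $K_0:=H K_1$ is a compact subgroup (product of a compact normal subgroup and a compact subgroup), open because it contains $K_1$. It contains the commutator subgroup, so it is normal and $G_{v_a}/K_0$ is a quotient of the abelian group $G_{v_a}/[G_{v_a},G_{v_a}]$, hence abelian.

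The only step needing care is the compactness of $G^\der(F_{v_a})$, i.e. invoking the criterion "anisotropic $\Leftrightarrow$ compact rational points" for semisimple groups and checking that anisotropy modulo center transfers to the derived group; if one prefers to avoid it, one can instead use the isogeny description and the explicit reduced-norm argument for $D^\times_{v_a}$, pulling back along the isogeny (finite kernels and images of compact sets under continuous maps preserve compactness).
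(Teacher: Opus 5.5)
Your proof is correct, but it follows a different route from the paper's.

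\textbf{The paper's route.} The paper takes the canonical subgroup $K_0 := G^1_{v_a}$, the common kernel of $g \mapsto |\chi(g)|_{v_a}$ over all $F_{v_a}$-rational characters $\chi$ of $G$. This subgroup is automatically open, being the kernel of a continuous map to a discrete group. It contains $G^{\der}_{v_a}$, since characters kill the derived group, so the quotient is abelian. Compactness is justified only briefly, by appeal to the classification \cite[Theorem 10.3.1]{KP23} and a comparison with the norm-one group of a division algebra. That comparison is loose: for $G = D^\times$ one has $G^1 = \mc O_D^\times$, which is strictly larger than $\SL_1(D)$.

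\textbf{Your route.} You avoid characters and the type-$A$ classification entirely. You deduce from anisotropy modulo the center that $G^{\der}$ is anisotropic. The Bruhat--Tits--Rousseau (Prasad) criterion then gives that $G^{\der}(F_{v_a})$ is compact. You enlarge this compact normal subgroup by an arbitrary compact open $K_1$, and $H K_1 \supseteq [G_{v_a},G_{v_a}]$ gives normality and the abelian quotient for free.

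\textbf{What each buys.} Your argument works uniformly for any reductive group anisotropic modulo its center. Openness, compactness and commutativity of the quotient are all fully transparent in it. The paper's choice is canonical: $G^1_{v_a}$ is in fact the unique maximal compact subgroup of $G_{v_a}$, since every compact subgroup has trivial $|\chi|$. Your $K_0$ is always contained in $G^1_{v_a}$ and depends on the auxiliary $K_1$.

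\textbf{A small cleanup.} The opening sentences of your proposal can be dropped: the ``maximal open compact subgroup containing the unit group'' and the $K_1$ ``normalized by everything''. As your later paragraph shows, $H K_1$ is a compact open subgroup for \emph{every} compact open $K_1$.
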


\begin{proof}
Let $G_{v_a}^1$ be the intersection of the kernels of $g \mapsto |\chi(g)|_{v_a}$ as $\chi$ ranges over all $F_{v_a}$-rational characters of $G$.  
Then $G_{v_a}^1$ contains $G_{v_a}^\mathrm{der}$, hence $G_{v_a}/G_{v_a}^1$ is abelian.  
Moreover, $G_{v_a}^1$ is isogenous to the norm-$1$ subgroup of a division algebra and is therefore compact.  
Thus, $K_0 := G_{v_a}^1$ satisfies the claim.
\end{proof}

Groups defined over number fields and satisfying the properties in Assumption~\ref{assm sdef} indeed exist:

\begin{prop}[{\cite[\S0.3.3]{KMSW14}}]\label{prop sdefexistence}
Let $F$ be a totally real number field and $E/F$ a totally complex quadratic extension.  
Let $G^* := U^{E/F}_N$ be the associated quasisplit unitary group.  
Then $G^*$ has an extended pure inner form $G$ satisfying the properties in Assumption~\ref{assm sdef}.
\end{prop}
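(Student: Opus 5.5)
The plan is to classify extended pure inner forms of $G^*=U^{E/F}_N$ by Kottwitz's local-global description and to prescribe the local invariants place by place. By Kottwitz, isomorphism classes of extended pure inner forms at a place $v$ of $F$ are parametrized by $B(F_v,G^*)_{\mathrm{bas}}$, which maps onto $X^*(Z(\hat G^*)^{\Gamma_v})$. The global classes are then cut out by a single reciprocity (sum-zero) condition on the collection of local invariants. Concretely, for $U_N$ the relevant local character groups are as follows.
\begin{itemize}
\item At an inert or ramified finite place, or a real place, the local group is $\Z$. At a real place the invariant records the signature $(p_v,q_v)$, and the compact forms correspond to signature $(N,0)$ or $(0,N)$.
\item At a split finite place $v$, where $G^*_v\cong \GL_N(F_v)$, the local group is $\Z$. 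Its image in $\Z/N$ records the Brauer invariant of the central simple algebra $D_v$ with $G_v\cong D_v^\times$.
\end{itemize}
The global condition (as recorded in \cite[\S0.3.3]{KMSW14}) is that these invariants sum to zero in a suitable quotient. Equivalently, and perhaps more transparently, one can work in the classical language: construct a central simple algebra $D$ over $E$ of degree $N$ with an involution $\iota$ of the second kind and take $G=U(D,\iota)$.

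In the classical route, I first choose a finite place $v_a$ of $F$ that splits in $E$ as $w\bar w$. I prescribe $\mathrm{inv}_w(D)=1/N$ and $\mathrm{inv}_{\bar w}(D)=-1/N$. If needed to make the local invariants sum to zero, and to keep the parity conditions of the next step, I also use a second split place $v_b$ with the opposite invariants. All other places get invariant zero. By Albert--Brauer--Hasse--Noether there is a central division algebra $D/E$ of degree $N$ with these invariants. The condition $\mathrm{inv}_{\bar w}=-\mathrm{inv}_w$ at split places, and zero elsewhere, is exactly the Albert--Riehm--Scharlau criterion for $D$ to admit an involution of the second kind relative to $E/F$. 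Then $G_{v_a}\cong D_w^\times$, which is anisotropic modulo its center since $D_w$ is a division algebra, so condition (2) of Assumption~\ref{assm sdef} holds.

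The key remaining step, and the main obstacle, is to get compactness at every real place simultaneously. Involutions of the second kind on $D$ are of the form $x\mapsto h^{-1}\iota_0(x)h$ with $h$ an $\iota_0$-hermitian unit. At each real place $v$, $D_v\cong M_N(\C)$ and the involution corresponds to a hermitian form of some signature. I would invoke the Landherr/Kneser--Scharlau local-global principle for hermitian forms over $(D,\iota)$. This allows any prescribed signatures at the real places, subject only to a discriminant/sign compatibility, which is automatically satisfiable when $N$ is odd. When $N$ is even, I would handle the constraint by adjusting the parity using the auxiliary split place, or equivalently by using the extended pure inner form freedom. This is precisely where working with extended pure inner forms, rather than pure inner forms, gives the extra flexibility. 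I would then choose definite signatures $(N,0)$ at all real places, so that $G_v=U(N)$ is compact, and check that the resulting Kottwitz invariants satisfy the global sum condition of \cite{KMSW14}.
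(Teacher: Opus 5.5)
Your plan of prescribing local behavior and then globalizing is the right one; the paper gives no argument beyond citing \cite[\S0.3.3]{KMSW14}. But your version has a genuine gap at exactly the step that carries the content, because the local invariants are misidentified. $\Gal(E/F)$ acts on $Z(\hat G^*)=\mathbb{C}^\times$ by inversion, so $Z(\hat G^*)^{\Gamma_v}=\{\pm1\}$ at every non-split place. Concretely:
\begin{itemize}
\item at inert or ramified finite $v$, $B(F_v,G^*)_{\mathrm{bas}}=H^1(F_v,G^*)\cong\Z/2$ (discriminants of hermitian forms);
\item at a real place, $B(\R,G^*)_{\mathrm{bas}}=H^1(\R,G^*)$ is the finite set of signatures $(p,q)$, mapping to $\Z/2$ by the parity of $q-q_0$, where $(p_0,q_0)$ is the quasi-split signature;
\item only split places give $\Z$.
\end{itemize}
The global condition is that the images sum to $0$ in $X^*(Z(\hat G^*)^{\Gamma})=\Z/2$, with a split place contributing $\kappa_v \bmod 2$. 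With this, the proof is short. Take $\kappa_{v_a}=1$ at a split place, so $G_{v_a}$ is the unit group of a division algebra of invariant $\pm1/N$. Take a definite signature at each real place and the trivial class almost everywhere else, and fix the parity at one auxiliary place. Then apply Kottwitz's exact sequence $B(F,G^*)_{\mathrm{bas}}\to\bigoplus_v B(F_v,G^*)_{\mathrm{bas}}\to\Z/2$. For $N$ even the auxiliary place can be unavoidable: $\kappa_{v_a}$ must be prime to $N$, hence odd, and $(N,0)$, $(0,N)$ have the same parity. Either the non-trivial class at a non-split finite place or your second split place with odd $\kappa$ does the job.

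Your classical route fights an obstruction that is not there and skips the one that is. For fixed $D$ the real signatures are unconstrained. The $\iota_0$-symmetric elements of $D$ form an $F$-vector space, and at each real place the $h$ for which $x\mapsto h^{-1}\iota_0(x)h$ is adjoint to a definite hermitian form constitute an open cone. Weak approximation therefore gives a single $h$ that works at all real places at once. Landherr's discriminant condition only matters if you also prescribe the finite non-split places, and split places never enter it, since every element is a norm there. What this route does not give is an \emph{extended pure} inner form. A priori $U(D,\iota)$ is only an inner form, and you still need local lifts of its local classes to $B(F_v,G^*)_{\mathrm{bas}}$ whose images sum to $0$ in $\Z/2$. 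For $N$ odd this is free (shift $\kappa_{v_a}$ by $N$). For $N$ even every local parity is forced by $G$ itself, so you need a reciprocity argument, e.g.\ via the discriminant algebra of $(D,\iota)$ and reciprocity in $\mathrm{Br}(F)$. Your closing ``check that the resulting Kottwitz invariants satisfy the global sum condition'' is exactly this step, and it is left undone. Relatedly, ``extended pure'' buys no parity freedom when $N$ is even. Its real role is that pure inner forms are $\GL_N$ at every split place (as $H^1(F_v,\GL_N)=1$), while for $N\geq 3$ no non-split place can be anisotropic modulo the center. Without it, condition (2) of Assumption~\ref{assm sdef} cannot be met.
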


\begin{dfn}
We call the groups constructed in Proposition \ref{prop sdefexistence}   \emph{super-definite unitary groups}.
\end{dfn}

\subsection{Complexes from Unitary Groups}

We can now abstractly construct our Ramanujan complexes using the same methods as in \cite{LSV05b} for general linear groups over function fields.

\begin{thm}\label{thm:abstractramanujan}
Let $E/F$ be a CM extension of number fields and let $G$ be a super-definite unitary group that is an extended inner form of the quasi-split unitary group $G^* = U^{E/F}(N)$. Fix a finite place $v_0$ and a compact open subgroup $K^{\infty, v_0} \subseteq G^{\infty, v_0}$. Assume that one of the following holds:
\begin{enumerate}
    \item $N$ is prime;
    \item there exists an anisotropic place $v_a$ of $G$ such that $G_{v_a}/K_{v_a}$ is abelian. 
\end{enumerate}
Then $X_G(K^{\infty, v_0})$ is a Ramanujan complex.
\end{thm}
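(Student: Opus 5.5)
By Proposition~\ref{prop ramancond}, it suffices to take $\pi \in \mathcal{AR}_{\mathrm{disc}}(G)$ with $\pi_\infty$ trivial and $(\pi^{\infty,v_0})^{K^{\infty,v_0}} \neq 0$, and show that $\pi_{v_0}$ is either tempered or a character. The plan is to use the endoscopic classification of \cite{KMSW14} (built on \cite{Mok15}) to attach to $\pi$ a global Arthur parameter
\[
\psi = \boxplus_i \, \tau_i \boxtimes \nu_{d_i},
\]
where each $\tau_i$ is a conjugate self-dual cuspidal representation of $\GL_{m_i}(\A_E)$, $\nu_{d_i}$ is the $d_i$-dimensional representation of $\SL_2$, and $\sum m_i d_i = N$. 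The proof then splits into two steps: first show that $\psi$ is either generic (all $d_i=1$) or one-dimensional ($\psi = \chi \boxtimes \nu_N$), and then treat each case separately.

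\emph{Step 1: $\psi$ is generic or one-dimensional.} We use the anisotropic place. Assume first hypothesis~(2). The group $K_{v_a}$ acts trivially on some vector, and since $G_{v_a}/K_{v_a}$ is abelian, $\pi_{v_a}$ is a character of $G_{v_a}$. At $v_a$ the local group is (up to center) $D_{v_a}^\times$ for a division algebra of degree $N$. So the local packet $\Pi_{\psi_{v_a}}$ must contain a character, and the localization $\psi_{v_a}$ is a parameter of the inner form $D_{v_a}^\times$. We then apply the Jacquet--Langlands correspondence: characters of $D^\times$ correspond to Speh-type parameters $\chi\boxtimes\nu_N$. Hence $\psi_{v_a}$ contains a piece $\nu_N$, which forces $\psi$ globally to be $\chi\boxtimes\nu_N$ with $\chi$ a Hecke character, unless $\psi$ is generic. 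The local structure needs care here. A parameter for a division algebra must be relevant, meaning it cannot factor through a proper Levi of $\GL_N$. Relevance alone already forces $\psi_{v_a}$ to be irreducible as an $L_{E_{v_a}}\times\SL_2$-representation, so $\psi = \tau\boxtimes\nu_d$ with $md=N$ and $\tau_{v_a}$ cuspidal-irreducible. This relevance argument also handles hypothesis~(1): when $N$ is prime, either $d=1$ (generic) or $d=N$, $m=1$ (one-dimensional). Under hypothesis~(2), since $\pi_{v_a}$ is a character, $\psi_{v_a}$ must be of the form $\chi_{v_a}\boxtimes\nu_N$, so again $d=N$ or $\psi$ is generic. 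If $\psi = \chi\boxtimes\nu_N$, then the packet at $v_0$ consists of characters (or the local component is the corresponding one-dimensional representation), which is exactly the allowed exception.

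\emph{Step 2: generic $\psi$ gives tempered $\pi_{v_0}$.} When $\psi$ is generic and irreducible, $\psi=\tau$ with $\tau$ a conjugate self-dual cuspidal representation of $\GL_N(\A_E)$. Because $\pi_\infty$ is trivial, $\tau$ is regular algebraic at infinity, cohomological with the weight of the trivial representation. In this case $\tau$ is cohomological conjugate self-dual and square-integrable at $v_a$ (it is the Jacquet--Langlands transfer of an essentially discrete series), so Caraiani's theorem \cite{Car12} gives temperedness of $\tau_w$ at every finite place $w$. Then $\pi_{v_0}$ lies in the tempered local packet $\Pi_{\psi_{v_0}}$, and temperedness of the packet members follows from the local classification in \cite{KMSW14} (this also covers $v_0$ split, where $\pi_{v_0}\cong\tau_w$ up to the inner-form identification).

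I expect the main obstacle to be Step~1. It requires making precise that a discrete automorphic $\pi$ of a non-quasi-split group has a parameter whose localization at $v_a$ is relevant for $D_{v_a}^\times$. It also requires precise compatibility of the local Langlands correspondence of \cite{KMSW14} at the anisotropic place with Jacquet--Langlands, so that the statement "characters correspond to $\chi\boxtimes\nu_N$" holds. The first thing to try is to phrase this through the base change of $\pi_{v_a}$ to $\GL_N(E_{v_a})$ (split or inert as appropriate) and use known properties of the local packets for inner forms of $\GL_N$.
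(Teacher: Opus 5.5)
Your outline matches the paper's proof. Relevance of $\psi_{v_a}$ for the anisotropic group $G_{v_a}$ forces a single summand $\tau\boxtimes\nu_d$. Primality of $N$, or Badulescu's classification of the parameters whose packets on $D_{v_a}^\times$ contain a character \cite[Proposition 3.7]{Bad08}, forces $d\in\{1,N\}$. In the generic case, Caraiani's theorem together with \cite[Theorem 1.6.1(6)]{KMSW14} gives temperedness of $\pi_{v_0}$.

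Two imprecisions are worth fixing, though neither changes your conclusion.
\begin{itemize}
\item Under the classical Jacquet--Langlands correspondence, a character $\chi\circ\mathrm{Nrd}$ of $D_{v_a}^\times$ corresponds to a twisted Steinberg representation. That is the \emph{generic} parameter $\chi[N][1]$. The character lies in the packet of $\chi[1][N]$ only through Badulescu's extended correspondence $|\mathrm{LJ}|$, and this is exactly why both $d=1$ and $d=N$ survive.
\item Passing from ``$\pi_\infty$ trivial'' to ``$\tau$ cohomological'' needs an archimedean compatibility statement such as \cite[Corollary 4]{NP21}.
\end{itemize}

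The genuine gap is in the one-dimensional case. You assert that for $\psi=\chi\boxtimes\nu_N$ the packet at $v_0$ consists of characters. That is standard when $v_0$ splits in $E$, where $G_{v_0}$ is an inner form of $\GL_N$. But $v_0$ is an arbitrary finite place, possibly inert or ramified, with $G_{v_0}$ a possibly non-quasi-split unitary group. There you would need an explicit description of the local $A$-packet of this anti-tempered parameter for an inner form of $U(N)$, for instance via compatibility of packets with the Aubert involution. You neither prove nor cite this.

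The paper never looks at the packet at $v_0$. It chooses a place $v_s$ split in $E/F$ with $G_{v_s}\cong\GL_N$, where the packet of $\psi_{v_s}$ is the singleton character, so $\pi_{v_s}$ is one-dimensional. Then \cite[Lemma 6.2]{KST16}, essentially strong approximation for the simply connected group $G^{\der}$ (non-compact at $v_s$), shows that $\pi$ itself is a character of $G(\A_F)$. In particular $\pi_{v_0}$ is a character. Inserting this global step closes your argument.
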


\begin{rmk}
Condition (2) is satisfied if for example $K_{v_a}$ is the subgroup of norm one elements in $G_{v_a} \cong D_{v_a}^\times$, as in Lemma \ref{lem coabelian}.
\end{rmk}

\begin{proof}
We verify the condition of Proposition \ref{prop ramancond}.

Let $\pi \in \mc{AR}_{\disc}(G)$ be such that $\pi_\infty$ is trivial and $(\pi^{\infty,v_0})^{K^{\infty,v_0}} \neq 0$. Let $\psi$ be the Arthur parameter of $\pi$ as in \cite[Theorem 1.7.1]{KMSW14}. Write
\[
\psi = \bigoplus_{i=1}^k \tau_i[d_i],
\]
where the $\tau_i$ are simple parameters. Let $v_a$ be an anisotropic place of $G$. Since $\psi_{v_a}$ is relevant and $G_{v_a}$ is anisotropic (and hence its dual group admits no proper relevant parabolic subgroups), we must have $k=1$.

\noindent\underline{Case 1: $N$ prime.}

In this case, either $\dim \tau_1 = 1$ and $d_1 = N$, or $\dim \tau_1 = N$ and $d_1 = 1$.

In the first situation, let $v_s$ be a split place of $E/F$ such that $G_{v_s}$ is split. By the description of $A$-packets for $\GL_N$ (which coincide with the corresponding singleton $L$-packets, see \cite[\S1.2.2]{KMSW14}), the local representation $\pi_{v_s}$ is a character. By \cite[Lemma 6.2]{KST16}, this implies that the global representation $\pi$ is a character, and hence $\pi_{v_0}$ is also a character.

In the second situation, since $\pi_\infty$ is an irreducible representation of a compact group, it is finite-dimensional and therefore cohomological. By \cite[Corollary 4]{NP21}, the global parameter $\tau_1$ is cohomological, which by the main result of \cite{Car12} implies that $(\tau_1)_{v_0}$ is tempered. By the local Langlands correspondence for unitary groups \cite[Theorem 1.6.1(6)]{KMSW14}, it follows that $\pi_{v_0}$ is tempered.

\noindent\underline{Case 2: $G_{v_a}/K_{v_a}$ abelian.}

If $G_{v_a}/K_{v_a}$ is abelian, then $\pi_{v_a}$ is a character. By \cite[Proposition 3.7]{Bad08}, the local Arthur parameter $\psi_{v_a}$ must be of the form $\chi_0[1][N]$ or $\chi_0[N][1]$, for some self-dual character $\chi_0$ of $\GL_1(E \otimes_F F_{v_a})$. Indeed, in Badulescu's notation, $\psi_{v_a}$ corresponds to a representation of $\GL_N$ that transfers to a character of $G_{v_a}$. More precisely:
\begin{itemize}
\item In case (a) of \cite[Proposition 3.7]{Bad08}, $C(\sigma)$ is a character and $k=1$, so $\psi_{v_a} = \chi_0[N][1]$.
\item In case (b) of \cite[Proposition 3.7]{Bad08}, $C(\tau)$ is a character and $l=1$, so $\psi_{v_a} = \chi_0[1][N]$.
\end{itemize}
In either case, the global Arthur parameter $\psi$ is restricted to the two possibilities appearing in Case 1, and the same argument applies.
\end{proof}

\begin{rmk}\label{rem:abstractramanujanff}
At present, we are only able to carry out the construction over number fields. While the rapidly developing theory over function fields should eventually allow a generalization to positive characteristic, extending the above argument encounters two main obstacles:

First, we only have a transfer of $\pi$ from $G$ to $\Res^E_F \GL_n$ when the parameter of $\pi$ pushes forward to a cuspidal representation, since the global Langlands correspondence of \cite{Laf18} applies only to cuspidal representations. Second, we do not know that the $L$-packet associated to a tempered parameter at a non-split place consists entirely of tempered representations, as the global--local compatibility of \cite{GL17} provides a local Langlands correspondence only up to semisimplification.

Overcoming these difficulties would allow the argument to extend to the function field case.
\end{rmk}

\subsection{Structure}\label{ssec:abstractstructure}
The local structure of $X_G(K^{\infty, v_0})$ can be read off from the label of $\mc B(G_{v_0})$ in the tables of \cite{Tit79}. In our case of unitary groups, we get labels (see \cite[Table 3.1]{DEP}):
\begin{itemize}
    \item $A_n$ when $v_0$ is split,
    \item ${}^2 \! A'_n$ when $v_0$ is inert and $G_{v_0}$ is quasisplit,
    \item $B$-$C_{n/2}$ if $n$ is even, $G_{v_0}$ is quasisplit, and $v_0$ is ramified,
    \item ${}^2 \! A''_n$ if $n$ is even, $G_{v_0}$ is non-quasisplit, and $v_0$ is inert,
    \item ${}^2 \! B$-$C_{n/2}$ if $n$ is even, $G_{v_0}$ is non-quasisplit, and $v_0$ is ramified,
    \item $C$-$BC_{(n-1)/2}$ if n is odd and $v_0$ is ramified. 
   
\end{itemize}
For simplicity, we focus on the $A_n$ and ${}^2 \! A'_n$ cases in this paper. There is no essential difficulty in extending constructions beyond---just slightly more combinatorial complexity. 

The affine root system underlying the label determines the structure of the apartment, with types of simplices corresponding to subsets of the vertices in the corresponding diagram in an inclusion-reversing way. The valency of one type of simplex in another can be computed as the size of the flag variety in the reductive quotient of the special fiber of the reductive model corresponding to the simplex stabilizer. Computing these valencies in general requires input from Bruhat--Tits theory and formulas for point counts of finite groups of Lie type (e.g. \cite[\S2]{ATLAS}). This is easier for top-dimensional facets of a chamber: each one corresponds to leaving out one vertex in the affine Dynkin diagram and the number of chambers containing it is $q_{v_0}^d +1$, where $d$ is the integer attached to the excluded vertex in \cite{Tit79}. 

Finally, while at split places the type-$A_n$ local structure of these complexes is the same as in the constructions of \cite{LSV05a}, their universal cover is a building for a group over a $p$-adic field instead of a function field. Since buildings for groups of split, semisimple rank $\geq 4$ encode the information of their base field (see \cite[\S11.9]{AB08} for a summary), these universal covers are necessarily different when $G$ has full rank $\geq 5$.

We work out all these details precisely for the $A_n$ and ${}^2 \! A'_n$ cases in rank-$5$ in \S\S\ref{sssec:splitcomplextype}, \ref{sssec:urcomplextype} respectively.

\section{Explicit Construction of Complexes}\label{sec:explicitconstruction}

Let $F$ be a totally real number field and let $E/F$ be a quadratic CM extension. We now describe how to make the abstract construction of $X_G(K^{\infty,v_0})$ explicit. Our approach adapts the ideas from the function field case \cite{LSV05a}. 

\subsection{Class Number One}

We require one additional assumption on $G$:

\begin{dfn}
Let $G/F$ be a reductive group such that $G_\infty$ is compact. We say that an open compact subgroup $K^\infty \subseteq G(\A^\infty)$ has class number $1$ if $G(\A^\infty) = K^\infty G(F)$.

We call $K^\infty$ \emph{golden} if, in addition, $K^\infty \cap G(F) = \{1\}$.  
\end{dfn}

The key property is:

\begin{lem}[e.g. {\cite[Lemma 4.1.3(1)]{DEP}}]\label{lem:simpletransitiveaction}
Let $G/F$ be a connected reductive group such that $G_\infty$ is compact, and let $K^\infty \subseteq G(\A^\infty)$ have class number $1$. Then for every finite place $v_0$, the group $\Lambda_{v_0} := G(F) \cap K^{\infty, v_0}$ acts transitively on $G_{v_0}/K_{v_0}$.

If $K^\infty$ is furthermore golden, then $\Lambda_{v_0}$ in addition acts simply. 
\end{lem}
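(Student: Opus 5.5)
The plan is to reduce both claims to the class-number-one decomposition $G(\A^\infty) = K^\infty G(F)$, applied to an element of $G(\A^\infty)$ supported at $v_0$. Throughout, $G_{v_0}$ is embedded in $G(\A^\infty)$ via the coordinate at $v_0$, with trivial components elsewhere. We write $K^\infty = K_{v_0} K^{\infty,v_0}$; shrinking $K^\infty$ if necessary, we may assume it has this product form. The group $\Lambda_{v_0} = G(F) \cap K^{\infty,v_0}$ is viewed inside $G_{v_0}$ through the diagonal embedding of $G(F)$ followed by projection to the $v_0$-coordinate. This projection is injective on $G(F)$.

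\textbf{Transitivity.} Let $g \in G_{v_0}$ and regard it as the adele $(g, 1, 1, \dots)$. By class number one we can write this adele as $\gamma k$, or equivalently (after inverting) as $k\gamma$, with $k \in K^\infty$ and $\gamma \in G(F)$. We compare coordinates.
\begin{itemize}
\item At each finite place $v \ne v_0$ we get $1 = k_v \gamma$. Hence $\gamma = k_v^{-1} \in K_v$ for every such $v$, so $\gamma \in K^{\infty,v_0}$ and therefore $\gamma \in \Lambda_{v_0}$.
\item At $v_0$ we get $g = k_{v_0}\gamma$. Rearranging gives $\gamma^{-1}$ in terms of $g$ and $k_{v_0}$, and this shows $gK_{v_0} \in \Lambda_{v_0} K_{v_0}$.
\end{itemize}
The order of the factors must be arranged so that the coset is in the correct form. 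It is cleanest to apply class number one to $g^{-1}$, writing $g^{-1} = k\gamma$. Then $g = \gamma^{-1}k^{-1}$, and looking at the $v_0$-coordinate gives $g \in \Lambda_{v_0} K_{v_0}$. Since $g$ was arbitrary, $G_{v_0} = \Lambda_{v_0} K_{v_0}$, which is exactly transitivity of the action on $G_{v_0}/K_{v_0}$.

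\textbf{Simplicity in the golden case.} Suppose $\lambda \in \Lambda_{v_0}$ fixes the coset $gK_{v_0}$. Then $g^{-1}\lambda g \in K_{v_0}$. By transitivity we may take $g = \mu$ with $\mu \in \Lambda_{v_0}$, so that $\lambda' := \mu^{-1}\lambda\mu$ lies in $\Lambda_{v_0}$ and also in $K_{v_0}$. Being in $\Lambda_{v_0}$, it lies in $K_v$ for every finite $v \ne v_0$. Together with membership in $K_{v_0}$, this gives $\lambda' \in G(F) \cap K^\infty = \{1\}$. Hence $\lambda = 1$, and the stabilizers are trivial.

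\textbf{Main obstacle.} The only step that needs real care is the bookkeeping of the embeddings. One must check that an element of $G(F)$ lying in $K_v$ at all places $v \ne v_0$ is exactly an element of $K^{\infty,v_0}$. One must also check that the product decomposition of $K^\infty$ is available, or else argue directly with $K^\infty \supseteq K_{v_0}K^{\infty,v_0}$ taken as the factorization given in the notation. Beyond that, the argument is a direct unwinding of the definitions.
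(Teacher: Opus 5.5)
Your argument is correct and is the standard unwinding behind \cite[Lemma 4.1.3(1)]{DEP}, which the paper cites without reproducing. Class number one applied to an adele supported at $v_0$ forces the rational factor into $G(F)\cap K^{\infty,v_0}$. In the golden case, a stabilizer conjugated back to the base coset lands in $G(F)\cap K^\infty=\{1\}$.

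The one thing to fix is the remark about ``shrinking $K^\infty$'' to obtain product form. Class number one is not inherited by smaller open compact subgroups. Moreover, your transitivity step uses $K^\infty\subseteq K_{v_0}K^{\infty,v_0}$, while the golden step uses the reverse inclusion. So you should simply take $K^\infty=K_{v_0}K^{\infty,v_0}$ as the convention already implicit in the notation of the statement.
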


We can now list the precise inputs and assumptions for our explicit construction of $X_G(K^{\infty,v_0})$:

\begin{assm}\label{a:main}
Assume the following conditions on $G$ and $K^{\infty, v_0}$:
\begin{itemize}
    \item $G$ is a super-definite unitary group of rank $N$ with respect to $E/F$, such that there exists $K^\infty_0 \subseteq G(\A^\infty)$ with class number $1$.
    \item Either $N$ is prime, or there exists an anisotropic place $v_a$ such that $G_{v_a}/K_{0,v_a}$ is abelian.
    \item $K^\infty_0$ is golden.
    \item $v_0$ is a finite split or inert place of $F$ such that $G_{v_0}$ is quasisplit and $K_{0,v_0}$ is hyperspecial\footnote{For simplicity, we exclude the ramified and non-quasisplit cases.}. 
    \item $K^{\infty,v_0} \subseteq K_0^{\infty,v_0}$ is a normal subgroup. For example, choose an ideal $\mf n$ of $\mc O_F$ relatively prime to $v_0$ and all places where $G$ or $K_0^\infty$ is ramified, and let $K^\infty := K(\mf n) \subseteq K_0^\infty$ be the corresponding principal congruence subgroup.
\end{itemize}

As convenient notation, whenever these assumptions are satisfied, we define the lattices:
\begin{itemize}
    \item $\Lambda_{0,v_0} := G(F) \cap K_0^{\infty, v_0}$.
    \item $\Lambda_{v_0} := G(F) \cap K^{\infty, v_0}$.
\end{itemize}
\end{assm}

\begin{rmk}\label{rem:superdefiniteexamples}
There are many known examples of $G$ and $K^{\infty,v_0}$ satisfying Assumption \ref{a:main}: see \cite{PS18} for the rank-$2$ case, and \cite{EFMP23} for rank-$3$. However, these small ranks only produce Ramanujan graphs instead of higher-dimensional complexes. The paper \cite{MSG12} constructs an example in rank $5$ which will be our main object of study for part 2 of this paper. Unfortunately, it also proves that there are no other examples over number fields in rank $\geq 5$. 

Finally, we expect many examples in rank $4$---for example, for $E/F = \Q(\sqrt{-7})/\Q$, numerics with the mass formulas of \cite{MSG12} suggest that a rank-$4$ unitary group that is compact at infinity, equivalent to the units of a division algebra over $2$, non-quasisplit at either $3$ or $7$ together with a $K^\infty_0$ that is special at $2$, extra-special at the non-quasisplit prime, and special everywhere else should work.
\end{rmk}
 
\begin{rmk}
The assumption that $K^\infty_0$ is golden can be weakened to class-number-one at the cost of making Constructions \ref{sssec:splitconstruction} and \ref{sssec:nonsplitconstruction} combinatorially more painful---basing them off a Schreier coset graph instead of a Cayley graph. We do not present this for brevity. 
\end{rmk}

\subsection{Construction Overview}

We now describe an explicit construction of the complex $X_G(K^{\infty, v_0})$ under the conditions of Assumption \ref{a:main}. For consistency of notation across the different splitting types of $v_0$, define
\begin{equation}\label{eq:barlambda}
\bar \Lambda_{0, v_0} := \Lambda_{0,v_0}/(\Lambda_{0,v_0} \cap Z_{G_{v_0}}^\spl),
\end{equation}
where $Z_{G_{v_0}}^\spl$ denotes the maximal split torus in the center of $G_{v_0}$.

The construction relies on finding certain special elements called ``gates'':

\begin{dfn}\label{def:gates}
Let $v_0$ be an unramified finite place of $F$, and let $\bar \Lambda$ be a group acting transitively on $G_{v_0} x_0$ for some hyperspecial point $x_0 \in \mc B_{v_0}$. Then define the \emph{gates} of $\bar \Lambda$ to be
\[
S_{\bar \Lambda} := \{s \in \bar \Lambda : sx_0 \text{ is at graph distance } d_{v_0} \text{ from } x_0\},
\]
where $d_{v_0} = 1$ if $v_0$ is split and $d_{v_0} = 2$ if $v_0$ is inert.

If $\bar \Lambda$ in addition acts simply transitively on $G_{v_0} x_0$, then for each $x$ at graph distance $d_{v_0}$ from $x_0$, let $s_x \in S_{\bar \Lambda}$ denote the unique element satisfying $s_x x_0 = x$.
\end{dfn}

\begin{rmk}
By the work of \cite{DEP}, the gates of $\bar \Lambda_{0,v_0}$ form a set of ``golden gates'' for $G_\infty$ as in \cite[Definition 1.2.1]{DEP}. Note that this is independent of the work of \cite[\S6]{DEP} and in particular the rank $4$ or $8$ condition needed for \cite[Conjecture 6.3.4]{DEP} since the Ramanujan conjecture holds on the nose for super-definite $G$ by Theorem \ref{thm:abstractramanujan}. 
\end{rmk}

Theorem \ref{thm:abstractramanujan} allows us then explicitly construct Ramanujan complexes. We present both the split and inert cases as Cayley graph-type constructions:

\subsubsection{The split case}\label{sssec:splitconstruction}
If $v_0$ is split, then $G_{v_0}/K_{0,v_0}$ is the set of vertices of the extended Bruhat--Tits building $\td{\mc B}_{v_0}$ associated with $G_{v_0}$, and $\Lambda_{0,v_0}$ acts simply transitively on these vertices by Lemma \ref{lem:simpletransitiveaction}. Modding out by the center, $\bar \Lambda_{0,v_0}$ acts simply transitively on the vertices of the reduced building $\mc B_{v_0}$.

Fix a point $x_0 \in \mc B_{v_0}$ stabilized by $K_{0,v_0}$, and define $\bar \Lambda_{v_0}$ analogous to $\bar \Lambda_{0,v_0}$. Since the action is simply transitive, the elements $s_x$ in $S_{\bar \Lambda_{0,v_0}}$ generate $\bar \Lambda_{0,v_0}$, and the $1$-skeleton of $\mc B_{v_0}$ is the corresponding Cayley graph.

This allows us to construct:

\begin{construction}\label{cons:split}
Let $\mc X$ be the simplicial complex that is the union of all simplices containing $x_0$, let $Q$ be the vertices of $\mc X$ different from $x_0$, and consider the gate set 
\[
S_{\bar \Lambda_{0,v_0}} = \{s_x \,:\, x \in Q\}
\]
of $\bar \Lambda_{0, v_0}$. Then 
we have the following simplicial structure:
\begin{itemize}
\item The $0$-simplices are the elements of $\bar \Lambda_{v_0} \bs \bar \Lambda_{0,v_0}$. 
\item For $r > 0$, the $r$-simplices are the subsets of the form $x_1 \cup \{s_x x_1, x \in \Delta\}$ for $x_1 \in \bar \Lambda_{v_0} \bs \bar \Lambda_{0,v_0}$ and $x_0 \cup \Delta$ an $r$-simplex of $\mc X$. 
\end{itemize}
\end{construction}

\subsubsection{The inert case}\label{sssec:nonsplitconstruction}
Let $x_0$ be the hyperspecial vertex stabilized by $K_{0,v_0}$, which by our convention we take to be of type $0$. A key property is that $G_{v_0}x_0$ coincides with the set of type-$0$ vertices of $\mc B_{v_0}$. These are precisely the vertices at even graph distance from $x_0$ (see, e.g., \cite[Example 3.3.5]{DEP}). By Lemma \ref{lem:simpletransitiveaction}, $\bar \Lambda_{0,v_0} = \Lambda_{0,v_0}$ acts simply transitively on $G_{v_0}/K_{0,v_0}$, and hence also on the set of even-distance vertices.

In addition, the lattice formed by the vertices in an apartment is exactly half the cocharacter lattice. Thus, each vertex $x$ at distance $2$ from $x_0$ determines a unique vertex $x'$ at distance $1$ that is the midpoint of the segment $[x,x_0]$. Conversely, by extending the line from $x_0$ through such an $x'$ to twice its length in any apartment, every vertex at distance $1$ can be obtained in this way.

We can therefore carry out a construction analogous to that in Construction \ref{cons:split}, with the additional step of ``filling in'' the vertices of other types.

\begin{construction}\label{cons:nonsplit}
Let $Q = \{x_1, \dotsc, x_l\}$ be the set of type-$0$ vertices at distance exactly $2$ from $x_0$ and let $\mc X$ be the simplicial complex that is their convex hull. For each $x_i \in Q$, there is a unique vertex at distance $1$ in the convex hull of $x_0$ and $x_i$. The fibers of the resulting map define a partition $Q = \bigsqcup T_j$. We label the vertices at distance $1$ from $x_0$ as $y_{T_j}$ according to the corresponding piece of the partition. Note that the $y_{T_j}$ range over all vertices at distance $1$ from $x_0$. 

Consider the gate set 
\[
S_{\bar \Lambda_{0,v_0}} = \{s_x \,:\, x \in Q\}.
\]
Then 
we have the following simplicial structure:
\begin{itemize}
    \item A $0$-simplex for each element of $\Lambda_{v_0} \backslash \Lambda_{0,v_0}$.
    \item A $0$-simplex for each subset of the form $x \cup \{s_t x : t \in T_j\}$, for $x \in \Lambda_{v_0} \backslash \Lambda_{0,v_0}$ and one of the $T_j$. We denote this $0$-simplex by $y_{T_j}(x)$, though this labeling is not unique.
    \item A top-dimensional simplex (i.e., a simplex of dimension $\lfloor N/2 \rfloor$) for each collection of $0$-simplices of the form
    \[
    x \cup \{y_{T_j}(x) : y_{T_j} \in \Delta\}
    \]
    where $x \in \Lambda_{v_0} \backslash \Lambda_{0,v_0}$ and $\Delta$ is a top-dimensional simplex of $\mc X$ containing $x_0$.
    \item All subsimplices of the top-dimensional simplices above.
\end{itemize}

\end{construction}

\begin{rmk}
Except for the simplifying result characterizing the possible $y_{T_i}$, Construction \ref{cons:nonsplit} applies whenever $G_{v_0} x_0$ comprises all vertices of the same type as $x_0$. In the unitary case with $v_0$ non-split, this condition holds in all cases except when $N$ is even, $v_0$ is ramified, and $G_{v_0}$ is quasisplit; see, for example, \cite[Table 3.1]{DEP}.

\end{rmk}

\begin{prop}\label{prop:explicitramanujan}
Let $G$, $v_0$, $K^\infty_0$ and $K^{\infty, v_0}$ satisfy Assumption \ref{a:main}, with $v_0$ split or inert and the related $\bar \Lambda_{v_0}$ or $\Lambda_{v_0}$ satisfying Assumption \ref{a:GammaXsimplicialw}. Then Constructions \ref{cons:split} and \ref{cons:nonsplit} provide an explicit description of the Ramanujan simplicial complex $X_G(K^{\infty, v_0})$.
\end{prop}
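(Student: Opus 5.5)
The proof has two parts: the complex is Ramanujan by Theorem \ref{thm:abstractramanujan}, and Constructions \ref{cons:split} and \ref{cons:nonsplit} describe exactly the quotient $\Lambda_{v_0}\bs\mc B_{v_0}$ (resp. $\bar\Lambda_{v_0}\bs\mc B_{v_0}$) with its simplicial structure.

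\textbf{Ramanujan property.} The second bullet of Assumption \ref{a:main} gives either hypothesis (1) or (2) of Theorem \ref{thm:abstractramanujan}. For (2), note that $K^{\infty,v_0}\subseteq K_0^{\infty,v_0}$ agrees with $K_0$ at $v_a$ once $\mf n$ is prime to $v_a$. So $X_G(K^{\infty,v_0})$ is Ramanujan. Assumption \ref{a:GammaXsimplicialw} makes the quotient a genuine simplicial complex, via Proposition \ref{prop:GammaonX}, with $\mc B_{v_0}\to X_G$ a covering map. Passing to $\bar\Lambda$ by modding out $Z^{\spl}$ is harmless, since this center acts trivially on the reduced building.

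\textbf{Identifying vertices.} By Lemma \ref{lem:simpletransitiveaction} and goldenness, $\bar\Lambda_{0,v_0}$ acts simply transitively on $G_{v_0}x_0$. The map $\lambda\mapsto \lambda^{-1}x_0$ (or $\lambda x_0$ with the appropriate side convention) identifies $\bar\Lambda_{0,v_0}$ with this orbit. In the split case the orbit is all vertices. In the inert case it is all type-$0$ vertices, by the cited fact from \cite{DEP}. Since $\Lambda_{v_0}$ is normal in $\Lambda_{0,v_0}$, the quotient $\bar\Lambda_{v_0}\bs G_{v_0}x_0$ becomes the coset space $\bar\Lambda_{v_0}\bs\bar\Lambda_{0,v_0}$. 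Normality lets left and right cosets be used interchangeably, which is what makes the Cayley-graph description consistent.

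\textbf{Identifying simplices.} I will use equivariance. Every simplex of $\mc B_{v_0}$ with a vertex in the orbit is a $\lambda$-translate of a simplex containing $x_0$, i.e. of a simplex of the local complex $\mc X$. If $x=s_x x_0$ with $x\in Q$, then $\lambda x = \lambda s_x x_0$. In right-multiplication form, the neighbours of the vertex $\lambda$ are therefore $\{\lambda s_x\}$, which matches the construction up to the side convention; fixing this convention is the one place requiring care.

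In the split case every simplex contains a vertex, and by transitivity it is a translate of a simplex through $x_0$, so Construction \ref{cons:split} exhausts all simplices. Passing to $\Lambda_{v_0}$-orbits gives the stated description.

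In the inert case the remaining step is the non-type-$0$ vertices. I will show that each distance-$1$ vertex $y$ is determined by the set of type-$0$ vertices at distance $2$ from $x_0$ whose midpoint is $y$. That set is the fiber $T_j$ described before the construction, and its translates give the labels $y_{T_j}(x)$. Different $x$ may produce the same $y$, which is why the labeling is not unique; the construction identifies such labels as equal sets. Top-dimensional simplices all contain a type-$0$ vertex, because $x_0$ is hyperspecial and every chamber has a type-$0$ vertex, so they are translates of chambers of $\mc X$ through $x_0$.

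\textbf{Main obstacle.} The hardest step is checking that the set $x\cup\{s_t x: t\in T_j\}$ determines $y$ independently of the choice of $x$ adjacent to $y$, and that the quotient introduces no extra identifications. Distinct vertices could collapse modulo $\Lambda_{v_0}$; the distance-$>2$ condition of Assumption \ref{a:GammaXsimplicialw} rules this out on stars of vertices. I would first try to prove this independence inside a single apartment, using the half-cocharacter-lattice description, and then transport it by the $G_{v_0}$-action.
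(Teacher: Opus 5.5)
The paper gives no separate proof of this proposition; its justification is the discussion around Constructions \ref{cons:split} and \ref{cons:nonsplit}. Your outline follows that discussion: the Ramanujan property from Theorem \ref{thm:abstractramanujan}, simpliciality from Assumption \ref{a:GammaXsimplicialw}, vertices via the simply transitive action, simplices by equivariance, and non-type-$0$ vertices via midpoints. The split case is fine. In the inert case, however, the step you single out as the main obstacle is false, so the plan cannot be completed.

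\textbf{The gap.} You propose to show that the set $x\cup\{s_tx:t\in T_j\}$ depends only on $y$, not on the type-$0$ vertex $x$ adjacent to $y$. This fails when $y$ has type $2$ (special, non-hyperspecial).

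\emph{A counterexample in one apartment.} Use cocharacter coordinates, so vertices are $\tfrac12\Z^2$, type-$0$ vertices are $\Z^2$, and $x_0=(0,0)$. Take $y=(\tfrac12,\tfrac12)$, whose type-$0$ neighbours are $(0,0),(1,0),(0,1),(1,1)$. From $x=(0,0)$, the only neighbour $z$ for which $y$ is the midpoint of $[x,z]$ is $(1,1)$. From $x'=(1,0)$, it is $(0,1)$. So $(1,0)$ is in the label based at $(1,0)$ but not in the label based at $(0,0)$. The paper's own lattice example shows the same phenomenon: $x_1'$ is adjacent to $x_0$, $x_2$ and $x_2'$, but it is the midpoint of $[x_0,x_2']$ only.

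\emph{The global picture.} The link of $y$ is a generalized quadrangle of order $(q,q^2)$, with $q=q_{v_0}$. The fibre seen from $x$ is the set of $q^4$ type-$0$ neighbours opposite to $x$. Distinct choices of $x$ give distinct labels. Read literally, the construction therefore produces one vertex per type-$0$ neighbour, i.e.\ $(q+1)(q^3+1)$ vertices, for each type-$2$ vertex. No computation inside an apartment can give the independence you want. For type-$1$ vertices your claim does hold: the link is a generalized digon, any two distinct type-$0$ neighbours are opposite, and the label is the full set of type-$0$ neighbours.

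\textbf{The fix.} Use an intrinsic label. Replace $T_j$ by $\tilde T_j:=\{\text{type-}0\text{ neighbours of }y_{T_j}\}\setminus\{x_0\}$. This set lies in $Q$, because type-$0$ vertices are never adjacent to each other. Then each non-type-$0$ vertex is labelled by the set of all type-$0$ vertices adjacent to it. Alternatively, define the non-type-$0$ vertices as equivalence classes of pairs $(x,T_j)$. With either change, the rest of your argument goes through: each chamber has a unique type-$0$ vertex, and Assumption \ref{a:GammaXsimplicialw} is used as you describe.

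\textbf{The side convention.} The equivariant description $\bar\Lambda_{v_0}\lambda\mapsto\bar\Lambda_{v_0}\lambda s_x$ is well defined on $\bar\Lambda_{v_0}\bs\bar\Lambda_{0,v_0}$ without any normality. Normality together with inversion does not convert it into left multiplication by $s_x$; it gives left multiplication by $s_x^{-1}$. The set $\{x_0\}\cup\{s_x^{-1}x_0:x\in\Delta\}$ need not be a simplex of $\mc X$. So you should read $s_xx_1$ in the constructions as right multiplication, rather than appeal to normality.
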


\begin{rmk}

We emphasize that the Assumption~\ref{a:GammaXsimplicialw} plays a crucial role of guaranteeing that the complexes constructed in Construction \ref{cons:split} and \ref{cons:nonsplit} are indeed the quotient complexes $\bar \Lambda_{v_0}\backslash\mc B_{v_0}$ and $\Lambda_{v_0}\backslash\mc B_{v_0}$ respectively. On the other hand, for small enough $K^{\infty,v_0}$ the related $\bar \Lambda_{v_0}$ and $\Lambda_{v_0}$ indeed satisfy this assumption, thus to the end of constructing an infinite family of Ramanujan complexes such an assumption won't cause any trouble.

\end{rmk}

\subsection{Finding Gates}\label{ssec:findinggates}

The constructions in Proposition \ref{prop:explicitramanujan} require finding the gate sets $S_{\Lambda_{0, v_0}}$. These can be done analogously to \cite[\S\S3.3.1, 4.3.1]{DEP}.

\subsubsection{Setup}\label{sub:setup}
Let $B$ be a central simple algebra over $E$. An involution of the second kind on $B$ is a map $\iota$ satisfying $\iota(ab) = \iota(b) \iota(a)$ and such that $\iota$ acts as Galois conjugation over $F$ on $E \subseteq B$.

By the classification of inner forms of unitary groups (see, for example, \cite[\S0.8]{KMSW14}), there exists a central simple algebra $B$ of dimension $N \times N$ over $E$ with an involution of the second kind $\iota$ such that
\[
G = \{x \in \Res^E_F B : \iota(x)x = 1\},
\]
where $\Res^E_F B$ is given by $\Res^E_F B(R) = B \otimes_F R$. Define $\theta$ to be the composition of $\iota$ with inversion as an involution on $\Res^E_F B^{\times}$.

From now on, assume that $v_0$ is a place not lying over $2$ where $B$ is split. Thus, $B_{v_0} \cong \Mat_{N \times N}(E_{v_0})$ and $G_{v_0}\cong \GL_{N}(E_{v_0})$, where we abbreviate $E_{v_0} = E \otimes_F F_{v_0}$. In particular, the involution $\theta$ induces an affine embedding of reduced buildings (\emph{cf.}\ \cite[Theorem 12.7.1, Section 14, p.\ 490–491]{KP23})
\begin{equation}\label{eq Bembed}
\mc B(G_{v_0}) \hookrightarrow \mc B(B_{v_0}^{\times}),
\end{equation}
whose image is precisely the set of $\theta$-fixed points in $\mc B(B_{v_0}^{\times})$. Note that the condition $Z(G_{v_0}) \subset Z(B_{v_0}^{\times}) \cap G_{v_0}$ is always satisfied.

We now make an extra assumption on $K_0^\infty$. It is rather minor: we believe it can always be explicitly realized in practice (\emph{cf.} Section \S \ref{sec:thelattice} for instance).

\begin{assm}\label{a:orderforgates}
There is an $\mc O_E$-order $\Lambda_B$ of $B$ such that:
\[
K_0^\infty = G^\infty \cap (\Lambda_B \otimes_{\mc O_E} \wh{\mc O}_E)
\]
and $(\Lambda_B)_{v_0} := \Lambda_B \otimes_{\mc O_E} \mc O_{E_{v_0}}$ is a maximal $\iota$-stable order of $B_{v_0}$.

\end{assm}

In particular
\[
\Lambda_{0, v_0}  = G(F) \cap \prod_{v \nmid v_0} (\Lambda_B)_v= B \cap  K_0^{\infty, v_0}
\]
and  $K_{B,v_0} := (\Lambda_B)_{v_0} \cap B_{v_0}^\times$ is a $\theta$-stable hyperspecial subgroup of $B_{v_0}^{\times}$ containing $K_{0, v_0}$. Moreover, the vertex $x_0$  related to $K_{0,v_0}$ in $\mc B(G_{v_0})$ corresponds to the vertex $x_0'$ related to $K_{B,v_0}$ in $\mc B(B_{v_0}^{\times})$ via \eqref{eq Bembed}.

\subsubsection{Fixing Coordinates} 
Let $A$ be a maximal split torus of $G_{v_0}$, such that $x_0$ lies in the apartment $\mc A(G_{v_0},A)$ of $G_{v_0}$ defined by $A$. Let $A_B$ be a $\theta$-stable maximal split torus of $B_{v_0}^{\times}$ that contains $A$, whose existence is guaranteed by \cite[Proposition 2.3]{HW93}. In particular, $A$ equals the connected component of $A_{B}^{\theta}$. 

Then the apartment $\mc A(B_{v_0}^{\times},A_B)$ of $B_{v_0}^{\times}$ defined by $A$ contains the vertex $x_0'$, since the embedding of apartment $\mc A(G_{v_0},A)\hookrightarrow\mc A(B_{v_0}^{\times},A_B)$ induced by \eqref{eq Bembed} maps $x_0$ to $x_0'$. As a result,
 $A \subseteq A_B$ are maximally split tori in good position to $K_{0,v_0} \subseteq K_{B, v_0}$. Then there is an induced embedding $X_*(A) \into X_*(A_B)$ 
 corresponding to a containment of Cartan double cosets: for $\lambda\in X_*(A)$ and $\pi_{v_0}$ a uniformizer in $E_{v_0}$:
\[
K_{0, v_0} \lb(\pi_{v_0}) K_{0, v_0} \subseteq K_{B, v_0} \lb(\pi_{v_0}) K_{B,v_0}. 
\] 

Up to a change of basis, we may, without loss of generality, choose the isomorphism $B_{v_0} \cong \Mat_{N \times N}(E_{v_0})$ so that:
\begin{itemize}
    \item $(\Lambda_B)_{v_0} = \Mat_{N \times N}(\mc O_{E_{v_0}})$ and $K_{B,v_0} = \GL_N(\mc O_{E_{v_0}})$;
    \item $A_B = \{\mathrm{diag}(x_1, \dots, x_N) : x_i \in E_{v_0}^{\times}\}$;
    \item The involution $\theta$ can be identified with the involution on $\GL_N(E_{v_0})$ given by $g \mapsto h^{-1} \bar{g}^{-T} h$ for a Hermitian matrix $h \in \GL_N(E_{v_0})$;
    \item When $v_0 = w_1 w_2$ is split, we have
    \[
    h = (h_1, \bar{h}_1^T) = \iota((h_1, I_N))(h_1, I_N)
    \]
    for some $h_1 \in \GL_N(E_{w_0})$. Then, conjugating by $(h_1, I_N)$, we may assume without loss of generality that $h$ is the identity;
    \item When $v_0$ is not split in $E$, since $\theta$ fixes both $K_{B,v_0}$ and $A_B$, up to conjugation by a permutation matrix, we may write
    \[
    h = w_0 \mathrm{diag}(x_1, x_2, \dots, x_r, h_0, \bar{x}_r, \dots, \bar{x}_2, \bar{x}_1),
    \]
    where $x_1, \dots, x_r \in \mc O_{E_{v_0}}^{\times}$, $w_0 \in \mf S_N$ is the longest permutation matrix, $r$ denotes the split rank of $G_{v_0}$, and $h_0 \in \GL_{N-2r}(\mc O_{E_{v_0}})$.
\end{itemize}

Finally, define
\[
X_N := \{(a_1, \dotsc, a_N) : a_1 \geq \cdots \geq a_N, \, a_i \in \Z\}.
\]

As in Assumption \ref{a:main}, we restrict to the cases where $G_{v_0}$ is quasisplit and unramified; the other cases can be treated similarly using the more intricate theory in \cite[\S3.3.2]{DEP}.

\subsubsection{Split Case}
When $v_0 = w_1w_2$ further splits in $E$, we may identify $X_*(A_B)$ with $\Z^N\times\Z^N$ and $X_+(A_B)$ with $X_N\times X_N$ by taking the valuation of each diagonal coordinate in $E_{v_0}$. Then $X_*(A)$ is identified with $\Z^N$ and $X_+(A)$ is identified with $X_N$. More precisely, we have
\[
G_{v_0} \cong \GL_{N,w_1} \xhookrightarrow{a \mapsto (a, \bar a^{-T})} \GL_{N,w_1} \times \GL_{N,w_2} \cong B^\times_{v_0} 
\]
which induces
\[
X_+(A) \cong X_N \xhookrightarrow{\lb \mapsto (\lb, -\lb)} X_N \times (-X_N). 
\]
Next, the condition that $sx$ is at distance $1$ from $x$ for $s \in \bar{\Lambda}_{0,v_0}$ is equivalent to $s$ being contained in a $K_{0,v_0}$-double coset corresponding to some $(a_i)_i \in X_N$ with $a_1 - a_N = 1$ (see, for example, \cite[Example 3.3.4]{DEP}).
Up to multiplication by a central element, we may, without loss of generality, choose a representative for $s$ with $a_N = 0$ and $a_1 = 1$.

We can also compute that
\[
(\Lambda_B)_{v_0} \cap B^\times_{v_0}
= \Mat_{N \times N}(\mc O_{E_{v_0}}) \cap \GL_N(E_{v_0})
\]
is the union of $K_{B,v_0}$-double cosets corresponding to $(a_i)_i \times (a_i')_i \in X_N \times X_N$ with $a_i, a_i' \geq 0$.

Note that any uniformizer $\pi_{w_2}$ of $E_{w_2}$, viewed as an element of $A_B$, corresponds to
\[
\lambda = ((0, \dotsc, 0), (1, \dotsc, 1)) \in \Z^N \times \Z^N.
\]
Therefore, our condition on $s$ is equivalent to requiring that 
\[
s \in \pi_{w_2}^{-1} (\Lambda_B)_{v_0} \setminus ((\Lambda_B)_{v_0} \cup \pi_{w_2}^{-1} \bar\pi_{w_2}(\Lambda_B)_{v_0}\]

In total, if we can find a “globalization” $\pi \in \mc O_E$ that is a uniformizer at $w_2$ and integral at all other places (for example, if $\mc O_E$ is a PID), then 
\[
S_{\bar \Lambda_{0,v_0}} = \{\pi^{-1} g \in \bar \Lambda_{0, v_0} : \iota(g)g = \pi \bar \pi, \, g \in \Lambda_B, \, g \notin \pi \Lambda_B \cup\bar \pi\Lambda_B\},
\]
noting that $\pi \bar{\pi}$ is a uniformizer for $F_{v_0}$.

\subsubsection{Inert Case}
In the case when $v_0$ is moreover inert in $E$ and $G_{v_0}$ is quasisplit, we may identify $X_{*}(A_B)$ with $\Z^N$ and $X_+(A_B)$ with $X_N$ by taking valuations. Then $X_{*}(A)$ is identified with $\Z^r$ and $X_+(A)$ with $X_r$. More precisely, we have
\[
G_{v_0}
\cong U^{E_{v_0}/F_{v_0}}_N(h)
\hookrightarrow \GL_N(E_{v_0})
\cong B^\times_{v_0},
\]
inducing the embedding
\[
X_*(A)
\cong \Z^r
\xhookrightarrow{\lambda \mapsto (\lambda, 0, \dotsc, 0, -\lambda)}
\Z^N
\cong X_*(A_B).
\]

Here, the condition that $sx$ is at distance $2$ from $x$ is equivalent to $s$ being contained in a double coset corresponding to some $(a_i)_i \in X_r \subset \Z^r$ with $a_1 = 1$ and $a_r = 0$ (see, for example, \cite[Example 3.3.4]{DEP}).

Similarly, we have
\[
(\Lambda_B)_{v_0} \cap B^\times_{v_0}
\]
is the union of double cosets for $(a_i)_i \in X_N$ with all $a_i \geq 0$.

Therefore, as in the split case, whenever we can find an appropriate “globalization” $\pi \in \mc O_E$ that is a uniformizer at the place of $E$ dividing $v_0$ and integral at all other places, our condition on $s$ becomes $\pi s \in (\Lambda_B)_{v_0}$ and $s \notin (\Lambda_B)_{v_0}$. This yields
\[
S_{\bar \Lambda_{0,v_0}}=\{\pi^{-1} g \in \bar \Lambda_{0,v_0}: \iota(g)g = \pi^2, \, g \in \Lambda_B, \, g \notin \pi \Lambda_B \}.
\]

\vspace{1cm}

\part{An Explicit Example}

We now apply the general theory above to a specific example of a class-number-one group $G$ and $K^{\infty, v_0} \subseteq G$ satisfying Assumption \ref{a:main}. This example was originally introduced in \cite[Theorem A(2)]{MSG12}. 

Let $E = \Q(\sqrt{-7})$, and let $\mf p_0$ be the prime ideal of $\mc O_E$ lying over $2$, generated by $(1 + \sqrt{-7})/2$. Consider the degree-$5$  division algebra $D$ over $E$ with:
\begin{itemize}
    \item Hasse invariants $\inv_{\mf p_0} D = -\inv_{\bar{\mf p}_0} D \neq 0$, 
    \item Hasse invariants $\inv_\mf p D = 0$ for all $\mf p \neq \mf p_0, \bar{\mf p}_0$.
\end{itemize}
Let $\iota$ be an involution of the second kind on $G$ (see \S\ref{sub:setup}) and set $G$ to be the subgroup of $\iota\circ(\cdot)^{-1}$-fixed points in $D^\times$. Let $G^{\der}$ be the derived subgroup and $G_{\ad}$ the adjoint subgroup of $G$.

For each $p$, let $K_{0,p}^{\der} \subseteq G_p^{\der}$ be parahoric subgroups such that 
\begin{itemize}\label{parahoricconditions}
    \item They are coherent; that is, there exists a group scheme $\ms G_0^{\der}$ over $\Z$, such that for almost all primes $p$, $\ms G_0^{\der}(\Z_p)=K^{\der}_{0,p}$.
    \item $K^{\der}_{0,2}$ is the unique parahoric subgroup of $G_2^{\der}\cong \mathrm{SL}_1(D_2)$.
    \item $K^{\der}_{0,7}$ belongs to one of the two conjugacy classes of special parahoric subgroups (see \cite[\S11.2]{MSG12}). Beware that these two classes behave differently---one is extra-special and the other isn't (though they have the same volume). 
    \item $K^{\der}_{0,p}$ for $p \neq 2,7$ is in the unique conjugacy class of hyperspecial subgroups.
\end{itemize}
Fix a split prime $p$ with respect to $E/\Q$. Let 
\[
\Lambda_{0,p}^{\der}=G^{\der}(\Q)\cap \prod_{p'\neq p}K_{0,p'}^{\der} .
\]
We realize $\Lambda_{0,p}^{\der}$ as a lattice in $G_p^{\der}$ and let $N_{G_{\ad,p}}(\Lambda_{0,p}^{\der})$ be the normalizer of $\Lambda_{0,p}^{\der}$ in $G_{\ad,p}$. Then, \cite[Theorem A(2)]{MSG12} implies that $N_{G_{\ad,p}}(\Lambda_{0,p}^{\der})$ acts simply transitively on the set of vertices $\mc B^0(G_p)$ of $\mc B(G_p)$. 

Over the next sections we find an explicit matrix representation of this example, and then apply our construction to it. 
\begin{itemize}
    \item In \S\ref{sec:thegroup}, we construct the division algebra $D$ as matrices over a number field and realize $G \subseteq D$ rationally.
    \item In \S\ref{sec:integralstructure}, we find a convenient maximal order $\Lambda_D^{\max}$ inside $D$.
    \item In \S\ref{sec:thelattice}, we check that $\Lambda_D^{\max}$ defines subgroups $K_{0,p}$ and lattices $\Lambda_{0,p}$ consistent with \cite{MSG12}. This requires a technical step to relate our context of working with $G$ to the context in \cite{MSG12} working with $G^\der$. 
    \item In \S\ref{sec:gatesets}, we compute local isomorphisms between $G_p$ and standard models of $\GL_5(\Q_p)$ and $U_5(\Q_p)$ so that we can compute the action of $G_p$ on a standard description of its Bruhat--Tits building. This allows us to implement constructions \ref{sssec:splitconstruction} and \ref{sssec:nonsplitconstruction}. 

    \item In \S \ref{sec:findinggates}, we provide an effective way of explicitly constructing all the elements in the gate set $S_{\bar \Lambda_{0,p}}$. 

    \item In \S \ref{sec:algo}, we sum up our full algorithm.
    
\end{itemize}

\section{A Group \lm{$G$} with Class Number One}\label{sec:thegroup}
In \S\ref{sec:Dcons}, we construct our desired division algebra $D$ as explicit matrices over a number field. In \S\ref{sec:Gcons}, we construct an involution of the second kind on $D$ that allows us produce our desired group $G$ rationally.

\subsection{Constructing \lm{$D$}}\label{sec:Dcons}

\subsubsection{Basic setup}\label{sssec:prime}
The paper \cite{Hul35}, though it only applies to division algebras over $\Q$, suggests ideas for finding particularly nice realizations of $D$ as a cyclic algebra. 

Let $E=\Q(\sqrt{-7})$ as before. We write \[\rho_2=(1+\sqrt{-7})/2\quad \text{and}\quad \bar \rho_2=(1-\sqrt{-7})/2.\] Then $2=\rho_2\bar \rho_2$ and $\mc O_E=\Z[\rho_2]$. Notice that a prime $p$ is split over $E/\Q$ if and only if $p\equiv 1,2,4\pmod {7}$, and inert over $E/\Q$ if and only if $p\equiv 3,5,6\pmod {7}$. We fix a split odd prime $p_0$. Write \[p_0=\rho_{p_0}\bar \rho_{p_0},\quad \text{where}\quad \rho_{p_0}=a_0+b_0\rho_2,\quad \ \bar \rho_{p_0}=a_0+b_0\bar \rho_2,\quad a_0,b_0\in\N.\] 
We consider the totally real cyclic extension $M$ of $\Q$ of degree 5 and discriminant $p_0^4$, whose only ramified place is $p_0$. Theoretically $M$ can be realized as a subfield of the totally real field $\Q(\zeta_{p_0}+\zeta_{p_0}^{-1})$ over $\Q$, where $\zeta_{p_0}$ is a primitive $p_0$-th root of unity. As a result, $p_0$ is in the norm group $N_{M/\Q}(M^{\times})$. We fix $\delta\in M$ such that  $N_{M/\Q}(\delta)=p_0$. We fix a $\Z$-basis $\alpha_i$, $i=0,1,2,3,4$ such that $\mc O_{M}=\bigoplus_{i=0}^4\Z \alpha_i$. In particular, when $\mc O_M$ is monogenic over $\Z$, we may find some $\alpha_1\in \mc O_M$ and choose $\alpha_i=\alpha_1^i$, $i=0,1,2,3,4$.

Let $L$ be the compositum of $M$ and $E$, which is a CM field of degree $10$ over $\Q$. Since $M/\Q$ and $E/\Q$ are both Galois, we get that $L/\Q$ is Galois with $\Gal(L/\Q) = \Z/10$ and that $L/E$ is also cyclic and degree $5$. Let $\sigma$ be a fixed order 5 automorphism in $\Gal(L/E)\cong\Gal(M/\Q)$ and let $x\mapsto \bar{x}$ be the order 2 automorphism in $\Gal(L/M)\cong\Gal(E/\Q)$. We have $\mc O_L=\mc O_E\mc O_M$, since the discriminant of $M$ and $E$ are coprime. Also, $L/E$ is only ramified at $\rho_{p_0}$ and $\bar \rho_{p_0}$ where it is totally tame.

\subsubsection{Constructing $D$}\label{sssec divalg}

Now we give a realization of our division algebra $D$ of degree 5 over $E$ (which is non-split only at $\rho_2$ and $\bar \rho_2$). 

We assume that both $\rho_{p_0}$ and $\bar \rho_{p_0}$ are not in the norm group $N_{L/E}(L^{\times})$. Then, we may find an integer $d\in\{0,1,2,3,4\}$ such that $\rho_2/\bar \rho_2\bar \rho_{p_0}^{2d}$ is a fifth power in $\F_{p_0}\cong \mc O_E/\rho_{p_0} \mc O_E$. For the same $d$, we have $ \rho_2\rho_{p_0}^{2d}/\bar \rho_2$ is a fifth power of $\F_{p_0}\cong \mc O_E/\bar \rho_{p_0} \mc O_E$ as well. Define
\[a_1=\rho_2\rho_{p_0}^d/\bar \rho_2\bar \rho_{p_0}^d\quad \text{and}\quad a_e=a_1^e,\quad e=1,2,3,4.\] 
We claim that $a_e$ is a norm element with respect to $L_v/E_v$ at all places $v$ of $E$ not over $2$, but not a norm element at $v=\rho_2,\bar\rho_2$. Indeed, 
\begin{itemize}

\item $L/E$ is either split or inert at a place $v$ of $E$ not over $2$ or $p_0$, and $a_e$ is of valuation 0 at $v$. Then $a_e$ is a norm element at $v$.
\item Both $p_0$ and $\rho_2\rho_{p_0}^{2d}/\bar\rho_2$ are norm elements at $\bar \rho_{p_0}$, where the latter follows from the fact that its image in $\F_{p_0}\cong \mc O_E/\bar \rho_{p_0} \mc O_E$ is a fifth power. Thus $a_e=p_0^{-de}\times (\rho_2\rho_{p_0}^{2d})^e/\bar\rho_2^e$ is a norm element at $\bar \rho_{p_0}$.
\item In parallel $a_e$ is also a norm element at $ \rho_{p_0}$.
\item Finally, $a_e$ is not a norm element at $\rho_2$ or $\bar \rho_2$, since $L/E$ is unramified at these two places and the valuation of $a_e$ at these two places is not divisible by $5$.
\end{itemize}   
As a result, for a fixed $e$, with $a=a_e$, the division algebra
\[D=D(L,a):=\bigoplus_{i=0}^4u^i L,\quad u^5=a,\quad ulu^{-1}=l^{\sigma}\ \text{for all}\ l\in L\]
is non-split only at $\rho_2$ and $\bar \rho_2$. By choosing $e$ and using the Hasse principle we can make it be in the exact isomorphism class we want. 

We can also realize $D \subseteq \Mat_{5 \times 5}(L)$ through
\begin{equation}\label{eq Dgenerators}
u \mapsto \begin{pmatrix}
 & 1 & & & \\
& & 1 & &\\
& & & 1 &\\
& & & & 1\\
a & & & & 
\end{pmatrix}, \qquad l \mapsto 
 \begin{pmatrix}
l &  & & & \\
& l^\sigma & & &\\
& &  l^{\sigma^2} & &\\
& & & l^{\sigma^3} & \\
& & & & l^{\sigma^4}
\end{pmatrix} \text{ for } l \in L.
\end{equation}
This indeed gives us an embedding of algebraic monoids over $E$
\begin{equation}\label{eq Dembedding}
 \eta : D \into \Res^L_E \Mat_{5 \times 5}.
\end{equation}
For later use, we may and will pick an element $b\in 2\mc O_E$ such that 
\[\bar{b}\equiv b^{-1}\pmod{p_0},\quad b^5\equiv p_0^{de} a \pmod{ \bar \rho_{p_0}\mc O_E},\quad b^5\equiv p_0^{-de} a \pmod{\rho_{p_0}\mc O_E}.\] 

\subsection{Constructing \lm{$G$}}\label{sec:Gcons}
\subsubsection{The Involution}\label{sssec:iota} 

We define the following involution (as an anti-homomorphism) on $D$:
\[
\iota : u \mapsto u^{-1}, \quad l \mapsto \bar l \quad \text{for all } l \in L.
\]
Since $\bar a = a^{-1}$, it is indeed an involution of the second kind. A useful formula for this involution is:
\begin{equation}\label{eq involutionform}
\iota\left(\sum_{i=0}^4 l_i u^i\right) 
= \sum_{i=0}^4 u^{-i} \bar l_i 
= \bar l_0 + \sum_{i=1}^4  \bar a\, u^i \bar l_{5-i}
= \bar l_0 + \sum_{i=1}^4 a^{-1} \, \bar l_{5-i}^{\sigma^i} u^i.
\end{equation}

\subsubsection{The Group}

Consider the algebraic group $G$ over $\Q$ defined by
\[
G(R) := U_{1,D}(R) := \{ d \in (D \otimes_\Q R)^\times : \iota(d)d = 1 \}.
\]
Since $\bar a = a^{-1}$, the involution $\iota$ corresponds to the pullback of the conjugate-transpose involution:
\[
\iota : A \mapsto \bar A^T,
\]
via $\eta$. This realizes $G$ as a subgroup under the embedding:
\begin{equation}\label{eq Gembedding}
\eta_G : G \hookrightarrow \Res^M_\Q U^{L/M, \Id}_5 \hookrightarrow \Res^L_\Q \GL_5,
\end{equation}
where $U^{L/M, \Id}_5$ denotes the (non-quasisplit) unitary group preserving the diagonal Hermitian form on $\Res^L_M \Ga^5$. Since this Hermitian form is positive definite, $\Res^M_\Q U^{L/M, \Id}_5$ is anisotropic over $\mathbb{R}$, and thus so is $G$.

\subsection{A Concrete Choice of Initial Data}\label{subsection initialdata}

In this subsection, we consider the construction of the previous two paragraphs in the particular case when $p_0=11$. In this case, 
\begin{itemize}
\item We have $\rho_{11}=2+\sqrt{-7}$ and $\bar \rho_{11}=2-\sqrt{-7}$.
\item The field $M$ is
defined by the minimal polynomial $x^5 - x^4 - 4x^3 + 3x^2 + 3x - 1$ and indexed by \href{https://www.lmfdb.org/NumberField/5.5.14641.1}{Number field 5.5.14641.1} in LMFDB.

\item Write $\alpha_1$ for a root of the above polynomial and realize it as an element in $M$, we further have $\mc O_M=\Z [\alpha_1]$. We fix a choice of $\sigma$, for instance $\sigma(\alpha_1)=-\alpha_1^4+4\alpha_1^2-2$. We also have $\mc O_M=\Z\langle\alpha_1,\alpha_1^{\sigma},\alpha_1^{\sigma^2},\alpha_1^{\sigma^3},\alpha_1^{\sigma^4}\rangle$.

\item Using Sagemath we find the element $\delta=\alpha_1 + 2$ such that  $N_{M/\Q}(\delta)=11$. 

\item We pick $d=3$. In this case, we have \[\bar \rho_{11}=4-\rho_{11}= 4,\quad \rho_2=5,\quad\bar \rho_2=-4 \quad\text{in}\quad \F_{11}\cong \mc O_E/\rho_{11} \mc O_E \] 
and  \[\rho_2/\bar \rho_2\bar \rho_{11}^{2d}=5/(-4\times 4^{2\times 3})=(-1)^5\quad\text{in}\quad \F_{11}\cong \mc O_E/\rho_{11} \mc O_E.\] Similarly, we have \[\rho_{11}=4-\bar\rho_{11}=4,\quad \rho_2=-4,\quad\bar \rho_2= 5 \quad\text{in}\quad \F_{11}\cong \mc O_E/\bar \rho_{11} \mc O_E\] and  \[\rho_2\rho_{11}^{2d}/\bar \rho_2=(-4\times 4^6)/5=(-1)^5\quad\text{in}\quad \F_{11}\cong \mc O_E/\bar \rho_{11} \mc O_E.\] 

\item We pick $e=1$ and $b=10$.

\end{itemize}  

\begin{rmk}

Another possible choice is $p_0=331$. In this case we may simply pick $d=0$, since it is straightforward to verify that $\rho_{2}/\bar \rho_2$ is already a fifth power in $\mc O_E/\rho_{331} \mc O_E$ and $\mc O_E/\bar \rho_{331} \mc O_E$. This special feature will largely simplify our discussion below, but on the other hand the discriminant as well as the ring of integers of $M$ are much more complicated than those for $p_0=11$.

\end{rmk}

\subsection{Some Technical Lemmas}

We record here two technical lemmas relating to our arithmetic setup that will be useful later:

\begin{lem}\label{lem H1N1trivialglobal}

Define $N^1(L):=\{x\in L^\times :  x\bar x=1\}$. Then the first Galois cohomology $H^1(\Gal(L/E),N^1(L))$ is trivial. In other words, for any $x_0\in E^\times$ such that $x_0\bar x_0=1$, there exists $l_0\in L^\times$ such that $l_0\bar l_0 =1$ and $N_{L/E}(l_0)=x_0$. 

\end{lem}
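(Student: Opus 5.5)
The plan is to prove the vanishing of $H^1$ with a long exact sequence in Galois cohomology for the cyclic group $\Gal(L/E)\cong\Gal(M/\Q)$ of order $5$. The only arithmetic input is that $5$ is odd, so $2$ is invertible modulo $5$. Write $\Gamma:=\Gal(L/E)=\langle\sigma\rangle$, and recall that $\sigma$ commutes with complex conjugation $x\mapsto\bar x$ because $\Gal(L/\Q)$ is abelian.

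\textbf{Step 1: a short exact sequence.} By Hilbert 90 for the quadratic extension $L/M$, every $l\in N^1(L)$ has the form $z/\bar z$ with $z\in L^\times$. This gives a short exact sequence of $\Gamma$-modules
\[
1\to M^\times\to L^\times\xrightarrow{\ z\mapsto z/\bar z\ } N^1(L)\to 1 ,
\]
where $\Gamma$ acts on $M^\times$ through the isomorphism $\Gamma\cong\Gal(M/\Q)$.

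\textbf{Step 2: the long exact sequence.} Taking $\Gamma$-cohomology, and using $H^1(\Gamma,L^\times)=0$ (Hilbert 90 for $L/E$), we get an injection
\[
H^1(\Gamma,N^1(L))\hookrightarrow \ker\bigl(H^2(\Gamma,M^\times)\to H^2(\Gamma,L^\times)\bigr).
\]
Since $\Gamma$ is cyclic, we have the periodicity isomorphisms $H^2(\Gamma,M^\times)\cong \Q^\times/N_{M/\Q}(M^\times)$ and $H^2(\Gamma,L^\times)\cong E^\times/N_{L/E}(L^\times)$. Under these, the map is induced by $\Q^\times\hookrightarrow E^\times$; this compatibility holds because the periodicity isomorphism is cup product with a fixed generator of $H^2(\Gamma,\Z)$ and is natural in the module. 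It therefore suffices to show that this map is injective.

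\textbf{Step 3: injectivity, the main step.} Let $q\in\Q^\times$ with $q=N_{L/E}(z)$ for some $z\in L^\times$. Conjugating gives $q=\bar q=N_{L/E}(\bar z)$, so
\[
q^2=N_{L/E}(z\bar z)=N_{M/\Q}(z\bar z),\qquad z\bar z\in M^\times .
\]
On the other hand $q^5=N_{M/\Q}(q)$. Hence
\[
q=q^6\cdot q^{-5}=N_{M/\Q}\bigl((z\bar z)^3q^{-1}\bigr)\in N_{M/\Q}(M^\times).
\]
So the kernel in Step 2 is trivial, and $H^1(\Gamma,N^1(L))=0$. I expect the main obstacle to be the bookkeeping in Step 2, namely checking that the connecting maps and the periodicity identifications are compatible. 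The arithmetic in Step 3 is just the parity trick $6-5=1$.

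\textbf{Step 4: the concrete reformulation, which as literally stated is false.} The same trick handles the ``in other words'' statement, but only under an extra hypothesis. Suppose $x_0\in E^\times$ with $x_0\bar x_0=1$ and $x_0=N_{L/E}(z)$ for some $z\in L^\times$. Then $\bar x_0=N_{L/E}(\bar z)$, so
\[
x_0^2=x_0/\bar x_0=N_{L/E}(z/\bar z),\qquad z/\bar z\in N^1(L).
\]
Setting $l_0:=(z/\bar z)^3x_0^{-1}$ gives $l_0\in N^1(L)$ and $N_{L/E}(l_0)=x_0^6x_0^{-5}=x_0$.

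The hypothesis $x_0\in N_{L/E}(L^\times)$ genuinely cannot be dropped. The element $a_1\in N^1(E)$ constructed above is explicitly not a local norm at $\rho_2$, so it is not a norm from $L^\times$ at all. The reformulation therefore correctly describes $\hat H^0(\Gamma,N^1(L))$ only on the subgroup $N^1(E)\cap N_{L/E}(L^\times)$, and I would state it in that form.
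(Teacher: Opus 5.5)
Your proof that $H^1(\Gamma,N^1(L))=0$ is correct, and it takes a different route from the paper.

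\textbf{The two routes.} The paper uses the sequence $1\to N^1(L)\to L^\times\xrightarrow{N_{L/M}}N_{L/M}(L^\times)\to 1$ and works in degrees $0$ and $1$. Hilbert 90 for $L/E$ identifies $H^1(\Gamma,N^1(L))$ with $N_{L/M}(L^\times)^{\Gamma}/N_{E/\Q}(E^\times)$. The paper then applies the Hasse norm theorem to $L/M$ and $E/\Q$ and checks the local quotients prime by prime: $p$ split in $M$, $p$ split in $E$, and the remaining unramified and tamely ramified (at $7$) cases.

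You instead use Hilbert 90 for $L/M$ to get $1\to M^\times\to L^\times\to N^1(L)\to 1$. You then push the obstruction into $H^2\cong\hat H^0$ and kill it using only $\gcd(2,5)=1$. This avoids the Hasse norm theorem and all local analysis, and it works verbatim whenever $[L:E]$ is odd.

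The paper's approach does give a concrete description of the obstruction group, which is what one would need if the degrees were not coprime. Here, though, your trick short-circuits even the paper's route. The group $N_{L/M}(L^\times)^\Gamma/N_{E/\Q}(E^\times)$ is killed by $2$, since $q^2=N_{E/\Q}(q)$. It is also killed by $5$, since $q=z\bar z$ gives $q^5=N_{E/\Q}(N_{L/E}(z))$.

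\textbf{Your Step 4.} You are also right about the reformulation, and this matters. The ``in other words'' clause asserts $N^1(E)=N_{L/E}(N^1(L))$. That is a statement about $\hat H^0(\Gamma,N^1(L))$, not about $H^1$, and the paper's proof never addresses it.

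Your counterexample $a=a_1$ is valid. We have $a\bar a=1$, but $a$ is not a local norm at $\rho_2$; that failure is exactly why $D=D(L,a)$ is non-split.

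Your corrected statement, $N^1(E)\cap N_{L/E}(L^\times)=N_{L/E}(N^1(L))$, is precisely what the paper uses later, in the proof of Lemma~\ref{lem Lambdapsimtran}. There $x_0$ is first shown to be a norm via the Hasse norm theorem, and only then is $l_0$ adjusted so that $l_0\bar l_0=1$. Your Step 4 supplies the argument that this application actually needs.
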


\begin{proof}

Consider the short exact sequence 
\[
1\rightarrow N^1(L)\rightarrow L^\times \rightarrow N_{L/M}(L^\times )\rightarrow 1.
\]
Taking the related long exact sequence and using that $H^1(\Gal(L/E),L^\times)=\{1\}$, by Hilbert's Theorem 90, we get
\[
H^1(\Gal(L/E),N^1(L))=N_{L/M}(L^\times )^{\Gal(L/E)}/N_{E/\Q}(E^\times).
\]
Therefore, the Hasse norm theorem gives that $H^1(\Gal(L/E),N^1(L))=\{1\}$ if and only if for all prime numbers $p$: 
\begin{equation*}\label{eq H1N1local}
N_{L_p/M_p}(L_p^\times )^{\Gal(M_p/\Q_p)}/N_{E_p/\Q_p}(E_p^\times)
=\{1\}.   
\end{equation*}
We check this for each $p$:

When $p$ is split over $M/\Q$, we have
$L_p\cong E_p^{\oplus 5}$ and thus $$N_{L_p/M_p}(L_p^\times )^{\Gal(M_p/\Q_p)}=N_{E_p/\Q_p}(E_p^\times).$$
When $p$ is split over $E/\Q$, we have 
\[
N_{L_p/M_p}(L_p^\times )=M_p^\times\quad\text{and}\quad N_{E_p/\Q_p}(E_p^\times)=\Q_p^{\times}.
\]
Furthermore, $M_p^{\times\Gal(M_p/\Q_p)}=\Q_p^{\times}.$

When $p$ is not split over either $M/\Q$ or $E/\Q$ (in particular $p\neq 2$) we have that 
\[
N_{L_p/M_p}(L_p^\times )^{\Gal(M_p/\Q_p)}\quad\text{and}\quad
 N_{E_p/\Q_p}(E_p^\times)
 \]
are subgroups of $\Q_p^{\times}$, the latter being of index 2. Thus, we only need to show that
 \[
 N_{L_p/M_p}(L_p^\times )^{\Gal(M_p/\Q_p)}\neq \Q_p^{\times}
 \]
 When $L_p/M_p$ is unramified, a uniformizer of $\Q_p$ is not in the former norm. When $L_p/M_p$ is tamely ramified (i.e. $p=7$), an element in $\Z_p^{\times}$ that is not in $\Z_p^{\times 2}$ is not in the former norm. This completes the proof of the claim and the lemma.

\end{proof}

\begin{lem}\label{lem:Gcenter}
$Z_G(\Q) = N^1(E):=\{x\in E^{\times}: x\bar x=1\}$.
\end{lem}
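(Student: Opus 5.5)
The plan is to compute the center of $G$ as an algebraic group by way of the center of $D^\times$, and then take rational points. Since $D$ is a central simple algebra over $E$, the center of $\Res^E_\Q D^\times$ is $\Res^E_\Q \mathbb G_m$, and for every $\Q$-algebra $R$ the center of $(D\otimes_\Q R)$ is $E\otimes_\Q R$. The group $G$ is an inner form of the unitary group $U^{E/\Q}_5$, so $G_{\bar\Q}\cong \GL_5$ and $G$ is connected reductive. Its center is therefore the intersection of $G$ with the center of the ambient group $\Res^E_\Q D^\times$, namely
\[
Z_G=\{x\in \Res^E_\Q\mathbb G_m : \iota(x)x=1\}.
\]
To see this, I will check over $\bar\Q$. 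There $D\otimes_\Q\bar\Q\cong \Mat_5(\bar\Q)\times\Mat_5(\bar\Q)$, and $G_{\bar\Q}$ becomes the graph $\{(g,{}^t g^{-1})\}$ of $\GL_5$. An element commuting with all of $G(\bar\Q)$ must commute with the $\bar\Q$-span of $G(\bar\Q)$, which is all of $D\otimes\bar\Q$ since $\GL_5$ spans $\Mat_5$. Hence it lies in the center $E\otimes\bar\Q$.

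Taking $\Q$-points, $Z_G(\Q)=G(\Q)\cap E^\times$. On $E\subseteq D$ the involution $\iota$ acts as Galois conjugation, because it is of the second kind: it is defined by $l\mapsto\bar l$ on $L\supseteq E$, and $u\mapsto u^{-1}$ fixes nothing relevant here. So the condition $\iota(x)x=1$ for $x\in E^\times$ reads $\bar x x=1$. This gives exactly $N^1(E)$. The reverse inclusion is immediate: such $x$ are central in $D$ and satisfy $\iota(x)x=1$, so they lie in $Z_G(\Q)$.

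The only step requiring care is that a rational point of $G$ centralizing $G$ (the definition of $Z_G(\Q)$ as the $\Q$-points of the scheme-theoretic center) must lie in $E$. An alternative that avoids the base change argues directly in $D$: the group $G(\Q)$ is Zariski dense in $G$, since $G$ is connected reductive over the infinite field $\Q$ and is unirational. So centralizing $G(\Q)$ is equivalent to centralizing $G$. By the base-change computation above, the centralizer of $G$ inside $D$ is $E$. Either route works; I expect the density or base-change point to be the main technical step, and the rest is a direct unwinding of definitions.
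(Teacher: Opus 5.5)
Your proof is correct, but it takes a different route from the paper.

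\textbf{The paper's route.} The paper argues elementwise inside $D$. The inclusion $N^1(E)\subseteq Z_G(\Q)$ is clear. For $x\in G(\Q)\setminus E$, it takes a degree-$5$ subfield $L\subset D$ that does not commute with $x$. It then writes $L=E[y]$ for some $y$ with $\bar y y=1$, so $y\in G(\Q)$ and $xy\neq yx$. This computes the center of the abstract group $G(\Q)$ directly. It uses only facts about the prime-degree division algebra (any element outside $E$ generates a maximal subfield) together with norm-one generators, and needs no base change or density. The cost is that $L$ must be $\iota$-stable with $\iota$ acting as its CM conjugation, so that $y$ really lies in $G(\Q)$. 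The fixed field $L$ of the construction works: if $x\notin L$, any norm-one $y\in L\setminus E$ suffices; if $x\in L$, use $u\in G(\Q)$, which conjugates $x$ to $x^\sigma\neq x$.

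\textbf{Your route.} You compute the scheme-theoretic center by passing to the split form $\{(g,{}^tg^{-1})\}\subset\Mat_5(\bar\Q)\times\Mat_5(\bar\Q)$ and then descending to $\Q$-points.
\begin{itemize}
\item This is the standard argument and is more general. It works verbatim for the unitary group of any central simple algebra with involution of the second kind, with no division or prime-degree hypothesis.
\item It produces exactly the version used later. In the golden-ness corollary, $G(\Q)\cap K_0^\infty$ lands in $Z_G(\Q)$ because it maps trivially to $G_{\ad}(\Q_p)$, which is a scheme-theoretic statement.
\item You need the Zariski-density input only if you also want the abstract center of $G(\Q)$.
\end{itemize}

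\textbf{One small fix.} The fact that $\GL_5$ spans $\Mat_5$ does not by itself show that the graph spans $\Mat_5\times\Mat_5$. You can repair this in either of two ways.
\begin{itemize}
\item Note that the span is a subalgebra containing $(\lambda I,\lambda^{-1}I)$ for every scalar $\lambda$. It therefore contains both central idempotents, and hence the whole product.
\item Or skip the span entirely and compute the centralizer componentwise. As $g$ ranges over $\GL_5$, so does ${}^tg^{-1}$, so both components of a centralizing element are scalar.
\end{itemize}
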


\begin{proof}
First, $N^1(E) = G(\Q) \cap E^\times \subseteq Z_G(\Q)$. Given an element $x$ in $G$ that is not in $E^{\times}$, consider the field $E[x]$ in $D$ which is of degree 5, then there is another field $L\subset D$ of degree 5 over $E$ that does not commute with $x$. Furthermore, there exists a norm 1 element in $L$ that is not in $E$, thus we may write $L=E[y]$ with $\bar y y=1$. As  a result, $x$ and $y$ do not commute, hence $x$ is not in the center $Z_G(\Q)$.
\end{proof}

\section{Construction of a Maximal \lm{$\iota$}-Stable Order}\label{sec:integralstructure}

In this section, we construct a $\iota$-stable maximal order $\Lambda_D^{\max}$ of $D$. 

\subsection{Constructing a Maximal Order \lm{$\Lambda_D^{\max}$}}\label{ssec:maximalorder}

Under the decomposition $D = \bigoplus_{i=0}^4 u^i L$, we define an order $\Lambda_D$ of $D$ as follows:
$$\Lambda_D=\bigoplus_{i=0}^4u^{i} \bar{\rho}_2^{\lceil \frac{ei}{5}\rceil }\bar\rho_{p_0}^{\lceil \frac{dei}{5}\rceil }\rho_2 ^{\lceil -\frac{ei}{5} \rceil}\rho_{p_0}^{\lceil -\frac{dei}{5} \rceil}\mc O_L.$$
Using the fact that  $u^5=a=(\rho_2 \rho_{p_0}^d)^{e}/(\bar{\rho}_2\bar\rho_{p_0}^d)^{e}$, it is straightforward to check that $\Lambda_D$ is indeed an order of $D$. Moreover, by direct calculation we have 
\[\iota(u^{i} \bar{\rho}_2^{\lceil \frac{ei}{5}\rceil }\bar\rho_{p_0}^{\lceil \frac{dei}{5}\rceil }\rho_2 ^{\lceil -\frac{ei}{5} \rceil}\rho_{p_0}^{\lceil -\frac{dei}{5} \rceil})=u^{5-i}\bar{\rho}_2^{\lceil \frac{e(5-i)}{5}\rceil }\bar\rho_{p_0}^{\lceil \frac{de(5-i)}{5}\rceil }\rho_2 ^{\lceil -\frac{e(5-i)}{5} \rceil}\rho_{p_0}^{\lceil -\frac{de(5-i)}{5} \rceil}.\]
Therefore, $\Lambda_D$ is $\iota$-stable.

Now we construct a maximal order $\Lambda_{D}^{\max}$ of $D$ containing $\Lambda_D$. Define
\[\Lambda_{D,+}=\bigoplus_{i=0}^4u_+^i \bar{\rho}_2^{\lceil \frac{ei}{5}\rceil }\rho_2 ^{\lceil -\frac{ei}{5} \rceil}\rho_{p_0}^{\lceil -\frac{2dei}{5} \rceil} \mc O_L,\quad\text{where}\ u_+=u\delta^{de}\ \text{and}\ u_+^5=p_0^{de}a\]
\[\Lambda_{D,+}^{\max}=\bigoplus_{i=0}^4 y_{+}^i \bar{\rho}_2^{\lceil \frac{ei}{5}\rceil }\rho_2 ^{\lceil -\frac{ei}{5} \rceil}
\rho_{p_0}^{\lceil -\frac{4i}{5} \rceil}\mc O_L,\quad \text{where}\ y_+=(b-u_+)\delta^{4}/\bar \rho_{p_0}\]
\[\Lambda_{D,-}=\bigoplus_{i=0}^4u_-^i \bar{\rho}_2^{\lceil \frac{ei}{5}\rceil }\rho_2 ^{\lceil -\frac{ei}{5} \rceil} \bar\rho_{p_0}^{\lceil \frac{2dei}{5}\rceil}\mc O_L,\quad\text{where}\ u_-=u\delta^{-de}\ \text{and}\ u_-^5=p_0^{-de}a\]
\[\Lambda_{D,-}^{\max}=\bigoplus_{i=0}^4 y_{-}^i \bar{\rho}_2^{\lceil \frac{ei}{5}\rceil }\rho_2 ^{\lceil -\frac{ei}{5} \rceil}\bar\rho_{p_0}^{\lceil \frac{2dei}{5}\rceil}\mc O_L,\quad \text{where}\ y_-=(b-u_-)\delta^{4} /\rho_{p_0}.\]
From our construction, $\Lambda_{D,+}$ and $\Lambda_{D,-}$ are orders of $D$, but on the other hand $\Lambda_{D,+}^{\max}$ and $\Lambda_{D,-}^{\max}$ are a priori only lattices of $D$. 
Finally, define
\[\Lambda_{D}^{\max}:=\Lambda_{D,+}^{\max}+\Lambda_{D,-}^{\max}\subset D.\]

\begin{prop} \label{prop Lambdamax}

The following hold:

\begin{enumerate}

\item For any place $v$ of $E$ not dividing $p_0$, we have $\Lambda_{D_v,+}^{\max}=\Lambda_{D_v,-}^{\max}=\Lambda_{D_v,+}=\Lambda_{D_v,-}=\Lambda_{D_v}$ which is furthermore a maximal order of $D_v$;

\item $\Lambda_{D_{\bar \rho_{p_0}},+}^{\max}$ is a maximal order of $D_{\bar \rho_{p_0}}$, which contains $\Lambda_{D_{\bar \rho_{p_0}}}$,  $\Lambda_{D_{\bar \rho_{p_0}},+}$, $\Lambda_{D_{\bar \rho_{p_0}},-}$ and $\Lambda_{D_{\bar \rho_{p_0}},-}^{\max}$.

\item $\Lambda_{D_{ \rho_{p_0}},-}^{\max}$ is a maximal order of $D_{ \rho_{p_0}}$ which contains $\Lambda_{D_{\rho_{p_0}}}$,  $\Lambda_{D_{\rho_{p_0},+}}$,  $\Lambda_{D_{ \rho_{p_0}},-}$, and $\Lambda_{D_{ \rho_{p_0}},+}^{\max}$.

\item $\Lambda_{D}^{\max}$ is a maximal order of $D$ that contains $\Lambda_D$.

\end{enumerate}

\end{prop}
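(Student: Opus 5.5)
The plan is to work place by place, using the standard fact that an $\mc O_E$-order of $D$ is maximal exactly when its completion at every finite place $v$ of $E$ is maximal. The criterion we will use to certify maximality locally is the discriminant: the reduced discriminant of any maximal order of $D_v$ is $\mf p_v^{m_v(m_v-1)\cdot 5/m_v}$ in the usual normalization, where $m_v$ is the local index. Concretely, the discriminant is trivial when $D_v$ is split, and equals $\mf p_v^{20}$ (that is, $\mf p_v^{5\cdot 4}$) when $D_v$ is a division algebra of degree $5$. An order whose discriminant matches this value is then automatically maximal. Since every lattice we consider is given explicitly on the $L$-basis $u^i$ (or $u_\pm^i$, $y_\pm^i$), its discriminant is a product of the norms of the scaling factors times the discriminant of $\mc O_L$. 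This reduces everything to bookkeeping of the valuations of $\rho_2,\bar\rho_2,\rho_{p_0},\bar\rho_{p_0},\delta$ and of $\mathrm{disc}(L/E)$.

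For (1) I will split into three cases. First, at places $v\nmid 2p_0$ every scaling exponent is a unit, and $\delta$ is a unit since $N(\delta)=p_0$. Hence all five lattices equal $\bigoplus u^i\mc O_{L_v}$, and because $L_v/E_v$ is unramified and $a$ is a unit, this is the standard maximal order of a cyclic algebra. Second, at $v=\rho_2$ or $\bar\rho_2$, $L_v/E_v$ is the unramified degree-$5$ extension and $v(a)=\pm e$ with $5\nmid e$. The ceiling exponents are exactly the ones that make $\bigoplus u^i\pi^{\lceil \pm ei/5\rceil}\mc O_{L_v}$ the unique maximal order $\{x: v(\mathrm{Nrd}\,x)\ge 0\}$. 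I will verify this by checking that each summand consists of elements of nonnegative valuation, using $v(u^i)=\pm ei/5$ in the division algebra normalization. Third, the factors $\delta^{\pm de}$ and the $\rho_{p_0},\bar\rho_{p_0}$-powers are units at these places, so all the lattices coincide there. Outside $p_0$, the element $y_\pm$ differs from $u_\pm$ only by units and by adding the scalar $b\in 2\mc O_E$, which does not change the lattice span once we work at the level of the full direct sum.

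For (2) and (3) I will first argue (2), with (3) following by applying $\iota$ or complex conjugation symmetry. At $w=\bar\rho_{p_0}$, $D_w$ is split and $L_w/E_w$ is totally tamely ramified of degree $5$. I would show three things. First, $u_+^5=p_0^{de}a$ reduces to $b^5$ modulo $\bar\rho_{p_0}$, by the choice of $b$. Second, $b-u_+$ is therefore topologically nilpotent with $(b-u_+)^5\in(\bar\rho_{p_0})$, roughly. Third, after rescaling by $\delta^4/\bar\rho_{p_0}$, the element $y_+$ is integral and generates, together with $\mc O_{L_w}$, an order isomorphic to $\Mat_5(\mc O_{E_w})$. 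To check that $\Lambda^{\max}_{D_w,+}$ is closed under multiplication I would compute $y_+ l = l^\sigma y_+ + (\text{correction})$ using $u_+ l = l^\sigma u_+$. Then I would compare discriminants: the discriminant contribution $\mathrm{disc}(L_w/E_w)=\mf p_w^4$ should be exactly cancelled by the scaling factors $\delta^{4i}/\bar\rho_{p_0}^{i}$, giving discriminant $1$. The containments of the other lattices I would check generator by generator, expressing $u_\pm$ in terms of $y_+$ via the binomial expansion of $(b-\bar\rho_{p_0}\delta^{-4}y_+)^i$.

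For (4), $\Lambda_D^{\max}=\Lambda_{D,+}^{\max}+\Lambda_{D,-}^{\max}$ localizes at each place to the sum of the two local lattices. By (1)–(3), one of the two summands contains the other everywhere, so each completion is a maximal order, and $\Lambda_D$ is contained in it. Hence $\Lambda_D^{\max}$ is a lattice whose completions are all maximal orders, and therefore it is a maximal order.

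The main obstacle is step (2). There I must show that $\Lambda^{\max}_{D_w,+}$ is actually multiplicatively closed and unimodular, which means controlling the nilpotent-like element $b-u_+$ in the totally ramified setting. I would attack this first by reducing modulo $\bar\rho_{p_0}$ and identifying the reduction with a truncated polynomial ring twisted by $\sigma$, which acts trivially on the residue field.
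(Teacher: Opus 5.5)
Your overall architecture matches the paper's: local maximality via the discriminant criterion (Reiner, Thm.~14.9), Hull-type elements $y_\pm$ at the places over $p_0$, containments checked generator by generator, and the local-to-global step in (4). Your discriminant bookkeeping at $w=\bar\rho_{p_0}$ is also correct. The lattice $\bigoplus u_+^i\mc O_{L_w}$ has discriminant $\mf p_w^{20}$, since $u_+^5=p_0^{de}a$ is a $w$-unit. The triangular change of basis to the $y_+^i$ has diagonal entries of $L_w$-valuation $-i$, which brings the discriminant down to $\mf p_w^{0}$.

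The gap is in the step you flag yourself: showing that $\bigoplus y_+^i\mc O_{L_w}$ is an order. The mechanism you propose for it is false. Here $L_w/E_w$ is \emph{tamely} ramified ($p_0=11\neq 5$), so $(b-u_+)^5=(b^5-u_+^5)-5b^4u_++10b^3u_+^2-10b^2u_+^3+5bu_+^4$ has unit coefficients in the basis $u_+^i$. It is therefore not in $\bar\rho_{p_0}\Lambda_{D_{\bar\rho_{p_0}},+}$, and $b-u_+$ is not topologically nilpotent. The binomial collapse you have in mind only happens for $p_0=5$. For the same reason, the reduction mod $\bar\rho_{p_0}$ is not a truncated polynomial ring in which $b-u_+$ becomes nilpotent. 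Although $\sigma$ is trivial on the residue field $k$, it is not trivial on $\mc O_{L_w}/\bar\rho_{p_0}\mc O_{L_w}\cong k[\Pi]/(\Pi^5)$ for a uniformizer $\Pi$, because $\sigma(\Pi)/\Pi$ reduces to a primitive fifth root of unity $\zeta$.

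What actually happens is the following.
\begin{enumerate}
\item Pick $\gamma\in\mc O_{L_w}^\times$ with $N_{L_w/E_w}(\gamma)=u_+^5$. This is possible because $u_+^5\equiv b^5$ is a unit with fifth-power residue. Then $D_w\cong\End_{E_w}(L_w)$ via $l\mapsto m_l$ and $u_+\mapsto\sigma\circ m_\gamma$.
\item Every lattice $\Pi^j\mc O_{L_w}$ is stable under $\mc O_{L_w}$ and $u_+$. On $\Pi^j\mc O_{L_w}/\Pi^{j+1}\mc O_{L_w}$, $u_+$ acts by the \emph{distinct} scalars $\bar\gamma\zeta^j$.
\item Since $\bar b^5=\bar\gamma^5$, exactly one $j_0$ has $\bar b=\bar\gamma\zeta^{j_0}$. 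So $b-u_+$ maps $\Pi^{j_0}\mc O_{L_w}$ into $\Pi^{j_0+1}\mc O_{L_w}$ and is invertible on the other four graded pieces.
\item Because $\delta^4/\bar\rho_{p_0}$ has valuation $-1$, $y_+$ stabilizes $\Pi^{j_0+1}\mc O_{L_w}$. Hence $y_+$ and $\mc O_{L_w}$, and therefore the lattice $\bigoplus y_+^i\mc O_{L_w}$, lie in the maximal order $\mathcal{M}=\End(\Pi^{j_0+1}\mc O_{L_w})$.
\item Your discriminant count then forces equality with $\mathcal{M}$. This gives the order property and maximality at once, with no multiplication table. The paper instead cites Hull's explicit computation for this step.
\end{enumerate}

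Finally, (3) does not follow from (2) by a free symmetry. There is no conjugate-linear automorphism of $D$, since $\bar D\cong D^{\mathrm{op}}\not\cong D$. Identifying $\iota(\Lambda^{\max}_{D_{\bar\rho_{p_0}},+})$ with $\Lambda^{\max}_{D_{\rho_{p_0}},-}$ is the content of the later $\iota$-stability lemma, whose proof itself uses (3). Instead, rerun the argument with $u_-,y_-,\rho_{p_0}$ in place of $u_+,y_+,\bar\rho_{p_0}$.
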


\begin{proof}

\noindent \underline{Statement (1)}

The equalities follow directly from the definitions of these orders, so we only need to show that $\Lambda_{D_v}$ is a maximal order of $D_v$. If $v$ does not divide $2$ or $p_0$, then the division algebra $D_v$ is split. In this case, the extension $L_v/E_v$ is either split or unramified. In particular, the norm map $$N_{L_v/E_v}: \mc O_{L_v}^{\times}\rightarrow \mc O_{E_v}^{\times}$$ is surjective.
Since $a$ is integral in $\mc O_E$, we may pick $\gamma_v\in \mc O_{L_v}^{\times}$ such that $N_{L_v/E_v}(\gamma_v)=a$. Write $u=u'\gamma_v$. Since $\rho_2,\bar \rho_2$ are also integral in $\mc O_{E_v}$, we have
\[
\Lambda_{D_v}=\bigoplus_{i=0}^4u^{i} \mc O_{L_v}=\bigoplus_{i=0}^4u'^{i} \mc O_{L_v}.
\]
Noting that $(u')^5=1$, we identify
\[
D_v=\bigoplus_{i=0}^4u'^i L_v
\]
with $\End_{E_v}(L_v) \cong \Mat_{5 \times 5}(E_v)$, where $u'$ is identified with an order-$5$ Galois action and elements of $L_v$ with multiplications. Moreover, $\Lambda_{D_v}$ is exactly the order in $D_v$ fixing the lattice $\mc O_{L_v}$ of $L_v$. In the split or unramified case, $\mc O_{L_v}$ is a maximal lattice in $L_v$, thus $\Lambda_{D_v}$ is a maximal order in $D_v$.  

Now assume $v=\rho_2$ or $\bar{\rho}_2$, which means that $D_v$ is not split. Let $w$ be the unique place of $L$ over $v$. Then $L_w/E_v$ is an unramified extension of degree 5. By definition, 
\[\Lambda_{D_v}=\lf\{\sum_{i=0}^4 u^i l_i \; \md| \; l_i \in L_w, \; 5w(l_i) \geq - i w(a) \ri\}.
\]
Moreover, for $x=\sum_{i=0}^{4}u^i l_i\in D_v$, we have that the valuation of $u^i l_i,i=0,\dots,4$ with respect to $D_v$ are pairwise different. As a result, $\Lambda_{D_v}$ consists of elements in $D_v$ having non-negative valuation, thus it is the maximal order of $D_v$.

\noindent \underline{Statement (2)} 

The first claim follows from the argument in \cite[Proof of Theorem 4, p526-527]{Hul35}. More precisely, we may use the $\mc O_{E_{\bar \rho_{p_0}}}$-basis  $\{y_+^i\alpha_j \,:\, 0\leq i,j\leq 4\}$ to show that $\Lambda_{D_{\bar \rho_{p_0}},+}^{\max}$ is an order and to calculate its discriminant as in \cite[Proof of Theorem 4, p526-527]{Hul35} , which turns out to be the ideal $\mf p_{E_{\bar{\rho}_{p_0}}}^{5(5-1)}$ of  $\mc O_{E_{\bar{\rho}_{p_0}}}$. By \cite[Theorem 14.9]{Rei03},  $\Lambda_{D_{\bar{\rho}_{p_0}},+}^{\max}$ is therefore a maximal order. In particular, the above argument shows that
\[ \Lambda_{D_{\bar \rho_{p_0}},+}=\bigoplus_{i=0}^4u_{+}^i\mc O_{L_{\bar \rho_{p_0}}}\subset \bigoplus_{i=0}^4y_{+}^i\mc O_{L_{\bar \rho_{p_0}}}=\Lambda_{D_{\bar{\rho}_{p_0}},+}^{\max} .\]
Also, by direct calculation we have
\[\Lambda_{D_{\bar \rho_{p_0}},-}=\bigoplus_{i=0}^4u_{+}^i\delta^{-2dei}\bar \rho_{p_0}^{\lceil\frac{2dei}{5}\rceil}\mc O_{L_{\bar \rho_{p_0}}}\subset \bigoplus_{i=0}^4u_{+}^i\delta^{-dei}\bar \rho_{p_0}^{\lceil\frac{dei}{5}\rceil}\mc O_{L_{\bar \rho_{p_0}}}=\Lambda_{D_{\bar \rho_{p_0}}}\subset \Lambda_{D_{\bar \rho_{p_0}},+},\]
where we used the fact that the valuation of $\delta$ at $\bar \rho_{p_0}$ is $1/5$, and $0\leq \lceil\frac{dei}{5}\rceil-\frac{dei}{5}\leq \lceil\frac{2dei}{5}\rceil-\frac{2dei}{5}$. Finally, we have
that 
\[y_{-}\delta^{-2de}=(b-u_{-})\delta^{-2de}/\rho_{p_0}=(b\delta^{-2de}-u_{+})/\rho_{p_0}\]
and as a result for $i=0,1,2,3,4$ we have
\[y_{-}^i \bar{\rho}_2^{\lceil \frac{ei}{5}\rceil }\rho_2 ^{\lceil -\frac{ei}{5} \rceil}\bar\rho_{p_0}^{\lceil \frac{2dei}{5}\rceil}\mc O_{L_{\bar \rho_{p_0}}}=(b\delta^{-2de}-u_{+})^{i}\bar\rho_{p_0}^{\lceil \frac{2dei}{5}\rceil}\delta^{2dei}\mc O_{L_{\bar \rho_{p_0}}}\subset \bigoplus_{i=0}^4u_{+}^i\mc O_{L_{\bar \rho_{p_0}}},\]
since $b\delta^{-2de}$ has valuation $2de/5\geq 0$ and $\bar \rho_{p_0}^{\lceil \frac{2dei}{5}\rceil}\delta^{2dei}$ has valuation $\lceil\frac{2dei}{5}\rceil-\frac{2dei}{5}\geq 0$ at $\bar \rho_{p_0}$. Thus we have shown that
$\Lambda_{D_{\bar{\rho}_{p_0}},-}^{\max}\subset \Lambda_{D_{\bar \rho_{p_0}},+}$. 

\noindent \underline{Statement (3)}

This argument is analogous to  (2). The only change is to show that $\Lambda_{D_{ \rho_{p_0}},+}^{\max}$ is contained in $\Lambda_{D_{ \rho_{p_0}},-}$. This is because 
\[y_{+}=(b-u_+)\delta^{4}/ \bar \rho_{p_0}=(b-u_{-}\delta^{2de})\delta^{4}/ \bar \rho_{p_0}\]
and as a result for $i=0,1,2,3,4$ we have
\[y_{+}^i \bar{\rho}_2^{\lceil \frac{ei}{5}\rceil }\rho_2 ^{\lceil -\frac{ei}{5} \rceil}
\rho_{p_0}^{\lceil -\frac{4i}{5} \rceil}=(b-u_{-}\delta^{2de})^{i}\delta^{4i}\rho_{p_0}^{\lceil -\frac{4i}{5} \rceil}\mc O_{L_{ \rho_{p_0}}}\subset \bigoplus_{i=0}^4u_{-}^i\mc O_{L_{ \rho_{p_0}}},\]
since $b,\delta^{2de}$ are integral and $\delta^{4i}\rho_{p_0}^{\lceil -\frac{4i}{5} \rceil}$ has valuation $\frac{4i}{5}+\lceil-\frac{4i}{5}\rceil\geq 0$ at $\rho_{p_0}$.

\noindent \underline{Statement (4)} 

This follows from the fact that $\Lambda_{D_v}^{\max}$ is a maximal order of $D_v$ and $\Lambda_{D_v}^{\max} \supset \Lambda_{D_v}$ for any place $v$ of $E$.\footnote{We tacitly used the following fact: Let $\Gamma$ be a lattice of a division algebra $D$ over a number field $E$, then $\Gamma=D\cap\bigcap_{v\ \text{finite}}\Gamma\otimes_{\mc O_E}\mc O_{E_v}$. Indeed, we may find a free $\mc O_{E}$-basis $e_1,\dots,e_{d^2}$ of $\Gamma$, such that we have $\Gamma=\bigoplus_{i=1}^{d^2}\mc O_{E}e_i$ and $\Gamma_v=\bigoplus_{i=1}^{d^2}\mc O_{E_v}e_i$. In general, any element $x=\sum_{i=1}^{d^2}x_ie_i,\ x_i\in E$ of $D$ lies in $\Gamma$ if and only if $x_i$ lies in $\mc O_{E_v}$ for each $i$ and $v$, since $\mc O_{E}=E\cap \bigcap_{v\ \text{finite}}\mc O_{E_v}$.}
 
\end{proof}

\subsection{\lm{$\iota$}-Stability}

In this part, we check that $\Lambda_D^{\max}$ is $\iota$-stable. 

\begin{lem}

\begin{enumerate}

\item For a prime number $p$ different from  $p_0$, we have
$\iota(\Lambda_{D_p})=\Lambda_{D_p}$.

\item We have \[\iota(\Lambda_{D_{\bar \rho_{p_0}}}^{\max})=\iota(\Lambda_{D_{\bar \rho_{p_0}},+}^{\max})=\Lambda_{D_{\rho_{p_0}},-}^{\max}=\Lambda_{D_{\rho_{p_0}}}^{\max}\] and \[\iota(\Lambda_{D_{\rho_{p_0}}}^{\max})=\iota(\Lambda_{D_{\rho_{p_0}},-}^{\max})=\Lambda_{D_{\bar \rho_{p_0}},+}^{\max}=\Lambda_{D_{\bar \rho_{p_0}}}^{\max}.\] Here we use the identification \[D_{p_0}\cong\bigoplus_{i=0}^4 u^i L_{p_0}\cong \bigoplus_{i=0}^4 u^i L_{\rho_{p_0}}\times \bigoplus_{i=0}^4 u^i L_{\bar \rho_{p_0}}\cong D_{\rho_{p_0}}\times D_{\bar \rho_{p_0}},\] and \[\iota((\sum_{i=0}^4u^il_{i},\sum_{i=0}^4u^il_{i}'))=(\sum_{i=0}^4\bar l_{i}'u^{-i},\sum_{i=0}^4\bar l_{i}u^{-i}),\ l_{i}\in L_{\rho_{p_0}},\ l_{i}'\in L_{\bar \rho_{p_0}}.\]
As a result, $\iota(\Lambda_{D_{p_0}}^{\max})=\Lambda_{D_{p_0}}^{\max}$. 
\end{enumerate}

\end{lem}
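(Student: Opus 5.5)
For part (1), I will use the already-displayed formula for $\iota$ on the basis elements of $\Lambda_D$:
\[
\iota\bigl(u^i \bar\rho_2^{\lceil ei/5\rceil}\bar\rho_{p_0}^{\lceil dei/5\rceil}\rho_2^{\lceil -ei/5\rceil}\rho_{p_0}^{\lceil -dei/5\rceil}\bigr)
\]
equals the analogous generator with $i$ replaced by $5-i$. Since $\iota$ is an anti-automorphism acting on $L$ by $l\mapsto\bar l$, we have $\iota(g\,\mc O_L)=\overline{\mc O_L}\,\iota(g)=\mc O_L\,\iota(g)$. Moreover $\mc O_L\,u^{j}c=u^{j}c\,\mc O_L$ for $c\in E$, because conjugation by $u$ is $\sigma$, which preserves $\mc O_L$. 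So each summand $u^i c_i\mc O_L$ is sent to $u^{5-i}c_{5-i}\mc O_L$ (and $i=0$ to itself), which already shows that $\Lambda_D$ is $\iota$-stable globally. Tensoring with $\Z_p$ gives $\iota(\Lambda_{D_p})=\Lambda_{D_p}$. Here I need to remember that $\iota$ is only $\Q$-linear, so the correct localization is at $p$ rather than at a place of $E$; this is why the statement is phrased with $D_p=D_v\times D_{\bar v}$ or $D_v$. For $p\neq p_0$, Proposition \ref{prop Lambdamax}(1) then identifies $\Lambda_{D_p}$ with $\Lambda^{\max}_{D_p}$.

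For part (2), I will first record the local description of $\iota$ on $D_{p_0}\cong D_{\rho_{p_0}}\times D_{\bar\rho_{p_0}}$. Since $\iota$ restricts to complex conjugation on $E$, it swaps the two factors $E_{\rho_{p_0}}$ and $E_{\bar\rho_{p_0}}$ of $E\otimes\Q_{p_0}$, and the displayed formula follows. The outer equalities $\Lambda^{\max}_{D_{\bar\rho_{p_0}}}=\Lambda^{\max}_{D_{\bar\rho_{p_0}},+}$ and $\Lambda^{\max}_{D_{\rho_{p_0}}}=\Lambda^{\max}_{D_{\rho_{p_0}},-}$ come from Proposition \ref{prop Lambdamax}(2),(3): the localization of the sum $\Lambda^{\max}_{D,+}+\Lambda^{\max}_{D,-}$ is the sum of the localizations, and one summand contains the other. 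It therefore remains to prove
\[
\iota\bigl(\Lambda^{\max}_{D_{\bar\rho_{p_0}},+}\bigr)=\Lambda^{\max}_{D_{\rho_{p_0}},-}.
\]
The second equality then follows by applying $\iota$, since $\iota^2=\mathrm{id}$.

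The main computation is $\iota(y_+)$. We have $\iota(u_+)=\iota(\delta^{de})\,u^{-1}=\bar\delta^{de}u^{-1}=\delta^{de}u^{-1}$, because $\delta\in M$ is fixed by the bar. Also $u^{-1}=u^4a^{-1}$ and $u_-=u\delta^{-de}$. Hence
\[
\iota(y_+)=\delta^4\bigl(\bar b-\delta^{de}u^{-1}\bigr)/\rho_{p_0}.
\]
I then plan to show that, in $D_{\rho_{p_0}}$, the element $\bar b-\delta^{de}u^{-1}$ agrees with a unit multiple of $b-u_-$, up to terms that lie in $\rho_{p_0}$ times the order. The inputs are the congruences defining $b$: $\bar b\equiv b^{-1}$ and $b^5\equiv p_0^{-de}a \pmod{\rho_{p_0}}$. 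These should make $u_-^{-1}\equiv b^{-1}$-type relations work modulo the radical. A cleaner route is to avoid element identities altogether and argue structurally. $\iota(\Lambda^{\max}_{D_{\bar\rho_{p_0}},+})$ is a maximal order of $D_{\rho_{p_0}}$, since $\iota$ is an anti-isomorphism and anti-isomorphisms preserve maximality. Both it and $\Lambda^{\max}_{D_{\rho_{p_0}},-}$ contain $\iota(\Lambda_{D_{\bar\rho_{p_0}}})=\Lambda_{D_{\rho_{p_0}}}$, by part (1)'s generator computation together with Proposition \ref{prop Lambdamax}. So it suffices to show that $\iota(y_+)$ lies in $\Lambda^{\max}_{D_{\rho_{p_0}},-}$: then $\iota(\Lambda^{\max}_{D_{\bar\rho_{p_0}},+})$, which is generated over $\iota(\Lambda_{D_{\bar\rho_{p_0}},+})$ by $\iota(y_+)$, is contained in a maximal order, and maximality forces equality. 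I expect this membership check, which is a valuation and congruence computation in $\bigoplus u_-^i\mc O_{L_{\rho_{p_0}}}$ mirroring Statement (3) of Proposition \ref{prop Lambdamax}, to be the main obstacle.

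The final claim $\iota(\Lambda^{\max}_{D_{p_0}})=\Lambda^{\max}_{D_{p_0}}$ then follows by combining the two swapped equalities.
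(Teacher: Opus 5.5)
Your part (1) is correct, and slightly cleaner than the paper's argument. The generator identity makes $\Lambda_D$ globally $\iota$-stable, and since $\iota$ is $\Q$-linear this passes to $\Lambda_D\otimes_\Z\Z_p$ for every $p$ at once. The paper instead treats $p\neq 2$ via $a\in\mc O_{L_p}^\times$ and $p=2$ via uniqueness of the maximal order of $D_{\rho_2}\times D_{\bar\rho_2}$.

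In (2), the outer equalities and your reduction are exactly the paper's. Both sides are maximal orders, and $\Lambda^{\max}_{D_{\bar\rho_{p_0}},+}$ is generated by $\mc O_{L_{\bar\rho_{p_0}}}$ and $y_+$, so everything hinges on $\iota(y_+)\in\Lambda^{\max}_{D_{\rho_{p_0}},-}$. You never check this membership, and it is the entire content of (2), so as written there is a genuine gap. Your indicated route is also aimed at the wrong order. We have $\iota(y_+)\notin\bigoplus_i u_-^i\mc O_{L_{\rho_{p_0}}}$, because its $u^0$-coordinate $\delta^4\bar b/\rho_{p_0}$ has valuation $-1/5$ (normalizing $v(\rho_{p_0})=1$). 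So nothing mirroring Proposition~\ref{prop Lambdamax}(3) can work, and the test must be made in $\bigoplus_i y_-^i\mc O_{L_{\rho_{p_0}}}$.

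More seriously, the gap cannot be filled: with $y_-$ as defined, the membership is false. First, $\iota(u_+)=\delta^{de}u^{-1}=u_-^{-1}$. Setting $w:=\delta^4/\rho_{p_0}$, this gives $\iota(y_+)=w(\bar b-u_-^{-1})$, while $y_-=(b-u_-)w$. Using $u_-wu_-^{-1}=w^\sigma$ one gets
\[
u_-\,\iota(y_+)+\bar b\,y_-=b\bar b\,w-w^{\sigma}=w\bigl(b\bar b-(\delta^{\sigma}/\delta)^4\bigr).
\]
If $\iota(y_+)$ lay in $\Lambda^{\max}_{D_{\rho_{p_0}},-}$, so would the left side, since $u_-$ and $y_-$ do. The right side would then lie in $\Lambda^{\max}_{D_{\rho_{p_0}},-}\cap L_{\rho_{p_0}}=\mc O_{L_{\rho_{p_0}}}$.

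However, $L_{\rho_{p_0}}/E_{\rho_{p_0}}$ is totally and tamely ramified, $\delta$ is a uniformizer, and $\sigma$ generates inertia. Hence $\delta^\sigma/\delta\equiv\zeta\pmod{\mf p_{L_{\rho_{p_0}}}}$ with $\zeta\in\F_{p_0}$ a primitive fifth root of unity. Since $b\bar b\equiv 1$, the bracket is $\equiv 1-\zeta^4\neq 0$, and the right side has valuation $-1/5$.

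Your first idea runs into exactly this. Carried out, it gives $\iota(y_+)=-\bar b\,w(b-u_-)u_-^{-1}+\bar b(b-\bar b^{-1})\,w\,u_-^{-1}$, with $w$ to the \emph{left} of $b-u_-$, whereas $y_-$ has it on the right.

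The paper's own verification has the same defect. It reduces $\bar b-b^4\delta^{4de}(\delta^\sigma)^{de}a^{-1}(\delta^\sigma)^4\delta^{-4}$ to $\F_{p_0}$ as if $\delta^\sigma=\delta$. It also writes $u^4=u_-^4\delta^{4de}$, dropping the Galois twists; correctly, $\delta^{4+de}u^4=u_-^4p_0^{de}(\delta^\sigma)^4$. The true residue is $b^{-1}(1-\zeta^4)$.

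So (2), and with it Proposition~\ref{prop iotastable}, needs a modified definition rather than a proof. One option is to take the $\rho_{p_0}$-component to be $\iota(\Lambda^{\max}_{D_{\bar\rho_{p_0}},+})$. This is the order generated by $\Lambda_{D_{\rho_{p_0}},-}$ and $\delta^4(b-u_-)/\rho_{p_0}$, by the formula for $\iota(y_+)$ just given. Its last term lies in $\Lambda_{D_{\rho_{p_0}},-}$ because $b-\bar b^{-1}\in p_0\mc O_{E_{\rho_{p_0}}}$.
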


\begin{proof}

The proof of (1) is straightforward. For $p\neq 2$, we have that
$$\Lambda_{D_p}=\bigoplus_{i=0}^4u^{i} \mc O_{L_p},$$
where we write $\mc O_{L_p}:= \mc O_{L}\otimes_{\Z}\Z_p$. It is $\iota$-stable since in this case $\mc O_{L_p}$ is $\Gal(L/K)$-stable, $\iota(u^i)=u^{5-i}a^{-1}$ and $a\in \mc O_{L_p}^{\times}$.
For $p=2$, it follows from the fact that both $\iota(\Lambda_{D_2})$ and $\Lambda_{D_2}$ are  maximal orders of $D_{2}=D_{\rho_2}\times D_{\bar \rho_2}$, and thus they are the same.

For (2), by symmetry and Proposition \ref{prop Lambdamax}.(2)(3), we only need to show that $\iota(\Lambda_{D_{\bar \rho_{p_0}},+}^{\max})\subset\Lambda_{D_{\rho_{p_0}},-}^{\max}$. Here, $\Lambda_{D_{\rho_{p_0}},-}^{\max}$ is a maximal order of $D_{\rho_{p_0}}$ and $\Lambda_{D_{\bar \rho_{p_0}},+}^{\max}$ is a maximal order of $D_{\bar \rho_{p_0}}$. Since 
\[\Lambda_{D_{\bar{\rho}_{p_0}},+}^{\max}=\bigoplus_{i=0}^4y_{+}^i\mc O_{L_{\bar \rho_{p_0}}},\]
we only need to show that \[\iota(y_{+})\in \Lambda_{D_{\rho_{p_0}},-}^{\max}=\bigoplus_{i=0}^4y_{-}^i\mc O_{L_{ \rho_{p_0}}}.\]
By definition, we have
\begin{equation*}
\begin{aligned}
\iota(y_+)&=\iota((b-u_{+})\delta^4/\bar \rho_{p_0})=   \iota(b\delta^4-u\delta^{4+de})/\rho_{p_0}=(\delta^4\bar b-\delta^{4+de}u^{-1})/\rho_{p_0}\\
&= (\delta^4\bar b-\delta^{4+de}u^{4}a^{-1})/\rho_{p_0}=(\delta^4\bar b-u_-^{4}\delta^{4de}(\delta^{\sigma})^{4+de}a^{-1})/\rho_{p_0}\\
&=[\delta^4\bar b-(y_-\rho_{p_0}\delta^{-4}-b)^{4}\delta^{4de}(\delta^{\sigma})^{4+de}a^{-1}]/\rho_{p_0}\\
&=\delta^4\bar b/\rho_{p_0}-(y_-\rho_{p_0}\delta^{-4}-b)^{4}\delta^{4de}(\delta^{\sigma})^{de}a^{-1}(\delta^{\sigma})^4/\rho_{p_0}
\end{aligned}    
\end{equation*}

\begin{lem}

$(y_-\rho_{p_0}\delta^{-4}-b)^{4}=b^4+\sum_{i=1}^4y_-^ia_i$, where $a_i$ lies in $\mf p_{L_{\rho_{p_0}}}$. 

\end{lem}

\begin{proof}

By direct calculation, for $l\in L$ we have 
\[ly_-\rho_{p_0}\delta^{-4}=l(b-u\delta^{-de})=-ul^{\sigma^{-1}}\delta^{-de}+bl=-y_{-}\rho_{p_0}\delta^{-4}l^{\sigma^{-1}}+b(l-l^{\sigma^{-1}}).\]
Using this equation several times to move the $y_-$ occurring in each monomial of $(y_-\rho_{p_0}\delta^{-4}-b)^{4}$ to the leftmost and the fact that $b\in\mc O_{L_{\rho_{p_0}}}\cap E$ and $\rho_{p_0}\delta^{-4}\in \mf p_{L_{\rho_{p_0}}}$ the result follows. 

\end{proof}

As a result, we only need to show that
\[\delta^4\bar b/\rho_{p_0}-b^4\delta^{4de}(\delta^{\sigma})^{de}a^{-1}(\delta^{\sigma})^4/\rho_{p_0}\in \mc O_{L_{\rho_{p_0}}},\]
or equivalently,
\[\bar b-b^4\delta^{4de}(\delta^{\sigma})^{de}a^{-1}(\delta^{\sigma})^4\delta^{-4}\in \mf p_{L_{\rho_{p_0}}}.\]
This is simply because its image in $\F_{p_0}\cong \mc O_{L_{\rho_{p_0}}}/ \mf p_{L_{\rho_{p_0}}}$ is
\[b^{-1}-b^4\delta^{4de}\delta^{de}a^{-1}\delta^4\delta^{-4}=b^{-1}(1-b^5p_0^{de}a^{-1})=0,\]
since $\delta^{5}=p_0$, $\bar b = b^{-1}$ and the $\sigma$-action is trivial on the residue field $\F_{p_0}$.

\end{proof}

As a result, we have 
$\iota(\Lambda_D^{\max})=\Lambda_{D}^{\max}$, since the same equations hold after taking the completion at every prime $p$. We note it as the following formal proposition.

\begin{prop}\label{prop iotastable} $\Lambda_{D}^{\max}$ is $\iota$-stable.

\end{prop}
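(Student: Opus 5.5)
The plan is a local-global argument: $\iota$-stability of a lattice in $D$ can be checked one rational prime at a time, using the footnote in the proof of Proposition \ref{prop Lambdamax}. That footnote says a lattice $\Gamma\subset D$ is recovered as $\Gamma = D\cap\bigcap_{v}\Gamma_v$. Since $\iota$ is only $\Q$-linear, not $E$-linear, we group places of $E$ by the rational prime $p$ below them and work with $\Lambda^{\max}_{D_p} := \Lambda_D^{\max}\otimes_{\Z}\Z_p=\prod_{v\mid p}\Lambda^{\max}_{D_v}$. The involution $\iota$ extends $\Q_p$-linearly to $D_p = D\otimes_\Q \Q_p$. Since $\iota$ is $\Q$-linear and $\Z$-lattices satisfy $\Gamma = D\cap\bigcap_p (\Gamma\otimes\Z_p)$, we get
\[
\iota(\Lambda_D^{\max}) = \iota(D)\cap\bigcap_p \iota(\Lambda^{\max}_{D_p}) = D\cap \bigcap_p \iota(\Lambda^{\max}_{D_p}).
\]
So it suffices to show $\iota(\Lambda^{\max}_{D_p})=\Lambda^{\max}_{D_p}$ for every prime $p$.

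For $p\neq p_0$, Proposition \ref{prop Lambdamax}(1) gives $\Lambda^{\max}_{D_v}=\Lambda_{D_v}$ for every $v\mid p$, hence $\Lambda^{\max}_{D_p}=\Lambda_{D_p}$. Part (1) of the preceding lemma then gives $\iota(\Lambda_{D_p})=\Lambda_{D_p}$. For $p=p_0$, we have $D_{p_0}\cong D_{\rho_{p_0}}\times D_{\bar\rho_{p_0}}$, and Proposition \ref{prop Lambdamax}(2),(3) identify the local components of $\Lambda_D^{\max}=\Lambda^{\max}_{D,+}+\Lambda^{\max}_{D,-}$. At $\bar\rho_{p_0}$ the component is $\Lambda^{\max}_{D_{\bar\rho_{p_0}},+}$, since it contains the $-$ lattice. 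At $\rho_{p_0}$ it is $\Lambda^{\max}_{D_{\rho_{p_0}},-}$. Part (2) of the preceding lemma says that $\iota$ swaps these two factors and exchanges them, so $\iota(\Lambda^{\max}_{D_{p_0}})=\Lambda^{\max}_{D_{p_0}}$.

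The only point needing care is the compatibility of the completion with $\iota$, which is not $E$-linear. The issue is that $\iota$ maps $D_v$ to $D_{\bar v}$ rather than to itself. Working at rational primes $p$ rather than at places of $E$ removes this problem, because $\iota\otimes\Q_p$ is a well-defined involution of $D_p$ and preserves $D$. Apart from this bookkeeping, everything follows from the previous lemma, so I expect no real obstacle.
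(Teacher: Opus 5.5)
Your proposal is correct and follows the paper's argument. The paper also deduces the proposition directly from the preceding lemma, using part (1) together with Proposition~\ref{prop Lambdamax}(1) for $p\neq p_0$ and part (2) at $p_0$, and simply notes that the equalities hold after completing at every rational prime; you make that local--global step explicit, correctly working at rational primes because $\iota$ is only $\Q$-linear.
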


\subsection{Description of \lm{$\Lambda_D^{\max}$} and \lm{$(\Lambda_D^{\max})^{\iota}$}} \label{subsection despLambdaDmaxiota}

The main goal here is to describe $\Lambda_{D}^{\max}$ and its $\iota$-invariant part $(\Lambda_{D}^{\max})^{\iota}$ as free $\Z$-modules, and then to propose a computational way to find a basis.

We first study the $\iota$-invariant part $D^{\iota}$ of $D$. By direct calculation, for $\gamma=\sum_{i=0}^4u^il_i\in D^{\iota}$ with $l_i\in L$, we have
\[\bar l_0+\sum_{i=1}^4\bar l_ia^{-1}u^{5-i}=\iota(\gamma)=\gamma=l_0+\sum_{i=1}^4l_i^{\sigma^i}u^i.\]
Comparing the coefficients, we have
\[\bar l_0=l_0,\ l_1^{\sigma}=a^{-1}\bar l_{4},\ l_2^{\sigma^2}=a^{-1}\bar l_{3},\ l_3^{\sigma^3}=a^{-1}\bar l_{2},\ l_4^{\sigma^4}=a^{-1}\bar l_{1}.\]
Write $l_i=m_i^0+\rho_2m_i^1$ for $m_i^0,m_i^1\in M$, then $m_0^0, m_1^0, m_1^1, m_2^0, m_2^1$ are independent variables, and the rest variables $m_0^1,m_3^0,m_3^1,m_4^0,m_4^1$ are determined by the former ones. More precisely, write $a=a_0+\rho_2 a_1$ with $a_0,a_1\in \Q$, then by direct calculation we have 
\begin{equation}\label{eq mij}
\begin{aligned}
m_0^1&=0,\ m_{4}^{0}=(a_0+a_1)(m_{1}^0)^{\sigma}+(a_0-a_1)(m_1^1)^{\sigma},\\ m_{4}^1&=-a_1 (m_1^0)^{\sigma}-(a_0+a_1)(m_1^1)^{\sigma},\\ m_{3}^{0}&=(a_0+a_1)(m_{2}^0)^{\sigma^2}+(a_0-a_1)(m_2^1)^{\sigma^2},\\ m_{3}^1&=-a_1 (m_2^0)^{\sigma^2}-(a_0+a_1)(m_2^1)^{\sigma^2}.   
\end{aligned}  
\end{equation} 
Now we consider $\Lambda_D^{\max}$ and $(\Lambda_{D}^{\max})^{\iota}$. A priori, they are free $\Z$-modules of rank $50$ and $25$ respectively.
Write
\[
D=\bigoplus_{i=0}^4\bigoplus_{j=0}^1u^i\rho_2^j M
\]
Also, as free $\mc O_{M}$-modules:
\[
\Lambda_{D,+}^{\max}=\bigoplus_{i=0}^4\bigoplus_{j=0}^1 y_{+}^i a_{+,i}\rho_2^j\mc O_M\quad\text{and}\quad\Lambda_{D,-}^{\max}=\bigoplus_{i=0}^4 \bigoplus_{j=0}^1y_{-}^i a_{-,i}\rho_2^j\mc O_M
\]
with 
\[
a_{+,i}:=\bar{\rho}_2^{\lceil \frac{ei}{5}\rceil }\rho_2 ^{\lceil -\frac{ei}{5} \rceil}\rho_{p_0}^{\lceil -\frac{4i}{5} \rceil}\quad\text{and}\quad a_{-,i}:=\bar{\rho}_2^{\lceil \frac{ei}{5}\rceil }\rho_2 ^{\lceil -\frac{ei}{5} \rceil}\bar\rho_{p_0}^{\lceil \frac{2dei}{5}\rceil},\ i=0,1,2,3,4.
\]
Define $\vec u, \vec y_+,\vec y_-$ as row vectors in $D^{10}$ with their $(2i+j+1)$-th coordinate being 
\[
u^i\rho_2^j,\quad y_{+}^i a_{+,i}\rho_2^j,\quad y_{-}^i a_{-,i}\rho_2^j
\] 
respectively for $i=0,1,2,3,4$ and  $j=0,1$. Define 
\[
T_{\vec u,\vec y}(s,t)=\begin{pmatrix}
1 & s & s^2 & s^3 & s^4 \\
0 & t & \Sigma_{1,1}(s)t & \Sigma_{2,1}(s)t & \Sigma_{3,1}(s)t  \\
0 & 0 & \Pi_2(t) & \Sigma_{1,2}(s)\Pi_2(t) & \Sigma_{2,2}(s)\Pi_2(t)  \\
0 & 0 & 0 & \Pi_3(t) & \Sigma_{1,3}(s)\Pi_3(t)  \\
0 & 0 & 0 & 0 & \Pi_4(t)
  \end{pmatrix},
  \]
where for $s,t\in L$, we use shorthand: 
\[
\Sigma_{1,1}(s)=s+s^{\sigma^{-1}},\quad \Sigma_{2,1}(s)=s^2+ss^{\sigma^{-1}}+(s^{\sigma^{-1}})^2,\] \[\Sigma_{3,1}(s)=s^3+s^2s^{\sigma^{-1}}+s(s^{\sigma^{-1}})^2+(s^{\sigma^{-1}})^3,\quad \Sigma_{1,2}(s)=s+s^{\sigma^{-1}}+s^{\sigma^{-2}},
\] 
\[
\Sigma_{2,2}(s)=s^2+ss^{\sigma^{-1}}+ss^{\sigma^{-2}}+(s^{\sigma^{-1}})^2+s^{\sigma^{-1}}s^{\sigma^{-2}}+(s^{\sigma^{-2}})^2,\] \[\Sigma_{1,3}(s)=s+s^{\sigma^{-1}}+s^{\sigma^{-2}}+s^{\sigma^{-3}},\quad \Pi_{k}(t)=\prod_{i=0}^{k-1}t^{\sigma^{-i}}.
\]  
Recall that $y_\pm=s_\pm +ut_\pm $. Then, by direct calculation, we have\footnote{Here and throughout, the notation $y_{\pm}$, $s_{\pm}$, $t_{\pm}$, etc. indicates that all signs are chosen consistently, giving two identities corresponding to the $+$ case and the $-$ case.}
\[
(
 1 ,
 y_{\pm},
  y_{\pm}^2, 
 y_{\pm}^3,
 y_{\pm}^4) 
=(1,u,u^2,u^3,u^4)T_{\vec u,\vec y}(s_{\pm},t_{\pm}),
\]
where 
\[
s_+=\delta^4b/\bar \rho_{p_0},\quad t_+=-\delta^{de+4}/\bar \rho_{p_0},\quad s_-=\delta^4b/ \rho_{p_0},\quad t_-=-\delta^{4-de}/\rho_{p_0}.
\]
From this, we define matrices $A_+,A_-\in\GL_{5}(L)$ by 
\[
A_{\pm}=T_{\vec u,\vec y}(s_{\pm},t_{\pm})\diag(a_{\pm,0},a_{\pm,1},a_{\pm,2},a_{\pm,3},a_{\pm,4}).
\]
Moreover, for an element $l=m_0+m_1\rho_2\in L$ with $m_0,m_1\in M$, we identify $l$ with the $2\times 2$ matrix 
\[
\begin{pmatrix} m_0 & -2m_1\\ m_1 & m_0+m_1 \end{pmatrix}
\]
under which multiplication by $l$ is realized as the right multiplication of the matrix on row vectors. In this way, we realize $A_+$ and $A_-$ as matrices in $\GL_{10}(M)$. From our construction, we have
\[
\Lambda_{D,\pm}^{\max}=\vec y_{\pm} \cdot \mc O_M^{10}=\vec uA_{\pm} \cdot\mc O_M^{10},
\]
where elements in $\mc O_M^{10}\subset M^{10}$ are regarded as column vectors of 10 coordinates.

For a lattice $\mc L$ of column vectors over $\mc O_M$ (resp. over $\Z$ and of number of rows divisible by 5), we denote by $\mc L_\Z$ (resp. $\mc L_{\mc O_M}$) the corresponding lattice of column vectors over $\Z$ using the identification $\mc O_M\cong\bigoplus_{i=0}^4\Z\alpha_i$ for each $\mc O_M$-coordinate. 

We consider the sum lattice $A_{+} \cdot\mc O_M^{10}+A_{-} \cdot\mc O_M^{10}$ in $M^{10}$, which a priori might not  be a free $\mc O_{M}$-module. Instead,  $(A_{+} \cdot\mc O_M^{10}+A_{-} \cdot\mc O_M^{10})_{\Z}$ is a free $\Z$-module of rank $50$. Then, using the Hermite normal form (HNF) algorithm over $\Z$, we can find an explicit matrix $A_{\max}\in \GL_{50}(\Q)$ such that 
\[
A_{\max}\cdot\Z^{50}=(A_{+} \cdot\mc O_M^{10}+A_{-} \cdot\mc O_M^{10})_{\Z}.
\]
As a result, we get
\[
\Lambda_D^{\max}=\vec u \cdot (A_{\max} \cdot \Z^{50})_{\mc O_M}.
\]
Consider
\begin{multline*}
\text{Col}_{\max} := \\ \{(m_i^j)_{2i+j+1}^T\in  (A_{\max}\cdot \Z^{50})_{\mc O_M}:m_i^j\ \text{satisfies}\  \eqref{eq mij},\ i=0,\dots,4,\ j=0,1\}.
\end{multline*}
Then 
\[
(\Lambda_D^{\max})^{\iota}=\vec{u}\cdot \text{Col}_{\max}.
\]
Consider the $\mc O_M$-lattice $\text{Col}_{\max}^{\text{half}}$ consisting of the $1,3,4,5,6$-th rows of $\text{Col}_{\max}$; it determines $\text{Col}_{\max}$ via the equations \eqref{eq mij}.  Also $(\text{Col}_{\max}^{\text{half}})_{\Z}$ is a free $\Z$-module of rank 25. Thus we may use again an HNF algorithm to find a matrix $B_{\max}\in\GL_{25}(\Q)$ such that 
\[(\text{Col}_{\max}^{\text{half}})_{\Z}=B_{\max}\cdot \Z^{25}.\] 
In other words, we found concrete $\Z$-basis of $\Lambda_D^{\max}$ and $(\Lambda_D^{\max})^{\iota}$ via $A_{\max}$ and $B_{\max}$.

\section{Constructing the Golden Subgroup \lm{$K_0^{\infty}$}}\label{sec:thelattice}

In this part, we  use the maximal order $\Lambda_D^{\max}$ constructed in \S\ref{sec:integralstructure} to define $K^\infty_0$ and thus $\Lambda_{0,p}$ for each $p$. We show that $K^\infty_0$ is golden by checking its consistency with \cite[Theorem A(2)]{MSG12}, as explained in the beginning of Part 2 of this article.

\subsection{Construction of \lm{$K_{0}^{\infty}$}}\label{sec:K0infty}

Define a group scheme $\ms G_0$ over $\Z$ by:
\[
\ms G_0(R):=\{g\in \Lambda_D^{\max}\otimes_{\Z}R :  \iota(g)g=1\}
\]
for any $\Z$-algebra $R$. In particular, it is an integral model of $G/\Q$.
For all $p$, we have
\[
\ms G_0(\Z_p) = G(\Q_p)\cap \Lambda_{D_p}^{\max}.
\]

\begin{prop}\label{prop model}
The following hold:
\begin{enumerate}
\item For $p \neq 2,7$, we have that $K_{0,p}:=\ms G_0(\Z_p)$ is a hyperspecial parahoric subgroup in $G(\Q_p)$. 
\item $\ms G_0(\Z_7)$ has a 
special parahoric subgroup in $G(\Q_7)$ of index-2, which we denote by $K_{0,7}$.
\item $K_{0,2}:=\ms G_0(\Z_2)$ is the unique maximal open compact subgroup in $G(\Q_2)$. 
\end{enumerate}

\end{prop}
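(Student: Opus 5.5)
We argue prime by prime, using the local description of $\Lambda_{D_p}^{\max}=\Lambda_D^{\max}\otimes_\Z\Z_p$ from Proposition \ref{prop Lambdamax} and its $\iota$-stability (Proposition \ref{prop iotastable}). The guiding principle is that the stabilizer in $G(\Q_p)$ of an $\iota$-stable maximal order of $D_p$ is the stabilizer of a $\theta$-fixed vertex of $\mc B(D_p^\times)$. By the embedding \eqref{eq Bembed}, that vertex is a point of $\mc B(G_p)$, so $\ms G_0(\Z_p)$ is its full stabilizer in $G(\Q_p)$. The work is then to identify the type of this point in each of the three cases $p\neq 2,7$, $p=7$, and $p=2$.

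\emph{Case $p\neq 2,7$.} Here $D_p$ is split. If $p$ is split in $E$, then $D_p\cong \Mat_5(\Q_p)\times\Mat_5(\Q_p)$ with $\iota$ swapping the factors. Hence $G(\Q_p)\cong\GL_5(\Q_p)$ via the first projection, and $\ms G_0(\Z_p)$ is identified with the units of the first-factor maximal order, i.e.\ a conjugate of $\GL_5(\Z_p)$, which is hyperspecial. For $p=p_0$ this uses Proposition \ref{prop Lambdamax}(2),(3) and the lemma giving $\iota(\Lambda^{\max}_{D_{\bar\rho_{p_0}}})=\Lambda^{\max}_{D_{\rho_{p_0}}}$.

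If $p$ is inert, $E_p/\Q_p$ is unramified and $\Lambda_{D_p}^{\max}\cong\Mat_5(\mc O_{E_p})$ with $\iota$ an involution of the second kind preserving it. Such an involution is $X\mapsto h^{-1}\bar X^T h$ with $h$ Hermitian, and $h$ can be taken in $\GL_5(\mc O_{E_p})$, because the order $\End(\mc O_{E_p}^5)$ is preserved exactly when $h$ lies in $\mc O_{E_p}^\times\GL_5(\mc O_{E_p})$. By the classification of unimodular Hermitian lattices over an unramified extension, $h$ is equivalent to the identity. Hence $\ms G_0$ is the smooth reductive unitary group scheme $U_5$ over $\Z_p$, and its $\Z_p$-points form a hyperspecial subgroup, using the criterion in \cite[Theorem 9.9.3]{KP23}.

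\emph{Case $p=2$.} At $p=2$ we have $D_2=D_{\rho_2}\times D_{\bar\rho_2}$ with $\iota$ swapping the factors. Thus $G(\Q_2)\cong D_{\rho_2}^\times$, and by Proposition \ref{prop Lambdamax}(1) the order $\Lambda_{D_2}^{\max}$ is the unique maximal order. Therefore $\ms G_0(\Z_2)$ is the group of valuation-zero units, which is the unique maximal compact subgroup of the anisotropic group $G(\Q_2)$.

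\emph{Case $p=7$.} This is the main obstacle. $E_7/\Q_7$ is ramified, $D_7\cong\Mat_5(E_7)$, and $\Lambda^{\max}_{D_7}=\bigoplus u^i\mc O_{L_7}$ is again the endomorphism order of the lattice $\mc O_{L_7}$, with $\iota$ induced by a Hermitian form $h$ that is unimodular on this lattice. Since $N=5$ is odd and $E_7$ is ramified, the building has type $C$-$BC_2$. The stabilizer of a self-dual lattice is a special maximal compact subgroup, and its parahoric subgroup is its intersection with the kernel of the Kottwitz map. I would compute this as follows. The special fiber of $\ms G_0$ is an orthogonal group $O_5$ over $\F_7$, obtained by reducing $h$ modulo the uniformizer to a symmetric form. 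Its component group is $\{\pm1\}$, detected by the determinant. The parahoric is the preimage of $\SO_5$, which has index $2$. The points to check carefully are that $h$ is genuinely unimodular for $\mc O_{L_7}$, which follows from $a\in\mc O_{E_7}^\times$ and the trace form on the tamely ramified part, and that $\det$ composed with reduction is surjective onto $\{\pm1\}$ on $\ms G_0(\Z_7)$. For the latter I would exhibit an explicit element, e.g.\ a norm-one element of $L_7$ with reduction $-1$, which exists by a Lemma \ref{lem H1N1trivialglobal}-type local argument. Specialness then follows from the tables in \cite{Tit79} (cf.\ \cite[\S11.2]{MSG12}).
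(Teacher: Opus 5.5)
Your argument is correct. At split primes (including $p_0$) and at $p=2$ it is the paper's argument, but at non-split primes you take a different route.

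\textbf{The paper's route.} It treats inert $p$ and $p=7$ uniformly inside the building. It uses $\mc B(G_p)=\mc B(D_p^\times)^\theta$ and reads off the affine type, $C$-$BC_2^{\mathrm{IV}}$ or $C$-$BC_2^{\mathrm{III}}$. It then invokes \cite{AN02} to place $x_0$ at the leftmost node of the local Dynkin diagram; a footnote concedes that \cite{AN02} by itself only gives ``leftmost or rightmost''. From the node it reads off hyperspecial versus special, and the orthogonal reductive quotient at $7$.

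\textbf{Your route.} You realize $\iota$ as the adjoint involution of a Hermitian form that is unimodular on the lattice whose endomorphism order is $\Lambda^{\max}_{D_p}$.
\begin{itemize}
\item In the unramified case, the classification of unimodular Hermitian lattices identifies $\ms G_0\otimes\Z_p$ with the smooth reductive $U_5$.
\item At $7$, reduction mod $\varpi_7$ gives the $O_5$ quotient, and the index-$2$ parahoric is cut out by $\mathrm{Nrd}\bmod\varpi_7$.
\end{itemize}
This is more self-contained and sidesteps the node identification. It also produces directly the criterion that the paper only records later in Remark \ref{rmk: at7}. The paper's route is shorter and needs no Hermitian-form computation.

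\textbf{Small corrections.}
\begin{itemize}
\item The normalizer of $\Mat_5(\mc O_{E_p})$ is $E_p^\times\GL_5(\mc O_{E_p})$, not $\mc O_{E_p}^\times\GL_5(\mc O_{E_p})$. So in the inert case you must first rescale $h$ by a power of $p\in\Q_p^\times$; this is harmless there.
\item At $7$, $L_7/E_7$ is unramified, since $M$ is ramified only at $p_0$. Unimodularity therefore comes from a twisted trace form $\mathrm{Tr}_{L_7/E_7}(c\,\bar x y)$ with $c\in\mc O_{M_7}^\times$, not from a ``tamely ramified part''.
\item Only the reductive quotient of the special fiber of $\ms G_0$ at $7$ is $O_5$, not the special fiber itself.
\item The element in the non-identity component can simply be $-1$, since $\mathrm{Nrd}(-1)=(-1)^5=-1$. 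No Hilbert-90-type argument is needed.
\end{itemize}
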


\begin{proof}

We have $(\Res_{\Q}^{E}D)_{\Q_p}\cong \prod_{v |  p}\Res_{\Q_p}^{E_v}D_v$, which is endowed with an involution $\theta:=\iota\circ(\cdot)^{-1}$.  Using Proposition \ref{prop Lambdamax} and \ref{prop iotastable}, $(\Lambda_{D_p}^{\max})^\times$ is a $\theta$-stable maximal open compact subgroup of $(\Res_{\Q}^{E}D^\times)(\Q_p)=D_p^{\times}$. Let $x_0$ be the $\theta$-stable vertex in the reduced building $\mc B((\Res_{\Q}^{E}D^\times)_{\Q_p})$ corresponding to  $(\Lambda_{D_p}^{\max})^\times$.

Assume $p \neq 2$ is split. Write $v$, $\bar{v}$ for the two places over $p$, and note that $D_p^{\times} = D_v^\times \times D_{\bar{v}}^\times$. Up to a change of basis, we may identify $D_v^\times$ and $D_{\bar{v}}^\times$ with $\GL_5(\Q_p)$, and
moreover the involution $\theta$ is given by
\begin{equation*}\label{eqDvinvolution}
\begin{aligned}
\theta:D_v^\times\times D_{\bar{v}}^\times&\rightarrow D_v^\times\times D_{\bar{v}}^\times,\\
(g_1,g_2)&\rightarrow(g_2^{-1\dagger},\bar g_1^{-1\dagger}).
\end{aligned}
\end{equation*}
Here, $\dagger$ denotes the conjugate-transpose of matrices. As  a maximal open compact $\theta$-stable subgroup of $D_v^\times\times D_{\bar{v}}^\times$, we may identify $(\Lambda_{D_{p}}^{\max})^\times$ with $K_v\times K_v^{-1\dagger}$, where $K_v$ (resp. $K_v^{-1\dagger}$) is a maximal compact subgroup of $D_v^\times$ (resp. $D_{\bar v}^\times$).
By projecting $D_v^\times \times D_{\bar{v}}^\times$ onto $D_v^\times$, the group $G(\Q_p)=(D_p^{\times})^\theta$ is identified with $D_{v}^\times$ and $K_{0,p}$ is identified with $K_v$. Thus $K_{0,p}$ is hyperspecial in $G(\Q_p)$.

Assume $p=2$. Similarly we have $D_2^{\times}=D_{\rho_2}^\times\times D_{\bar \rho_2}^\times$ with 
\[D_{\rho_2}=\bigoplus_{i=0}^4u^iL_{\rho_2}\quad\text{and}\quad D_{\bar \rho_2}=\bigoplus_{i=0}^4u^iL_{\bar \rho_2}.\]
In particular, we have $D_{\bar \rho_2}\cong D_{ \rho_2}^{\text{opp}}$. We define the anti-homomorphism
\[\dagger:D_{\rho_2}\rightarrow D_{\bar \rho_2},\quad (l\in L_{\rho_2},u)\mapsto (\bar l \in L_{\bar \rho_2},u^{-1}).\]
Then the involution $\theta$ is given by 
\begin{equation*}
\begin{aligned}
\theta:D_{\rho_2}^\times\times D_{\bar \rho_2}^\times&\rightarrow D_{ \rho_2}^\times\times D_{\bar\rho_2}^\times,\\
(g_1,g_2)&\rightarrow(g_2^{-1\dagger},g_1^{-1\dagger}).
\end{aligned}
\end{equation*}
Under this setting, $\Lambda_{D,2}^{\max}$ is identified with the product of orders $\mc O_{D_{\rho_2}}\times \mc O_{D_{\bar \rho_2}}$. Still using the projection $D_{\rho_2}^\times\times D_{\bar \rho_2}^\times\onto D_{\rho_2}^\times$, the group $G(\Q_2)=(D_2^{\times})^\theta$ is identified with $D_{\rho_2}^\times$ and $K_{0,2}$ is identified with $\mc O_{D_{\rho_2}}^{\times}$. Thus $K_{0,2}$ is the only maximal compact subgroup of $G(\Q_2)$. 

Assume $p$ is non-split. Then  $(\Res_{\Q}^{E}D^\times)_{\Q_p}\cong \Res_{\Q_p}^{E_v}D_v^\times\cong \Res_{\Q_p}^{E_v}\GL_5$ and $G(\Q_p) = (D_p^{\times})^\theta$ is a unitary group.  Since $p\neq 2$, we have the identification of sets $\mc B(G(\Q_p) )=\mc B((\Res_{\Q}^{E}D^\times)_{\Q_p})^\theta$ and we may realize $x_0$ as a vertex in $\mc B(G(\Q_p) )$.  The affine root system of $\mc B(G(\Q_p) )$ is given in \cite[\S 4.3, \S 4.4]{Tit79}: it is of type $\mathrm{C}$-$ \mathrm{BC}_2^{\mathrm{IV}}$ if $E_v/\Q_p$ is unramified, and of type $\mathrm{C}$-$ \mathrm{BC}_2^{\mathrm{III}}$ if $E_v/\Q_p$ is ramified (here we are referring to the labels in \cite[p29-p30, Tableau]{BT72} for the types of affine roots). Using \cite[Sections 6 and 8]{AN02}, we see that $x_0$, realized as a vertex in $\mc B(G(\Q_p))$, corresponds to the leftmost node in the affine Dynkin diagram.\footnote{Indeed, the general result in \cite[Remark 15]{AN02} only deduces that $x_0$ corresponds either to the leftmost or the rightmost node (the authors did not study the direction of the arrows). But in our case, one could generalize their calculation to verify that $x_0$ must correspond to the leftmost node.} As a result, $x_0$ is hyperspecial if $E_v/\Q_p$ is unramified ($p\neq 7$), and special if  $E_v/\Q_p$ is ramified ($p=7$). Thus, in the unramified case ($p\neq 7$),  $K_{0,p}:=\ms G_0(\Z_p)$ is the hyperspecial parahoric subgroup of $G(\Q_p) $ related to $x_0$ with  special fiber being a unitary group. In the ramified case ($p=7$), $\ms G_0(\Z_7)$ is not connected with the reductive quotient of the special fiber being an orthogonal group. It has an index-2 connected special parahoric subgroup denoted by $K_{0,7}$.

\begin{figure}[h!]
    \centering
\begin{tikzpicture}

    \node[circle, draw, fill=black, inner sep=2pt, label=below:{$\mathrm{hs}$}] (0) at (0,0) {};
    \node[circle, draw, fill=black, inner sep=2pt, label=below:{}] (1) at (2,0) {};
    \node[circle, draw, fill=black, inner sep=2pt, label=below:{$\mathrm{s}$}, label=above:{$\times$}] (2) at (4,0) {};
    
    \node[circle, draw, fill=black, inner sep=2pt, label=below:{$\mathrm{s}$}] (3) at (7,0) {};
    \node[circle, draw, fill=black, inner sep=2pt, label=below:{}] (4) at (9,0) {};
    \node[circle, draw, fill=black, inner sep=2pt, label=below:{$\mathrm{s}$}] (5) at (11,0) {};
    
    \draw[thick, double, -] (0) -- (1) node[midway] {$>$}; 
    \draw[thick, double, -] (1) -- (2) node[midway] {$>$};
    \draw[thick, double, -] (3) -- (4) node[midway] {$>$}; 
    \draw[thick, double, -]  (4) -- (5) node[midway] {$>$}
    ; 
\end{tikzpicture}
\caption{Affine Dynkin Diagram of type $\mathrm{C}$-$ \mathrm{BC}_2^{\mathrm{IV}}$ and $\mathrm{C}$-$ \mathrm{BC}_2^{\mathrm{III}}$}
\label{fig:affine_dynkin_c_bc_2}
\end{figure}
\end{proof}

\begin{rmk}\label{rmk: at7}

Let $\rho_7$ be the place of $E$ over $7$. Then, given $x\in\mc G_0(\Z)$, we have that $\mathrm{Nrd}_{D/E}(x)\equiv \pm 1 \pmod{\rho_7}$, depending on whether $x$ lies in the identity connected component of $\ms G_0(\Z_7)$.
    
\end{rmk}

From now on, we fix the parahoric subgroups $K_{0,p}$ as in the above proposition. We define $K_{0,p}^{\der}=G_p^{\der}\cap K_{0,p}$ for each $p$.

\begin{prop}\label{prop:coherent}

The subgroups $K^{\der}_{0,p}$ of $G_{p}^{\der}$ for all $p$ form a coherent set of parahoric subgroups satisfying the requirements on page \pageref{parahoricconditions}.

\end{prop}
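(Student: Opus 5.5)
The plan is to verify the four requirements listed on page \pageref{parahoricconditions} one at a time, using Proposition \ref{prop model} together with the general fact that passing from $G_p$ to $G_p^{\der}$ preserves the building. Since $G^{\der}_p\hookrightarrow G_p$ has the same reduced building up to the central direction, intersecting a parahoric subgroup of $G_p$ with $G_p^{\der}$ gives a parahoric subgroup of $G_p^{\der}$ attached to the same facet. One caveat: we need connectedness of the special fiber, which could fail for the derived group. For $G^{\der}$, which is simply connected of type $A_4$, the Kottwitz map is trivial, so the full stabilizer of a facet in $G_p^{\der}$ is already parahoric. Hence $K^{\der}_{0,p}=G^{\der}_p\cap K_{0,p}$ will be the parahoric subgroup of $G_p^{\der}$ attached to the vertex $x_0$ of Proposition \ref{prop model}.

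\textbf{Coherence.} Take $\ms G_0^{\der}$ to be the schematic closure of $G^{\der}$ in $\ms G_0$, which is defined by the additional equation $\mathrm{Nrd}=1$ inside $\Lambda_D^{\max}$. Then $\ms G_0^{\der}(\Z_p)=G^{\der}_p\cap\ms G_0(\Z_p)=K^{\der}_{0,p}$ for all $p\neq 2,7$ by Proposition \ref{prop model}(1). At $p=7$ the two sides differ only at the single prime where $\ms G_0(\Z_7)\neq K_{0,7}$. This is harmless because coherence is only required for almost all $p$.

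\textbf{Local conditions.} At $p=2$, Proposition \ref{prop model}(3) identifies $G_2$ with $D_{\rho_2}^\times$ and $K_{0,2}$ with $\mc O_{D_{\rho_2}}^\times$. Hence $K^{\der}_{0,2}=\SL_1(D_{\rho_2})$, and this is the unique parahoric subgroup, since $\SL_1(D_{\rho_2})$ is compact. For $p\neq 2,7$, hyperspecial vertices of $G_p$ and of $G_p^{\der}$ coincide, since being hyperspecial is a property of the facet: the reductive quotient of the special fiber only changes by a central torus. So $K^{\der}_{0,p}$ is hyperspecial. It lies in the unique conjugacy class of hyperspecial subgroups because, in the split case ($\SL_5$) and in the unramified unitary case, all hyperspecial vertices form a single $G_{\ad}$-orbit. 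I would still want to confirm that they also form a single $G^{\der}$-orbit, or else note that \cite{MSG12} only requires conjugacy under $G_{\ad}$. At $p=7$, Proposition \ref{prop model}(2) and its proof show that $x_0$ is the leftmost, special node of the $\mathrm{C}$-$\mathrm{BC}_2^{\mathrm{III}}$ diagram. Therefore $K_{0,7}\cap G^{\der}_7$ is a special parahoric subgroup, and by the computation above it equals $\ms G_0^{\der}(\Z_7)$ intersected with the parahoric.

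\textbf{Main obstacle.} I expect the hardest step to be the local analysis at $7$. There I have to check that intersecting with $G^{\der}$ does not pick up the non-identity component, that is, that the determinant condition of Remark \ref{rmk: at7} is automatic on the derived group, or at least compatible with it. I also have to identify which of the two special conjugacy classes of \cite[\S11.2]{MSG12} occurs. The requirement as stated only asks for one of the two classes, so it should suffice to show that the vertex is special, which is already established in Proposition \ref{prop model}(2).
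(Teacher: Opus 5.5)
Your proposal is correct and is essentially the paper's proof: the same case split ($p=2$, split $p$, non-split $p$ including $p=7$) showing that $K_{0,p}\cap G^{\der}_p$ is the unique parahoric at $2$, hyperspecial at $p\neq 2,7$, and special at $7$, together with coherence via the subscheme of $\Lambda_D^{\max}$ cut out by $\iota(g)g=1$ and $\det g=1$. Your Kottwitz-map remark makes explicit what the paper only asserts when it says that the intersection with $\SU_5(\Q_p)$ is $\SU_5(\Z_p)$.

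The step you flag at $7$ closes immediately. Since $\pi_1(G^{\der})=0$, functoriality gives $G^{\der}(\Q_7)\subseteq\ker\kappa_G$; concretely, reduced norm $1$ forces the identity component, as in Remark \ref{rmk: at7}. Hence $\ms G_0(\Z_7)\cap G^{\der}_7=K_{0,7}\cap G^{\der}_7=\mathrm{Stab}_{G^{\der}_7}(x_0)$, and the two sides do not actually differ at $7$. Also, in the split case the hyperspecial vertices form five $\SL_5(\Q_p)$-orbits, so the ``unique conjugacy class'' condition must be read up to $G_{\ad}$-conjugacy, as you suggest.
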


\begin{proof}

For all prime $p$, we need to show that the group $K_{0,p} \cap G^\der(\Q_{p})$ is a related hyperspecial or special parahoric subgroup of $G^\der(\Q_{p})$, which follows from a case-by-case discussion: 
\begin{itemize}

\item When $p=2$, we have $G(\Q_{2})=U_1(D_{\Q_2})$ and $G^\der(\Q_{2})=\SU_1(D_{\Q_2})$ and the claim is clear;

\item When $p$ is split, we have $G(\Q_{p})=\GL_5(\Q_{p})$ and $G^\der(\Q_{p})=\SL_5(\Q_{p})$ and the claim is also clear;

\item When $p$ is inert or ramified, we may without loss of generality assume $G^\der(\Q_{p})=U_5(\Q_{p})$ and $G^\der(\Q_{p})=\SU_5(\Q_{p})$ related to the unitary involution $\theta$ defined by identity matrix. From our choice, the parahoric subgroup $K_{0,p}$ corresponds to the connected integral model $U_5(\Z_{p})$, since it is the one related to the $\theta$-stable hyperspecial parahoric subgroup $\GL_5(\mc O_{E_{p}})$ of $\GL_5(E_{p})$. Its intersection with $\SU_5(\Q_{p})$ being $\SU_5(\Z_{p})$ gives the related special or hyperspecial parahoric subgroup.

\end{itemize} 
Finally, let $\ms G_0^{\der}$ be the group scheme over $\Z$ such that
\[
\ms G_0^{\der}(R):=\{g\in \Lambda_D^{\max}\otimes_{\Z}R :  \iota(g)g=1,\ \det(g)=1\}.
\]
Then for $p$ split or inert, we have
$\ms G_0^{\der}(\Z_p)=K^{\der}_{0,p}$. Hence we get a coherent set.
\end{proof}

\subsection{Simply transitive action of \lm{$\Lambda_{0,p}$}}

For each $p \neq 2$, define
\[
\Lambda_{0,p} := K_{0}^{\infty, p} \cap G(\Q) \into G_p,\quad \text{where}\ K_0^{\infty, p}=\prod_{p'\neq p}K_{0,p'}. 
\]
Then by definition we have $\Lambda_{0,p}^{\der}=\Lambda_{0,p}\cap G^{\der}(\Q)$.
Let $\Lambda_{0,p, \ad}$ be the image of $\Lambda_{0,p}$ in $G_{\ad,p}$\footnote{Beware that this may be different from the previously defined $\bar \Lambda_{0,p}$ since it is a quotient by the entire center instead of just the part that is split at $p$.}. If furthermore $p$ is split with respect to $E/\Q$, we identify $(\ms G_0)_{\Z_p}$ with the group scheme $ \GL_5$ over $\Z_p$.

To translate \cite{MSG12} to our case of forms of $\GL_5$ (instead of $\SL_5$), we need the following lemma and its corollary:
\begin{lem}\label{lem Lambdapsimtran}
There exists a split prime $p$ such that

\begin{enumerate}
\item We have $N_{\PGL_5(\Q_p)} (\Lambda_{0,p}^\der) = \Lambda_{0,p, \ad}$.
\item $\Lambda_{0,p, \ad}$ acts simply transitively on the set of vertices $\mc B^0(G_p)$ of $\mc B(G_p)$.
\item $\Lambda_{0,p}$ acts transitively on $G_p/K_{0,p}$. 
\end{enumerate}
 \end{lem}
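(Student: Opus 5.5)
The plan is to take (2) as the input from \cite[Theorem A(2)]{MSG12}, and then deduce (1) and (3) from it by comparing $G$ with $G^{\der}$ and $G_{\ad}$. By Proposition~\ref{prop:coherent}, the parahorics $K^{\der}_{0,p}$ form a coherent family of exactly the type required there. Hence for every split $p$ the normalizer $N_{G_{\ad,p}}(\Lambda^{\der}_{0,p})$ acts simply transitively on $\mc B^0(G_p)$. Since $G_{\ad,p}=\PGL_5(\Q_p)$ for split $p$, the whole content of (1) and (2) is the identification of this normalizer with $\Lambda_{0,p,\ad}$. Once that equality holds, (2) is immediate.

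For (1) I will prove both inclusions. The inclusion $\Lambda_{0,p,\ad}\subseteq N(\Lambda^{\der}_{0,p})$ is formal: $\Lambda_{0,p}$ normalizes $G^{\der}(\Q)$ and each $K_{0,p'}$, so it normalizes $\Lambda^{\der}_{0,p}=\Lambda_{0,p}\cap G^{\der}(\Q)$.

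For the reverse inclusion, let $n$ be in the normalizer. Because $\Lambda^{\der}_{0,p}$ is Zariski dense in $G^{\der}$ (Borel density, as $G^{\der}_p$ is isotropic at the split place $p$), conjugation by $n$ is a $\Q$-rational automorphism of $G^{\der}$ and of $D$. So $n\in G_{\ad}(\Q)$. By Skolem--Noether it lifts to some $g\in D^\times$ with $\iota(g)g=c\in\Q^\times$, i.e. $g$ is a unitary similitude.

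Now use the structure of $G_{\ad}(\Q)$ versus $G(\Q)/Z_G(\Q)$. The obstruction lies in $H^1(\Q,Z_G)$, with $Z_G=\Res^1_{E/\Q}\mathbb G_m$. Modulo $E^\times$, rescaling $g$ changes $c$ by a norm from $E$. So one needs $c\in N_{E/\Q}(E^\times)$, which can be checked locally by the Hasse norm theorem. At the real place $c>0$ because $G$ is definite. At $p'\neq p$ the element $n$ normalizes the local closure $K^{\der}_{0,p'}$, and I expect this to force $c$ to be a local norm. Lemma~\ref{lem H1N1trivialglobal} and Lemma~\ref{lem:Gcenter} are used to pass between $N^1$-scalars and reduced-norm-one elements when adjusting $g$.

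After rescaling, $g\in G(\Q)$. It still remains to show that $g$ can be multiplied by a central element of $Z_G(\Q)=N^1(E)$ so that $g\in K_{0,p'}$ for all $p'\neq p$. At places of type $p'\neq 2,7$ this uses that the normalizer of a hyperspecial parahoric of $\SL_5$ or $\SU_5$ inside $G_{p'}$ is $Z\cdot K_{0,p'}$. At $2$ it is automatic, since $K_{0,2}$ is the unique maximal compact subgroup. At $7$ one must handle the index-$2$ issue from Remark~\ref{rmk: at7} and the fact that the special parahoric there is not extra-special. The resulting local central corrections must then be globalized by an element of $N^1(E)$, which is a class-number-type statement for $N^1(E)$. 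I expect this step to be the main obstacle, together with the place $7$. The lemma only asserts existence of some split $p$, so I would choose $p$ to make these local conditions easy, or verify the finitely many obstructions numerically.

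For (3), take $g\in G_p$. By (2) there is $\lambda\in\Lambda_{0,p}$ whose image in $\PGL_5$ sends the vertex $x_0=K_{0,p}$ to the image of $gx_0$ in the reduced building. Then $\lambda^{-1}g$ fixes $x_0$ modulo the center, so $\lambda^{-1}g\in Z_{G_p}K_{0,p}$, with $Z_{G_p}=\Q_p^\times$. Thus it remains to show $Z_{G_p}\subseteq\Lambda_{0,p}K_{0,p}$, i.e. that some element of $\Lambda_{0,p}$ has determinant of valuation exactly $1$ at $p$. I will produce this from the central element $\rho_p/\bar\rho_p\in N^1(E)$, where $p=\rho_p\bar\rho_p$ in $\mc O_E$ (possible since $\mc O_E$ is a PID). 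This element is a unit away from $p$, hence lies in $\Lambda_{0,p}$. Its image in $G_p\cong\GL_5(\Q_p)$ is a scalar of valuation $\pm1$, which generates $Z_{G_p}/(Z_{G_p}\cap K_{0,p})$.
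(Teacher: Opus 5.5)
\paragraph{Where the proposal matches the paper, and where it breaks.} Your argument for (3) is the paper's: use transitivity on vertices from (2), then correct by a central power of $\rho_p/\bar\rho_p$. To land in $K_{0,7}$ you also need $\rho_p/\bar\rho_p\equiv 1 \pmod{\rho_7}$, since being a unit is not enough there; this holds because $\rho_p-\bar\rho_p\in\sqrt{-7}\,\mc O_E$. Like the paper, you also deduce (2) from (1) together with \cite[Theorem A(2)]{MSG12}.

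The gap is the reverse inclusion in (1). You only sketch it, and it fails exactly at the place $2$. Your claim that $2$ is automatic is wrong:
\begin{itemize}
\item $K^{\der}_{0,2}=\SL_1(D_{\rho_2})$ is normal in $G_2\cong D_{\rho_2}^\times$, so normalizing it imposes no condition.
\item Meanwhile $G_2/Z_{G_2}K_{0,2}\cong\Z/5$ via the $D_{\rho_2}$-valuation mod $5$, and no element of $N^1(E)$ can change that class.
\end{itemize}
Your own tools produce such elements. The map $\mathrm{Nrd}\colon G(\Q)\to N^1(E)$ is surjective (Hasse principle for $G^{\der}$ plus surjectivity at $\R$), so take $g_0$ with $\mathrm{Nrd}(g_0)=\rho_2/\bar\rho_2$. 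This is a unit at every odd prime and is $\equiv 1 \pmod{\rho_7}$. Strong approximation for $G^{\der}$ away from $p$ then gives $h\in G^{\der}(\Q)$ with $g=g_0h\in K_{0,q}$ for all $q\neq 2,p$. This $g$ normalizes $\Lambda^{\der}_{0,p}$ and is type-preserving at $p$. But its $D_{\rho_2}$-valuation is $1$, whereas every element of $Z_G(\Q)\Lambda_{0,p}$ has valuation in $5\Z$. So the image of $g$ lies in $N_{\PGL_5(\Q_p)}(\Lambda^{\der}_{0,p})$ but not in $\Lambda_{0,p,\ad}$. By contrast, the similitude step you worried about costs nothing: $c^5=N_{E/\Q}(\mathrm{Nrd}\,g)$ and $c^4=N_{E/\Q}(c^2)$.

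\paragraph{How the paper proves (1), and why the statement itself is in doubt.} The paper never argues element by element. It imports from \cite[Proposition 30]{MSG12} that the image of $\Lambda^{\der}_{0,p}$ has index $5$ in the normalizer. It then only needs one $l_0\in\Lambda_{0,p}$ whose determinant at $p$ is a uniformizer. It builds $l_0$ as a norm-one element of $L$ with $N_{L/E}(l_0)=(r+s\sqrt{-7})/(r-s\sqrt{-7})$, using Lemma \ref{lem H1N1trivialglobal}; this is where the special choice of $p$ enters. That index input plus a single type-rotating element is the idea your proposal lacks.

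Be aware, though, that $g$ and $l_0$ give independent classes in $\Z/5\times\Z/5$, namely $v_p\circ\det$ and the $D_{\rho_2}$-valuation, both mod $5$. Both classes vanish on $\Lambda^{\der}_{0,p}$, so the \emph{group} index of the image of $\Lambda^{\der}_{0,p}$ in the normalizer is at least $25$. The quoted $5$ therefore cannot be the group-theoretic index; Borel--Prasad-type indices are covolume-normalized. It follows that (1), as an equality of groups, appears to fail, so you could not have completed that step.

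\paragraph{Parts (2) and (3) survive by another route.} Prasad's volume formula gives $\zeta(2)L(3,\chi)\zeta(4)L(5,\chi)\cdot 7^{7}\cdot 288/(2\pi)^{14}=1/315$, and the factor $315$ at $2$ brings the mass of $\Lambda^{\der}_{0,p}$ on the hyperspecial vertices of a fixed type to $1$. The only torsion in $G(\Q)$ is $\pm1$, so $\Lambda^{\der}_{0,p}$ acts simply transitively on each vertex type. Adjoining an element of $\Lambda_{0,p}$ with $v_p(\det)=1$ then rotates the types, which gives (2) and hence (3).
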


\begin{proof}
\underline{Claim (1)}: 

By the table in \cite[Proposition 30]{MSG12}, the image of $\Lambda_{0,p}^\der$ is an index-$5$ subgroup of $N_{\PGL_5(\Q_p)} (\Lambda_{0,p}^\der )$. Since $\Lambda_{0,p}$ normalizes $\Lambda_{0,p} ^\der$, we have $\Lambda_{0,p, \ad} \subset N_{\PGL_5(\Q_p)} (\Lambda_{0,p}^{\der})$. Thus, we only need to show that the images of $\Lambda_{0,p}$ and $\Lambda_{0,p}^\der$  in $\PGL_5(\Q_5)$ do not coincide, or a fortiori $\Lambda_{0,p}$ as a subgroup of $G(\Q_p)=\GL_5(\Q_p)$
is not in $\Q_p^{\times} G^\der(\Q_p)=\Q_p^{\times} \SL_5(\Q_p)$.

We next choose $p$ to satisfy:
\begin{itemize}

\item $p$ is split with respect to the field extension $L/\Q$;

\item Write $p=r^2+7s^2$ for $r,s\in\Z$, then $(r+s\rho)/(r-s\rho)$ is a 5th power in $\Z/p_0$, where $\rho\in\Z/p_0$ such that $\rho^2\equiv -7$ (mod $p_0$).

\end{itemize}
For example, when $p_0=11$ we have
\[
p= 947 = 10^2 + 7\cdot 11^2, \quad \sqrt{-7} \equiv 2 \pmod{11}, \quad \f{10 + 2\cdot11}{10 - 2 \cdot 11} \equiv 1^5 \pmod{11}
\] 

Next, we claim that $x_0=(r+s\sqrt{-7})/(r-s\sqrt{-7})$ is a norm from $L$ to $E$. By the Hasse norm theorem, it suffices to check this locally. For any prime $v$ of $E$, 
\begin{itemize}
\item if $v$ is not over $p_0$ or $p$, then $x_0$ is in $\mc O_{E_v}^\times$ and thus a norm element with respect to $L_v/E_v$;
\item if $v$ is over $p$, then $x_0$ is a norm element with respect to the split extension $L_v/E_v$;
\item if $v$ is over $p_0$, then $x_0\in \mc O_{E_v}^\times$ is a norm element with respect to the totally tamely ramified extension $L_v/E_v$ since it is a 5th power modulo $1+\mf p_{E_v}$.
\end{itemize}
Let $l_0\in L$ such that $N_{L/E}(l_0)=x_0$. By  Lemma \ref{lem H1N1trivialglobal}, we may further assume that $l_0\bar l_0=1$. Then we realize $l_0$ as an element in $G(\Q)$. Since $N_{L/E}(l_0)=x_0\in\mc O_E[1/p]$, by definition we have $l_0\in \Lambda_{D_v}^{\max}$ for $v$ not over $p$. Moreover, from the expression of $x_0$, we have
\[
N_{L/E}(l_0)=x_0=(r+s\sqrt{-7})/(r-s\sqrt{-7})\equiv 1 \pmod{\rho_7}.
\]
In total, $l_0\in G(\Q)\cap K_0^{\infty,p}=\Lambda_{0,p}$.
Since the determinant of $l_0$ in $G_p$ is a uniformizer, we have $l_0\notin\Q_p^{\times} \SL_5(\Q_p)$ which completes the proof of (1).

\noindent \underline{Claim (2):} 

This follows from \cite[Theorem A(2)]{MSG12} and Proposition \ref{prop:coherent}.

\noindent \underline{Claim (3):} 

Let $\mc B^0(G_p)$ be the set of vertices in $\mc B(G_p)$. We consider the map 
\[
G_p/K_{0,p}\mapsto \mc B^0(G_p),\quad g_pK_{0,p}\mapsto g_p\cdot x,
\]
where $x$ is the vertex with related parahoric group being $K_{0,p}$. Since $\Lambda_{0,p}$ acts transitively on $\mc B^0(G_p)$, we have that given any $g_pK_{0,p}$, there exists $\lambda\in\Lambda_{0,p}$ such that $\lambda\cdot x=g_p\cdot x$, or equivalently, $g_p\lambda^{-1}z_p^{-1}\in K_{0,p}$ for some $z_p\in Z(G_p)$. Moreover,  we may pick an element $z\in E^{\times}$ such that \begin{itemize}
\item $z\bar z=1$;
\item both $z$ and $z^{-1}$ are integral outside $p$;
\item $zK_{0,p}=z_p K_{0,p}$.
\end{itemize}
Regarding $z$ as an element in $G(\Q)$, from our construction we indeed have that $z\in\Lambda_{0,p}$. Thus we have $g_p K_{0,p}=\lambda z K_{0,p}$. On the other hand, if $g_1 K_{0,p}=g_2 K_{0,p}$ for $g_1,g_2 \in \Lambda_{0,p}$, then by (2)  $g_1g_2^{-1}$ fixes the vertex corresponding to $K_{0,p}$ and thus is trivial in $\Lambda_{0,p, \ad}$. Therefore $g_1$ and $g_2$ differ by a central element in $G_p$.
\end{proof}

\begin{cor}

The group $K_{0}^{\infty}$ is golden. In particular, for all $p$, $\Lambda_{0,p}$ acts transitively on $G_p/K_{0,p}$ and $\bar \Lambda_{0,p}$ acts simply transitively on the $G_p\cdot x_p$, where $x_p$ denotes the vertex in $\mc B^0(G_p)$ corresponding to $K_{0,p}$.

\end{cor}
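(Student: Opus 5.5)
The plan is to verify the two defining conditions of ``golden'' separately: class number one, $G(\A^\infty)=K_0^\infty G(\Q)$, and triviality of $K_0^\infty\cap G(\Q)$. The ``in particular'' statements then follow formally from Lemma~\ref{lem:simpletransitiveaction}, after passing to the quotient $\bar\Lambda_{0,p}$ by the split part of the center. Throughout, I fix the auxiliary split prime $p$ provided by Lemma~\ref{lem Lambdapsimtran}.

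\emph{Class number one.} By Lemma~\ref{lem Lambdapsimtran}(3) we have $G_p=\Lambda_{0,p}K_{0,p}$. It therefore suffices to show
\[
G(\A^\infty)=G(\Q)\,G_p\,K_0^{\infty,p}.
\]
I would prove this by separating $G$ into its derived group and its cocenter, via the reduced norm $\mathrm{Nrd}\colon G\to T:=\{x\in\Res^E_\Q\mathbb G_m : x\bar x=1\}$.

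For the derived part, $G^\der$ is simply connected (a form of $\SU_5$), and $G^\der_p\cong\SL_5(\Q_p)$ is non-compact. Strong approximation therefore gives $G^\der(\A^\infty)=G^\der(\Q)\,G^\der_p\,K_0^{\der,\infty,p}$.

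For the torus part, take $g\in G(\A^\infty)$. I would show that $\mathrm{Nrd}(g)\in T(\Q)\,T_p\,\mathrm{Nrd}(K_0^{\infty,p})$. This uses that $\mc O_E$ is a PID, so that $T(\A^\infty)=T(\Q)\,T_p\prod_{\ell\neq p}T(\Z_\ell)$. One also needs $\mathrm{Nrd}(K_{0,\ell})=T(\Z_\ell)$ for $\ell\neq 7$, using the surjectivity of norms in unramified extensions. At $\ell=7$, where $\mathrm{Nrd}(K_{0,7})$ is only the index-$2$ subgroup of Remark~\ref{rmk: at7}, I would absorb the discrepancy using a global element of $T(\Q)$ that is a unit away from $p$. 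This is the element $x_0=(r+s\sqrt{-7})/(r-s\sqrt{-7})$ from the proof of Lemma~\ref{lem Lambdapsimtran}, or a variant with the opposite residue at $\rho_7$.

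The resulting global $t\in T(\Q)$ must then be lifted to $G(\Q)$. This is exactly what Lemma~\ref{lem H1N1trivialglobal} provides: an $l\in L$ with $l\bar l=1$ and $N_{L/E}(l)=t$. Local components are lifted similarly inside $K_{0,\ell}$ or $G_p$. After dividing $g$ by these lifts, what remains lies in $G^\der(\A^\infty)$, and strong approximation finishes the argument. The main obstacle is this careful bookkeeping at $7$: one must produce a global norm-one unit, integral away from $p$, whose reduced norm has nontrivial sign modulo $\rho_7$.

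\emph{Triviality of $K_0^\infty\cap G(\Q)$.} Let $\gamma\in G(\Q)\cap K_0^\infty$. Then $\gamma\in\Lambda_{0,p}$ and $\gamma$ fixes the vertex of $K_{0,p}$. By Lemma~\ref{lem Lambdapsimtran}(2), its image in $\Lambda_{0,p,\ad}$ is trivial, so $\gamma$ is central. By Lemma~\ref{lem:Gcenter}, $\gamma\in N^1(E)$. Since $\gamma$ is integral at every place, it is a root of unity in $\Q(\sqrt{-7})$, hence $\gamma=\pm1$. Finally, $\mathrm{Nrd}(-1)=-1\not\equiv 1\pmod{\rho_7}$, so by Remark~\ref{rmk: at7} we have $-1\notin K_{0,7}$. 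Hence $\gamma=1$ and $K_0^\infty$ is golden.

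Applying Lemma~\ref{lem:simpletransitiveaction} at every $p$ then gives transitivity of $\Lambda_{0,p}$ on $G_p/K_{0,p}$. Simple transitivity of $\bar\Lambda_{0,p}$ on $G_p\cdot x_p$ follows as in the constructions of \S\ref{sssec:splitconstruction}--\ref{sssec:nonsplitconstruction}, since the stabilizer of $x_p$ in $\Lambda_{0,p}$ is $\Lambda_{0,p}\cap K_{0,p}Z^\spl_{G_p}$.
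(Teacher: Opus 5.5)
Your argument is correct. The part showing $G(\Q)\cap K_0^\infty=\{1\}$ is exactly the paper's argument. There, $\gamma$ is central by Lemma~\ref{lem Lambdapsimtran}(2), lies in $N^1(E)$ by Lemma~\ref{lem:Gcenter}, is $\pm1$ by integrality, and $-1\notin K_{0,7}$ by Remark~\ref{rmk: at7}.

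The difference is in the class-number-one step. The paper combines Lemma~\ref{lem Lambdapsimtran}(3) with the general criterion \cite[Corollary 2.2.4]{DEP} to get $G(\Q)K_0^\infty G_\infty=G(\A)$. You instead prove the needed input $G(\A^\infty)=G(\Q)G_pK_0^{\infty,p}$ by hand. You use strong approximation for the simply connected $G^\der$, which is non-compact at the split prime $p$, together with a computation on the cocenter $T=U_1$ via $\mathrm{Nrd}$. You then conclude using $G_p=\Lambda_{0,p}K_{0,p}$ and $\Lambda_{0,p}\subseteq K_0^{\infty,p}$. Your route is self-contained and makes explicit which arithmetic facts are used:
\begin{itemize}
\item $\mc O_E$ is a PID;
\item local reduced norms surject onto the unit groups at each $K_{0,\ell}$, except for the index-$2$ defect at $7$;
\item Lemma~\ref{lem H1N1trivialglobal} lets you lift elements of $T(\Q)$ to $G(\Q)$.
\end{itemize}
The paper's version is a two-line citation that leaves these checks inside the reference.

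There are two minor corrections.

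First, the element $x_0=(r+s\sqrt{-7})/(r-s\sqrt{-7})$ does not fix the discrepancy at $7$, since $x_0\equiv 1\pmod{\rho_7}$; that is exactly why the paper uses it to land in $K_{0,7}$. The issue is trivial anyway. The element $-1\in T(\Q)$ is a unit everywhere, satisfies $-1\not\equiv 1\pmod{\rho_7}$, and lifts to $-1\in G(\Q)$ with $\mathrm{Nrd}(-1)=-1$. So your ``main obstacle'' disappears.

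Second, your closing claim that the stabilizer of $x_p$ in $\Lambda_{0,p}$ is $\Lambda_{0,p}\cap K_{0,p}Z^\spl_{G_p}$ holds only for $p\neq 2,7$. At $p=7$ the full vertex stabilizer is $\ms G_0(\Z_7)\supsetneq K_{0,7}$. Also $-1\in\Lambda_{0,7}$ fixes $x_7$, and $Z^\spl_{G_7}$ is trivial, so simple transitivity on $G_7\cdot x_7$ actually fails. This is a flaw in the ``for all $p$'' of the statement itself; the paper's appeal to \cite[Lemma 4.1.2(2)]{DEP} has the same issue. It is harmless, since only $p\neq 2,7$ is used afterwards.
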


\begin{proof}
Given the first statement, the second follows from \cite[Lemma 4.1.2(2)]{DEP}. 

For the first statement, Lemma \ref{lem Lambdapsimtran}(3) and \cite[Corollary 2.2.4]{DEP} gives that $G(\Q) K_0^\infty G_\infty = G(\A)$. Lemma \ref{lem Lambdapsimtran}(2) gives that $G(\Q) \cap K_0^\infty \subseteq Z_G(\Q)$. Therefore it suffices to show that $K_0^\infty \cap Z_G(\Q) = 1$. 

By Lemma \ref{lem:Gcenter}, $Z_G(\Q) = N^1(E)$. We verify that at each local place $v$ of $E$ over a prime number $p$, the element $x\in N^1(E)$ is in $\mc O_{E_v}^{\times}$. When $p$ is split with respect to $E/F$, we have that $x\in E_v^{\times} \cap K_p=\mc O_{E_v}^{\times}$. When $p$ is not split, we have that the valuations of $x$ and $\bar x$ at $v$ are both 0, meaning that $x\in \mc O_{E_v}^{\times}$. As a result, $x\in \mc O_{E}^{\times}=\{\pm1\}$. Finally, since $x$ lies in $K_7$, we must have $x=1$ (\emph{cf.} Remark \ref{rmk: at7}).

\end{proof}

\section{Gate Sets for \lm{$G$}}\label{sec:gatesets}

The last piece we need to run the constructions of Proposition \ref{prop:explicitramanujan} is the computation of the gate sets $S_{\Lambda_{0,p}}$. In this section, we will often use boldface to represent elements of $\Mat_{5 \times 5}(L)$ or elements of $G$ interpreted as elements of $\Mat_{5 \times 5}(L)$ through the map $\eta$ from \eqref{eq Dembedding}. We will also use the shorthand $\m A^\dagger := \bar{\m A}^T$.

\subsection{Explicit Trivializations}\label{sec: Explicit Tri}
We will need explicit trivializations (i.e., explicit isomorphisms to standard matrix algebras) of $\Lambda_D^{\max}$ and $K_0^\infty$ locally and mod $n$. 

\subsubsection{Local Trivializations of $\Lambda_D^{\max}$}\label{sssec:localtriofDmax}

Let $q \neq 2,7$. Assume first $q\neq p_0$ and let us construct explicit isomorphisms $\Lambda_{D,q}^{\max} \to \Mat_{5 \times 5}(\mc O_{E_q})$.  

Note first that
\[
\Lambda_{D,q}^{\max}=\Lambda_{D,q}=\bigoplus_{i=0}^4u^i\mc O_{L_q},\quad u^5=a.
\]
Since $L/E$ is either split or unramified at places of $E$ over $q$ and $a\in\mc O_{L_q}^{\times}$, we may find $\gamma_q\in\mc O_{L_q}^{\times}$ such that $N_{L_q/E_q}(\gamma_q)=a$. Define
\[
\m U_q =\mathrm{diag}(1,\gamma_q,\gamma_q\gamma_q^\sigma,\gamma_q\gamma_q^\sigma\gamma_q^{\sigma^2},\gamma_q\gamma_q^\sigma\gamma_q^{\sigma^2}\gamma_q^{\sigma^3})\in\GL_5(\mc O_{L_q})
\]
Then by definition
\[
\m U_q^{-1}\eta(l)\m U_q=\eta(l)\ \text{for}\ l\in L_q\quad \text{and}\quad \m U_q^{-1}\eta(u)\m U_q=\begin{pmatrix}
 & \gamma_q & & & \\
& & \gamma_q^{\sigma} & &\\
& & & \gamma_q^{\sigma^2} &\\
& & & & \gamma_q^{\sigma^3}\\
\gamma_q^{\sigma^4} & & & & 
\end{pmatrix}.
\]
Next, we take $\alpha\in\mc O_M$ such that the matrix
\[
\m A = \begin{pmatrix}
\alpha & \alpha^{\sigma} & \alpha^{\sigma^2} & \alpha^{\sigma^3} & \alpha^{\sigma^4} \\
\alpha^{\sigma} & \alpha^{\sigma^2} & \alpha^{\sigma^3} & \alpha^{\sigma^4} & \alpha \\
\alpha^{\sigma^2} & \alpha^{\sigma^3} & \alpha^{\sigma^4} & \alpha & \alpha^{\sigma} \\
\alpha^{\sigma^3} & \alpha^{\sigma^4} & \alpha & \alpha^{\sigma} & \alpha^{\sigma^2} \\
\alpha^{\sigma^4} & \alpha & \alpha^{\sigma} & \alpha^{\sigma^2} & \alpha^{\sigma^3} \\
\end{pmatrix}
\]
can be regarded as an element in $\GL_5(\mc O_{L_q})$. In the case where \[\mc O_M=\Z\langle \alpha,\alpha^{\sigma},\alpha^{\sigma^2},\alpha^{\sigma^3},\alpha^{\sigma^4}\rangle\] for some $\alpha\in\mc O_M$, this is so since $\det(\m A)=\Disc(\mc O_M)=p_0^4$.
Then
\[
\m A\m U_q^{-1}\eta(l)\m U_q\m A^{-1}, \m A\m U_q^{-1}\eta(u)\m U_q\m A^{-1}\in\Mat_{5\times 5}(\mc O_{E_q}),\ \text{where}\ l\in\mc O_{L_q}. 
\]
Combining with Proposition \ref{prop Lambdamax}, we must have
\begin{equation}\label{eq:Dlocaltriv}
\m A\m U_q^{-1}\eta(\Lambda_{D,q}^{\max})\m U_q\m A^{-1}=\m A\m U_q^{-1}\eta(\Lambda_{D,q})\m U_q\m A^{-1}=\Mat_{5\times 5}(\mc O_{E_q}).
\end{equation}

Now we drop the condition $q\neq p_0$. We may still find $\gamma_{p_0}\in L_{p_0}^{\times}$ such that $N_{L_{p_0}/E_{p_0}}(\gamma_{p_0})=a$. We define $\m U_{p_0}$ and $\m A$ as above; then 
\[
\m A\m U_{p_0}^{-1}\eta(\Lambda_{D,p_0}^{\max})\m U_{p_0}\m A^{-1}
\] 
is a maximal $\mc O_{E_{p_0}}$-order of $\Mat_{5\times 5}(E_{p_0})$. Using the explicit description of $\Lambda_D^{\max}$ in \S \ref{subsection despLambdaDmaxiota} and the Chinese remainder theorem, for any fixed $n_0$ relatively prime to $2\cdot 7\cdot p_0$, we may find a matrix $\m V_{p_0}\in \GL_5(L)$ such that \[\m V_{p_0}\in \m I_{5}+\Mat_{5\times 5}(n_0 \mc O_{E,(n_0)})\]
and
\[\m V_{p_0} \m A\m U_{p_0}^{-1}\eta(\Lambda_{D,p_0}^{\max})\m U_{p_0}\m A^{-1} \m V_{p_0}^{-1}=\Mat_{5\times 5}(\mc O_{E_{p_0}}).\]

\subsubsection{Mod $n$ trivializations of $\Lambda_D^{\max}$}

Now, let $n_0$ be relatively prime to $2 \cdot 7 \cdot p_0$ and let $n=n_0\cdot p_0^{n_{p_0}}$ for some $n_{p_0}\geq 0$.
Equation \eqref{eq:Dlocaltriv} can be extended to a trivialization over $\mc O_L/n_0$. 
We apply Hensel's lemma and the Chinese remainder theorem to produce $\gamma_n \in \mc O_{L,(n_0)}\times L_{p_0}$ such that 
\[
N_{L/E}(\gamma_n) \equiv a \pmod n.
\]
Define 
\[
\m U_n := \mathrm{diag}(1,\gamma_n,\gamma_n\gamma_n^\sigma,\gamma_n\gamma_n^\sigma\gamma_n^{\sigma^2},\gamma_n\gamma_n^\sigma\gamma_n^{\sigma^2}\gamma_n^{\sigma^3})\in\GL_5(L)\]
and\[\m B_n=\m U_n\m A^{-1}\m V_{p_0}^{-\min(1,n_{p_0})}
\]
Then we have $\m B_n\in\GL_5(\mc O_{L_q})$ for all $q|n_0$.

We consider the reduction mod $n$ map
\[(\Lambda_D^{\max})^\infty := \prod_{v \text{ finite}} \Lambda_{D_v}^{\max} \to (\Lambda_D^{\max})^\infty/n(\Lambda_D^{\max})^\infty\cong \Lambda_{D,(n)}^{\max}/n\Lambda_{D,(n)}^{\max}.\]

\begin{lem}\label{lem:Dtrivmodn}
We have the isomorphism
\[
\m B_n^{-1}\eta((\Lambda_D^{\max})^\infty)/\eta (n(\Lambda_D^{\max})^\infty)\m B_n \cong \Mat_{5 \times 5}(\mc O_E/n).
\]

\end{lem}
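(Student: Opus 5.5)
The plan is to reduce to a statement prime by prime using the Chinese remainder theorem. First, $(\Lambda_D^{\max})^\infty/n(\Lambda_D^{\max})^\infty \cong \prod_{q \mid n} \Lambda_{D,q}^{\max}/n\Lambda_{D,q}^{\max}$, and likewise $\Mat_{5\times 5}(\mc O_E/n) \cong \prod_{q\mid n}\Mat_{5\times5}(\mc O_{E_q}/n)$. So it suffices to show that for each prime $q \mid n$, conjugation by $\m B_n$ identifies $\eta(\Lambda_{D,q}^{\max})$ with $\Mat_{5\times5}(\mc O_{E_q})$ modulo $n$. This is enough because reduction mod $n$ of an exact identity of $\mc O_{E_q}$-orders gives the claimed isomorphism of quotient rings. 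Since $\m B_n$ has entries in $L$, conjugation by it preserves $\eta(D)\otimes E_q$ only after we know it agrees with a genuine local trivialization up to an error congruent to the identity mod $n$; establishing this agreement is the main task.

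\emph{Primes $q \mid n_0$.} Here $\m V_{p_0}\in \m I_5 + \Mat_{5\times5}(n_0\mc O_{E,(n_0)})$, so modulo $n_0$ it is the identity and can be ignored. Next, $\gamma_n$ and $\gamma_q$ both have norm $\equiv a$ mod $n$. I would therefore like to choose $\gamma_n \equiv \gamma_q \pmod{n\mc O_{L_q}}$, which the Hensel/CRT construction of $\gamma_n$ permits. If instead we only know $N(\gamma_n)\equiv N(\gamma_q)$, then $\gamma_n\gamma_q^{-1}$ has norm $\equiv1$ mod $n$. By Hilbert 90 for the (split or unramified) extension of finite étale algebras $\mc O_{L_q}/n$ over $\mc O_{E_q}/n$, we can write $\gamma_n\gamma_q^{-1}\equiv c^{\sigma}/c$ with $c$ a unit. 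Then $\m U_n \equiv \m U_q\cdot c^{-1}\eta(c)$ up to scalars, which is an extra conjugation by an element of $\eta(\mc O_{L_q}^\times)\subset \eta(\Lambda_{D,q}^{\max})^\times$ and does not change the image order. In either case $\m B_n \equiv \m U_q\m A^{-1}$ times an element normalizing $\eta(\Lambda_{D,q}^{\max})$, modulo $n$. Equation \eqref{eq:Dlocaltriv} then gives the desired identification mod $n$. One also needs $\m A\in\GL_5(\mc O_{L_q})$, i.e. $\det\m A = p_0^{2}$-type discriminant factor prime to $q$, which holds since $q\nmid p_0$.

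\emph{The prime $p_0$ (when $n_{p_0}>0$).} Here $\m V_{p_0}$ is included in $\m B_n$, and by its defining property $\m V_{p_0}\m A\m U_{p_0}^{-1}\eta(\Lambda_{D,p_0}^{\max})\m U_{p_0}\m A^{-1}\m V_{p_0}^{-1}=\Mat_{5\times5}(\mc O_{E_{p_0}})$ holds on the nose. Since $\gamma_n$ has $p_0$-component chosen as $\gamma_{p_0}$ (this is how I would set up the CRT choice, with $L_{p_0}$-component exactly $\gamma_{p_0}$), we get $\m U_n = \m U_{p_0}$ at $p_0$, and the identity is exact, hence holds mod $p_0^{n_{p_0}}$. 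I expect the main obstacle to be bookkeeping the direction of conjugation ($\m B_n^{-1}(\cdot)\m B_n$ versus $\m A\m U^{-1}(\cdot)\m U\m A^{-1}$), since $\m B_n=\m U_n\m A^{-1}\m V_{p_0}^{-1}$ gives $\m B_n^{-1}X\m B_n=\m V_{p_0}\m A\m U_n^{-1}X\m U_n\m A^{-1}\m V_{p_0}^{-1}$, matching \eqref{eq:Dlocaltriv}. The second delicate point is making sure the non-exact congruence $N(\gamma_n)\equiv a$ at $q\mid n_0$ really only perturbs the order by an inner automorphism mod $n$, which is why I would insist on choosing $\gamma_n\equiv\gamma_q$ directly.
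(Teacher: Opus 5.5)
Your proposal is correct and takes the same approach as the paper. You reduce to the primes $q\mid n$ by the Chinese remainder theorem, use the local trivialization \eqref{eq:Dlocaltriv} at $q\mid n_0$ (where $\m V_{p_0}\equiv \m I_5$), use the defining property of $\m V_{p_0}$ at $p_0$ (taking the $p_0$-component of $\gamma_n$ to be exactly $\gamma_{p_0}$), and then reduce modulo $n$. Your extra step handles the fact that $N_{L/E}(\gamma_n)\equiv a$ holds only modulo $n$, either by choosing $\gamma_n\equiv\gamma_q$ or via Hilbert 90 over $\mc O_{E_q}/n$, using that the diagonal matrix $\eta(c)$ commutes with $\m U_q$. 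This fills in a point the paper's two-line proof passes over when it invokes an exact conjugation identity for $\m B_n$.
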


\begin{proof}

The definition of $\m B_n$ guarantees that
\[\m B_n^{-1}\eta(\Lambda_{D,(n)}^{\max})\m B_n\cong \prod_{q\mid n}\Mat_{5\times 5}(\mc O_{E_q}).\]
Modding out $n$ we get the result.

\end{proof}

\subsubsection{Trivializations in $G$}

If $\m H$ is a $5 \times 5$ invertible Hermitian matrix over $E$, we define an algebraic group over $\mc \Z$ by
\begin{equation}\label{eq:Umodndef}
U_5^\m H(R) := U_5^{\mc O_E/\Z, \m H} := \{\m X \in \Mat_{5 \times 5}(\mc O_E \otimes_\Z R) \, :\, \bar{\m X}^T \m H \m X = \m H\}
\end{equation} 
for any $\Z$-algebra $R$ and shorthand $U_5(R) := U_5^{\mc O_E/\Z, \m J}$ where $\m J$ is the antidiagonal Hermitian form.

\begin{lem}\label{lem:Kmodn}
Let $n$ be relatively prime to $2 \cdot 7$. Then reduction mod $n$ induces a surjection
\[
\m B_n^{-1} \eta(K_0^\infty) \m B_n \onto U^{\m B_n^\dagger \m B_n}_5(\Z/n) \subseteq \Mat_{5 \times 5}(\mc O_E/n). 
\]
\end{lem}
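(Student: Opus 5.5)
The plan is to reduce the statement to a local claim at each prime dividing $n$ and then apply smoothness and Hensel's lemma.

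\emph{Step 1: well-definedness and image.} By Lemma \ref{lem:Dtrivmodn}, conjugation by $\m B_n$ followed by reduction mod $n$ identifies $(\Lambda_D^{\max})^\infty/n(\Lambda_D^{\max})^\infty$ with $\Mat_{5\times5}(\mc O_E/n)$. This is a ring isomorphism, so it restricts to $K_0^\infty\subseteq(\Lambda_D^{\max})^\infty$. Under $\eta$ the involution $\iota$ becomes $\m X\mapsto\m X^\dagger$, so conjugating by $\m B_n$ turns it into $\m Y\mapsto(\m B_n^\dagger\m B_n)^{-1}\m Y^\dagger(\m B_n^\dagger\m B_n)$. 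Hence $\iota(g)g=1$ becomes $\m Y^\dagger\m H\m Y=\m H$ with $\m H=\m B_n^\dagger\m B_n$, and the image lies in $U_5^{\m H}(\Z/n)$. One point needs checking: $\m H$ should actually have entries in $\mc O_{E,(n)}$ and be invertible there, i.e. $\m B_n^{\dagger}\m B_n\in\Mat_{5\times5}(E)$ up to the relevant local units. Since $\m B_n\in\GL_5(\mc O_{L_q})$ and the conjugated $\iota$ preserves $\Mat_5(\mc O_{E_q})$, $\m H$ is determined up to a scalar in $\mc O_{E_q}^\times$ and is $\mc O_{E_q}$-integral and unimodular. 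I would verify this directly from the formula for $\m U_n$ and $\m A$, using $\bar a=a^{-1}$ and the fact that $\m A$ has entries in $M$ with $\m A^T$ related to $\m A$ by a permutation.

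\emph{Step 2: reduction to local surjectivity.} By the Chinese remainder theorem, $U_5^{\m H}(\Z/n)=\prod_{q^k\| n}U_5^{\m H}(\Z/q^k)$. Since $K_0^\infty=\prod_q K_{0,q}$ and only the factors at $q\mid n$ matter, it suffices to show that for each $q\mid n$ the map $\m B_n^{-1}\eta(\ms G_0(\Z_q))\m B_n\to U_5^{\m H}(\Z/q^k)$ is surjective. Here $q\neq2,7$, and $\ms G_0(\Z_q)=K_{0,q}$ by Proposition \ref{prop model}. After conjugation, $\ms G_0\otimes\Z_q$ is identified with the group scheme $U_5^{\m H}$ over $\Z_q$.

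\emph{Step 3: smoothness and Hensel.} For $q$ split, $U_5^{\m H}\otimes\Z_q\cong\GL_{5,\Z_q}$, which is smooth, and $\GL_5(\Z_q)\to\GL_5(\Z/q^k)$ is surjective. For $q$ inert, $\m H$ is unimodular Hermitian over the unramified $\mc O_{E_q}$, so $U_5^{\m H}$ is a smooth reductive group scheme over $\Z_q$; this is the hyperspecial model of Proposition \ref{prop model}(1). Smoothness plus Hensel's lemma (formal smoothness over the complete ring $\Z_q$) then gives surjectivity of $U_5^{\m H}(\Z_q)\to U_5^{\m H}(\Z/q^k)$. For $q=p_0$, Lemma \ref{lem:Dtrivmodn} already incorporates $\m V_{p_0}$, and $p_0$ is split, so the split argument applies.

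\emph{Main obstacle.} I expect the main obstacle to be the bookkeeping in Step 1: checking that the conjugated Hermitian form $\m B_n^\dagger\m B_n$ is genuinely $\mc O_E$-integral and unimodular mod $n$, especially at $p_0$, where $\m V_{p_0}$ and the approximation $\gamma_n$ enter. The fallback is to argue purely locally: $\iota$ stabilizes $\Lambda_{D,q}^{\max}$ (Proposition \ref{prop iotastable}), so the transported involution on $\Mat_5(\mc O_{E_q})$ is of the form $\m Y\mapsto\m H_q^{-1}\m Y^\dagger\m H_q$ with $\m H_q\in\GL_5(\mc O_{E_q})$ Hermitian (Skolem–Noether with an integral conjugator), and $\m H_q$ agrees with $\m B_n^\dagger\m B_n$ up to a unit scalar mod $q^k$.
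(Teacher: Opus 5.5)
Your proposal is correct and follows the same route as the paper. Both reduce to the factors $K_{0,q}=G_q\cap\Lambda_{D,q}^{\max}$ for $q\mid n$, rewrite $\iota(g)g=1$ via $\eta(G_q)=\{\m X:\m X^\dagger\m X=1\}$ as $\m Y^\dagger\m H\m Y=\m H$ with $\m H=\m B_n^\dagger\m B_n$, and apply Lemma~\ref{lem:Dtrivmodn}. Your Step~3 (smoothness of the $\GL_5$/hyperspecial model plus Hensel) supplies the lifting argument for surjectivity that the paper's one-line proof leaves implicit.

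One small correction to Step~1: $\m B_n^\dagger\m B_n$ need not be $E$-rational. For example, when $p_0\nmid n$ it equals $\m A^{-1}\m U_n^\dagger\m U_n\m A^{-1}\in\Mat_{5\times5}(M)$. In general it is only an $M_q$-scalar multiple, rather than an $E_q$-scalar multiple, of a unimodular Hermitian matrix over $\mc O_{E_q}$. This is harmless, since rescaling $\m H$ by an invertible conjugation-fixed scalar does not change $U_5^{\m H}$, and your Skolem--Noether fallback already accounts for it.
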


\begin{proof}

Let $S$ be the set of primes dividing $n$. 
It suffices to consider the reduction mod $n$ of $\m B_n^{-1} \eta(K_{0,S}) \m B_n$. By definition, we have $K_{0,q} = G_q \cap \Lambda_{D,q}^{\max}$ for $q|n$. Since $\eta(G_q) = \{\m X \in \eta(D_q) \,:\, \m X^\dagger \m X = 1\}$, the result follows from Lemma \ref{lem:Dtrivmodn}. 

\end{proof}

We remark that for $q\mid n_0$, the matrix $\m B_n^{\dagger}\m B_n$ already lies in $\GL_5(\mc O_q)$. When $q=p_0$, since the maximal compact subgroup $\eta(\Lambda_{D_q}^{\max})$ is $\dagger$-stable, the group \[\m B_n^{-1} \eta(\Lambda_{D_q}^{\max}) \m B_n\cong \GL_5(\mc O_{E_{q}}) \cong \GL_5(\mc O_{E_{\rho_q}})\times\GL_5(\mc O_{E_{\bar \rho_q}})\] is $\Ad(\m B_n^{\dagger}\m B_n)\circ (\cdot)^{\dagger}$-stable, thus $\m B_n^{\dagger}\m B_n\in \Q_q^{\times}\GL_5(\mc O_{E_{q}})$.

We can conjugate further to get $U_5(\Z/n)$:

\begin{lem}\label{lem:unitarymodn}
Let $n$ be relatively prime to $2\cdot7$. Assume that Hermitian $\m  H \in \Q_q^{\times}\GL_5(\mc O_{E_q})$ for all  $q|n$. Then there is $\m C_n \in \GL_5(\mc O_E/n)$ such that 
\[
\m C_n^{-1} U_5^{\m H}(\Z/n)\m C_n = U_5(\Z/n).
\]
\end{lem}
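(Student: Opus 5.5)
We plan to reduce the statement, via the Chinese remainder theorem, to a statement over each $\Z/q^k$ with $q^k \,\|\, n$. Since $\mc O_E/n \cong \prod_{q\mid n}\mc O_E\otimes \Z/q^{k_q}$ and $U_5^{\m H}(\Z/n)\cong\prod_q U_5^{\m H}(\Z/q^{k_q})$, it suffices to produce, for each $q\mid n$, a matrix $\m C_q\in\GL_5(\mc O_{E_q})$ with $\m C_q^\dagger \m H \m C_q = c_q \m J$ for some $c_q\in\Q_q^\times$. Conjugating by $\m C_q$ then carries the unitary group of $\m H$ to that of $c_q\m J$, which equals that of $\m J$. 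We then reduce mod $q^{k_q}$ and glue the local $\m C_q$ by CRT to get $\m C_n$.

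For the local statement, first scale $\m H$ by an element of $\Q_q^\times$ so that $\m H\in\GL_5(\mc O_{E_q})$. This uses the hypothesis $\m H\in\Q_q^\times\GL_5(\mc O_{E_q})$. It is harmless since scalar multiples define the same unitary group, and a scalar of $\Q_q$ keeps $\m H$ Hermitian. We then have a unimodular Hermitian lattice $\mc O_{E_q}^5$ with $q$ odd, so we can split into cases. If $q$ is split in $E$, then $\mc O_{E_q}=\Z_q\times\Z_q$, and a Hermitian form is $(h,h^T)$ with $h\in\GL_5(\Z_q)$. Taking $\m C_q=(h^{-1}J,I)$, or any analogous explicit choice, gives $\m C_q^\dagger\m H\m C_q=\m J$ directly.

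If $q$ is inert (unramified, since $q\neq 7$), we will use the classification of unimodular Hermitian lattices over unramified extensions of $\Z_q$, $q$ odd. Such a lattice is determined by its rank: reduce mod $q$, where nondegenerate Hermitian forms over $\F_{q^2}/\F_q$ are all equivalent. Then lift an isometry via Hensel's lemma, using the smoothness of the scheme of isometries, or equivalently successive Gram--Schmidt-type corrections since $2$ is a unit. Concretely, we diagonalize $\m H$ over $\mc O_{E_q}$ to $\diag(u_1,\dots,u_5)$ with $u_i\in\Z_q^\times$. Using surjectivity of the norm $\mc O_{E_q}^\times\to\Z_q^\times$ (unramified), we rescale to the identity. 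We then note that $\m J$ is likewise unimodular, so it is also equivalent to $I$. Composing gives $\m C_q$ with $\m C_q^\dagger\m H\m C_q=\m J$.

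The main obstacle is the inert-case equivalence: checking carefully that the diagonalization works integrally, i.e.\ that an anisotropic-mod-$q$ vector exists with unit norm so one can split off a rank-one summand at each step. We would handle this by noting that a nonzero Hermitian form mod $q$ with $q$ odd always has a vector of nonzero value; this is the usual polarization trick, requiring $2$ invertible. We would lift that vector and split orthogonally over $\mc O_{E_q}$. For $q\neq p_0$, the remark preceding the statement already gives $\m H=\m B_n^\dagger\m B_n\in\GL_5(\mc O_{E_q})$, so only the $p_0$ factor needs the scaling step. Finally, reducing $\m C_q$ mod $q^{k_q}$ gives $\m C_q\in\GL_5(\mc O_E/q^{k_q})$. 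The identity $\m C^{-1}U_5^{\m H}\m C=U_5$ holds because $\m X^\dagger\m H\m X=\m H$ iff $(\m C^{-1}\m X\m C)^\dagger(c\m J)(\m C^{-1}\m X\m C)=c\m J$.
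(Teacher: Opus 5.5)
Your proposal is correct and follows essentially the paper's route. You reduce via the Chinese remainder theorem to each $q \mid n$, use the same explicit matrix $(h^{-1}J, I)$ in the split case, and in the inert case use that unimodular Hermitian lattices over the unramified extension are determined by rank; the paper cites Jacobowitz's Theorem~7.1 for this last step where you sketch a self-contained proof (splitting off unit-norm vectors, then using surjectivity of the norm on units), and your gluing at the level of groups rather than forms avoids the paper's slightly loose claim of a single global scalar $\lambda$.
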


\begin{proof}

For $q|n$, we first claim that there exists $\m C_q\in\GL_5(\mc O_{E_q})$ such that
\[
\m C_q^{-1} U_5^{\m H}(\mc O_{E_q}/q^{v_q(n)})\m C_q = U_5(\mc O_{E_q}/q^{v_q(n)}).
\]
If $q$ is inert in $E/\Q$, using \cite[Theorem 7.1]{Jac62}, there exists $\m C_q\in\GL_5(\mc O_{E_q})$ and a scalar $\lambda\in \Q_q^{\times}$ such that $\m C_q^{\dagger}\m H\m C_q=\lambda  \m J$. 
If $q=\rho_q\bar \rho_q$ is split over $E/\Q$, then both unitary groups become split and isomorphic to $\GL_5(\Z/q^{v_q(n)})$. Identify  $\GL_5(\mc O_{E_q})$ with $\GL_5(\mc O_{E_{\rho_q}})\times\GL_5(\mc O_{E_{\bar \rho_q}})$ and write $\m H=(\m X,\m X^{\dagger})$ and $\m J=(\m J,\m J)$.  Then we may construct $\m C_q=(\m X^{-1}\m J,\m I_5)$ such that $\m C_q\in \GL_5(\mc O_{E_q})$ and $\m C_q^{\dagger}\m H\m C_q= \lambda\m J$ for some $\lambda\in\ \Q_q^{\times}$. 

In general, let $\m C_n$ be a matrix in $\GL_5(\mc O_E/n)$ such that $\m C_n \equiv \m C_q \pmod{q^{v_q(n)}}$ for all $q|n$, which exists by the Chinese remainder theorem. Then $\m C_n^\dagger \m H \m C_n \equiv \lambda \m J \pmod n$ for some $\lb \in \Q^\times$ which 
implies that $\m C_n^{-1} U_5^{\m H}(\Z/n)\m C_n = U_5(\Z/n)$. 

\end{proof}

\subsection{Explicit Gates}\label{ssec explicitgates}

By Proposition \ref{prop model}, the order $\Lambda_D^{\max}$ satisfies the conditions of Assumption \ref{a:orderforgates}. The trivialization \eqref{eq:Dlocaltriv} also identifies
\[
\Lambda_{D_{p}}^{\max} = \Mat_{5 \times 5}(\mc O_{E_p}). 
\]
which we can, without loss of generality, conjugate to satisfy the other conditions in \S\ref{ssec:findinggates}. 

Therefore we get from \S\ref{ssec:findinggates} that
\[
S_{\bar \Lambda_{0,p}} = \{p^{-1} g \, : \,  \iota(g)g= p^2,\, g \in \Lambda_D^{\max},\, g \notin p \Lambda_D^{\max} \}
\]
at inert $p$ and 
\[
S_{\bar \Lambda_{0,p}} = \{\bar \rho_p^{-1} g \, : \,  \iota(g)g = p,\, g \in \Lambda_D^{\max},\, g \notin \bar \rho_p \Lambda_D^{\max} \cup \rho_p \Lambda_D^{\max}\}
\]
at split $p=\rho_p\bar \rho_p$. 

\subsubsection{Finding gates}

Finding explicit gate elements is the hardest part of our algorithm, which we leave to Section \ref{sec:findinggates} for a more detailed discussion.

\subsubsection{Counts}\label{sssec: Countgates}
When $p$ is split, gates are in bijection with vertices at distance $1$ from $x_0$. By a standard formula in terms of $p$-binomial coefficients, this is
\[
|S_{\bar \Lambda_{0,p}}| = \sum_{i=1}^4 {\binom 5i}_p \qquad p \text{ split}. 
\]
For example, $|S_{\bar \Lambda_{0,p}}| =  3961830$ when $p=11$. 

When $p$ is inert, gates are in bijection with distance-$2$ hyperspecial vertices. Each of these corresponds to a unique non-special vertex at distance $1$. 
As we will see in \S\ref{sec:complextype}, there are $(p^5 + 1)(p^2 + 1)$ such non-special vertices and $(p^3+1)(p^5+1)$ such special but non-hyperspecial vertices. Each of these non-special (resp. special but non-hyperspecial) $x_1$ corresponds to $p$ (resp. $(p+1)(p^3+1)-1$) distance-$2$ vertices $x_2$ (in the notation of \S\ref{sec:complextype}, if we fix $x_1$, then $x_0$ and all possible $x_2$ form the set of vertices of type 0 adjacent to $x_1$, which is of type $1$ or $2$). Therefore
\[
|S_{\bar \Lambda_{0,p}}| = p(p^2 + 1)(p^5 + 1)+[(p+1)(p^3+1)-1](p^3+1)(p^5+1) \qquad p \text{ inert}. 
\]
For example, $|S_{\bar \Lambda_{0,p}}| =  765672$ when $p=3$ and $297782760$ when $p=5$.

\subsection{Reductions of Gates}

We finally discuss our choices of $K^{\infty, p}$ and the reductions of $S_{\bar \Lambda_{0,p}}$ in $\bar \Lambda_{0,p}/\bar \Lambda_p$.

\subsubsection{Choice of $K^{\infty, p}$}

We now consider $n$ relatively prime to $2 \cdot 7 \cdot p$. Let 
\[
n = \prod_{q_S \text{ split}} q_S^{n_{q_S}} \prod_{q_I \text{ inert}} q_I^{n_{q_I}} 
\]
and define
\[
K^{\infty,p} := \prod_{q \neq p} K_{0,q}(q^{n_q})
\]
where $K_{0,q}(q^{n_q})$ denotes the principal congruence subgroup of level $q^{n_q}$ in $K_{0,q} \subseteq G_q$. Then
\[
K_{0,q}/K_{0,q}(q^{n_q}) \cong 
\begin{cases}
\GL_5(\Z/q^{n_q}) & q \text{ split,} \\
U_5(\Z/q^{n_q}) & q \text{ non-split,}
\end{cases}
\]
where $U_5$ is defined as in equation \eqref{eq:Umodndef}. 

Then, since $G(\Q) \subseteq G^{p, \infty}$ is dense:
\begin{equation}\label{eq:Lambdaquotient}
\begin{aligned}
\Lambda_{0,p}/\Lambda_p &= (G(\Q) \cap K_0^{\infty, p})/(G(\Q) \cap K^{\infty, p}) = K_0^{\infty, p}/K^{\infty, p} \\ &
= \prod_{q_S \text{ split}} \GL_5(\Z/q^{n_{q_S}}) \prod_{q_I \text{ inert}} U_5(\Z/q^{n_{q_I}}) =: U_5(\Z/n). 
\end{aligned}
\end{equation}
Since $\Lambda_{0,p} \cap Z_{G_p}^\spl = \Lambda_{0,p} \cap Z_G^{\spl_p}$ where $Z_G^{\spl_p}$ denotes the part of the center split over $\Q_p$, we also get
\begin{equation}\label{eq:Lambdabarquotient}
\bar \Lambda_{0,p}/\bar \Lambda_p = U^\sharp_5(\Z/n) := 
\begin{cases}
    U_5(\Z/n)/U_1(\Z/n) & p \text{ split}, \\
    U_5(\Z/n)  & p \text{ inert}.
\end{cases}
\end{equation}

We can also compute the sizes of these groups as products over primes dividing $n$. For prime $n$, the sizes are standard and can be found in \cite[\S2]{ATLAS}. For prime powers $n$, we additionally use that at unramified places $[K_{0,q}(q^{i+1}) : K_{0,q}(q^i)] = q^{\dim G}$ for $i \geq 1$ (see, e.g., \cite[Theorem 13.5.1(1)]{KP23}). The formulas for general $n$ then follow from the Chinese remainder theorem.
\begin{align*}
|U_5(\Z/n)| &=  n^{25}  \prod_{\substack{q_S | n \\ q_S \text{ split}}}  \lf( \f1{q_S^{25}} \prod_{i=0}^4 (q_S^5 - q_S^i) \ri) \times \prod_{\substack{q_I | n \\ q_I \text{ inert}}} \lf( \f1{q_I^{25}} \prod_{i=0}^4 (q_I^5 + (-1)^i q_I^i) \ri), \\
|U_1(\Z/n)| &= n\prod_{\substack{q_S | n \\ q_S \text{ split}}} \f{q_S - 1}{q_I} \times \prod_{\substack{q_I | n \\ q_I \text{ inert}}} \f{q_I + 1}{q_I}.
\end{align*}

\subsubsection{Explicit Reductions}
We next need to compute the images of the elements of $S_{\bar \Lambda_p}$ in $U^\sharp_5(\Z/n)$. As in Lemmas \ref{lem:Dtrivmodn} and \ref{lem:unitarymodn}, choose $\m B_n \in \GL_5(\mc O_{L,(n)})$ and $\m C_n \in \GL_5(\mc O_E/n)$ so that the reduction modulo $n$ defines a surjection
\[
\m B_n^{-1} \eta((\Lambda_D^{\max})^\infty) \m B_n \onto \Mat_{5 \times 5}(\mc O_E),
\]
and 
\[
(\m B_n \m C_n )^\dagger (\m B_n \m C_n ) = 1.
\]
Then:
\begin{lem}\label{lem:gatereduction}
Reduction mod $n$ induces an isomorphism
\[
\m C_n^{-1} \lf( \m B_n^{-1} \eta(\Lambda_{0,p}) \m B_n/ \m B_n^{-1} \eta(\Lambda_p) \m B_n \ri) \m C_n \iso U_5(\Z/n).
\]

\end{lem}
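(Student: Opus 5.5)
The plan is to split the isomorphism into three pieces that are already available. The first is the abstract identification \eqref{eq:Lambdaquotient}, $\Lambda_{0,p}/\Lambda_p \cong K_0^{\infty,p}/K^{\infty,p}$. The second is the mod $n$ description of $K_0^{\infty}$ in Lemma \ref{lem:Kmodn}. The third is the conjugation of Lemma \ref{lem:unitarymodn}, which turns the unitary group of $\m B_n^\dagger\m B_n$ into the standard $U_5$.

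First, since $n$ is prime to $2\cdot 7\cdot p$, only the places $q\mid n$ matter. The group $K^{\infty,p}$ differs from $K_0^{\infty,p}$ only at these places, where it is the principal congruence subgroup $K_{0,q}(q^{n_q})$. By Lemma \ref{lem:Dtrivmodn}, the map $g\mapsto \m B_n^{-1}\eta(g)\m B_n \bmod n$ is a ring homomorphism $(\Lambda_D^{\max})^\infty\to\Mat_{5\times5}(\mc O_E/n)$. Its kernel is $n(\Lambda_D^{\max})^\infty$, so its restriction to $K_0^{\infty}$ has kernel exactly $\prod_q K_{0,q}(q^{n_q})$ times the factors at primes not dividing $n$.

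Lemma \ref{lem:Kmodn} shows this restriction surjects onto $U_5^{\m B_n^\dagger\m B_n}(\Z/n)$. Composing with the isomorphism $\Lambda_{0,p}/\Lambda_p\cong K_0^{\infty,p}/K^{\infty,p}$ from \eqref{eq:Lambdaquotient}, which uses density of $G(\Q)$ in $G^{p,\infty}$ (strong approximation style), we get that reduction mod $n$ gives an isomorphism
\[
\m B_n^{-1}\eta(\Lambda_{0,p})\m B_n/\m B_n^{-1}\eta(\Lambda_p)\m B_n \iso U_5^{\m B_n^\dagger\m B_n}(\Z/n).
\]
Concretely, injectivity holds because $\Lambda_p=\Lambda_{0,p}\cap K^{\infty,p}$ is precisely the kernel. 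Surjectivity holds because $\Lambda_{0,p}$ is dense in $K_0^{\infty,p}$ and $K^{\infty,p}$ is open.

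Finally, by the choice of $\m C_n$ we have $(\m B_n\m C_n)^\dagger(\m B_n\m C_n)=1$, which gives $\m C_n^\dagger(\m B_n^\dagger\m B_n)\m C_n\equiv \m I_5$. This is exactly the situation of Lemma \ref{lem:unitarymodn}; the remark after Lemma \ref{lem:Kmodn} supplies its hypothesis $\m B_n^\dagger\m B_n\in\Q_q^\times\GL_5(\mc O_{E_q})$. So conjugating by $\m C_n$ carries $U_5^{\m B_n^\dagger\m B_n}(\Z/n)$ isomorphically onto the unitary group of the identity form mod $n$. Conjugating once more by a fixed matrix relating $\m I_5$ and $\m J$ over each $\mc O_{E_q}$, which exists by the same local argument (Jacobowitz at inert $q$, direct at split $q$), identifies this with $U_5(\Z/n)$.

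The main obstacle is the bookkeeping of normalizations. The statement uses $\m C_n$ with $(\m B_n\m C_n)^\dagger(\m B_n\m C_n)=1$, but $U_5$ is defined via $\m J$, so the scalar $\lambda$ and the identity-versus-$\m J$ discrepancy must be absorbed. I would handle this by replacing $\m C_n$ with $\m C_n\m W$ for a fixed $\m W$ satisfying $\m W^\dagger\m W\equiv\lambda \m J$, and noting that scalars do not change the unitary group.
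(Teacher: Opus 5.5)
Your decomposition is the paper's own. It runs through \eqref{eq:Lambdaquotient}, identifies $K^{\infty,p}$ with the kernel of reduction mod $n$, applies Lemma~\ref{lem:Kmodn}, and ends with conjugation by $\m C_n$. You are right that $(\m B_n\m C_n)^\dagger(\m B_n\m C_n)=1$ lands in the unitary group of $\m I_5$ rather than of $\m J$. Your repair by an extra $\m W$ with $\m W^\dagger\m W\equiv\lambda\m J$ is also correct, and it makes explicit what the paper's ``follows from the definition of $\m C_n$'' skips. The genuine gap is surjectivity. You, like the paper, justify it by density of $\Lambda_{0,p}$ in $K_0^{\infty,p}$, i.e.\ of $G(\Q)$ in $G^{p,\infty}$. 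But strong approximation needs a simply connected group. Here $G$ has the torus quotient $\mathrm{Nrd}\colon G\to T:=U_1^{E/\Q}$, and since $T(\R)$ is compact, $T(\Q)$ is discrete in $T(\A^\infty)$. So $\mathrm{Nrd}(G(\Q))$ is far from dense, and the density claim is false.

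This is not a technicality: the image really is smaller. Take $g\in\Lambda_{0,p}$. Then $g$ and $g^{-1}=\iota(g)$ lie in $\Lambda^{\max}_{D_q}$ for every $q\neq p$, so $t:=\mathrm{Nrd}(g)\in E^\times$ satisfies $t\bar t=1$ and is a unit away from $p$. If $p$ is inert, $t\bar t=1$ also forces $v_p(t)=0$, so $t\in\mc O_E^\times=\{\pm1\}$; the condition at $7$ (connected parahoric $K_{0,7}$) further rules out $t=-1$. Hence every element of the image of $\Lambda_{0,p}$ in $U_5(\Z/n)$ has determinant $1$. On the other hand, $\det\colon U_5(\Z/n)\to U_1(\Z/n)$ is onto (use $\mathrm{diag}(1,1,\lambda,1,1)$), and its target has order $>2$ for every $n>1$ prime to $14p$. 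So the map you construct is injective but not surjective, and the lemma as stated fails for inert $p$.

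For split $p$, the same argument together with the condition at $7$ confines the determinants to the cyclic group generated by $\rho_p/\bar\rho_p$. That subgroup is proper whenever $U_1(\Z/n)$ is not cyclic, e.g.\ whenever $n$ has two distinct prime factors. The paper's proof rests on the same equation \eqref{eq:Lambdaquotient}, so the defect lies in the statement itself, not in how you followed it.

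What your argument can be corrected to prove is the following. Injectivity stands as you wrote it. For the image, apply strong approximation to the simply connected group $G^{\der}=\SU_1(D)$, which is isotropic at $p\neq 2$. Combined with smoothness of $\ms G_0^{\der}$ at the primes $q\mid n$, this shows the image contains $\SU_5(\Z/n)$. Therefore the image is exactly $\{g\in U_5(\Z/n):\det g\in\mathrm{Nrd}(\Lambda_{0,p})\bmod n\}$, which is $\SU_5(\Z/n)$ when $p$ is inert.
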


\begin{proof}
By equation \eqref{eq:Lambdaquotient}, we have
\[
\m B_n^{-1} \eta(\Lambda_{0,p}) \m B_n/ \m B_n^{-1} \eta(\Lambda_p) \m B_n = \m B_n^{-1} \eta(K_0^{\infty, p}) \m B_n/ \m B_n^{-1} \eta(K^{\infty,p}) \m B_n,
\]
where $S$ is the set of primes dividing $n$. By the definition of principal congruence subgroups, $\m B_n^{-1} \eta(K^{\infty,p}) \m B_n$ is exactly the set of elements of $\m B_n^{-1} \eta(K_0^{\infty,p}) \m B_n$ that are $1$ mod $n$. Therefore, by Lemma \ref{lem:Kmodn}, this quotient is exactly $U_5^{\m B_n^\dagger \m B_n}(\Z/n)$. 

The result then follows from the definition of $\m C_n$. 
\end{proof}

\subsection{Gates to Vertices in the Building}\label{sec: localstru}

The last piece of data we need is an explicit assignment of $x \in \mc X$  to gates $s_x \in S_{\bar \Lambda_{0,p}}$ as in Construction \ref{cons:split} or \ref{cons:nonsplit}.

\subsubsection{Labeling $\mc X$}
We first need to label the vertices of $\mc X$ and describe its simplices. 

In the split case, let $p = \rho_p \bar \rho_p$, and fix an isomorphism $G_p \cong \GL_5( E_{\rho_p})$. We assume that $x_0$ is stabilized by $\GL_5(\mc O_{E_{\rho_p}})$ under this isomorphism. Then, by a standard construction (e.g., \cite[\S15.1]{KP23}), the vertices of $\mc X$ correspond to lattices $\mc M \subset E_{\rho_p}^5$ such that
\[
\rho_p \mc O_{E_{\rho_p}}^5 \subsetneq \mc M \subsetneq \mc O_{E_{\rho_p}}^5.
\]
For $1 \leq k \leq 5$, the $k$-simplices containing $x_0$ correspond to sequences of lattices $\mc M_i$ with
\[
\rho_p\mc O_{E_{\rho_p}}^5 \subsetneq \mc M_1 \subsetneq \cdots \subsetneq \mc M_k \subsetneq \mc O_{E_{\rho_p}}^5
\]
Finally, the $G_p$-action is through the isomorphism $G_p \cong \GL_5( E_{\rho_p})$. 

Reducing mod $\rho_p$, we get the following construction:
\begin{construction}\label{cons:Qsplit}
Let $p = \rho_p \bar \rho_p$ be split in $E$. Then the vertices of $Q$ in Construction \ref{cons:split} can be identified with the subspaces 
\[
0 \neq \bar{\mc M} \subsetneq (\mc O_{E_{\rho_p}}/\rho_p)^5.
\]
with the $k$-simplices containing $x_0$ corresponding to sequences of such $\bar{\mc M_i}$ for $1 \leq i \leq k$ satisfying that
\[
0 \neq \bar{\mc M}_1 \subsetneq \cdots \subsetneq \bar{\mc M}_k \subsetneq (\mc O_{E_{\rho_p}}/\rho_p)^5.
\]
Furthermore, given $\bar{\mc M}$ as above, then for any $g \in \GL_5(E_{\rho_p})$ such that $gx_0\in Q$, we have $gx_0 = \bar{\mc M}$ 
if and only if there is $\lb \in E_{\rho_p}$ such that $\lb g \in \GL_5(E_{\rho_p}) \cap \Mat_{5 \times 5}(\mc O_{E_{\rho_p}})$ and $\lb g (\mc O_{E_{\rho_p}}/\rho_p)^5 = \bar{\mc M}$.

\end{construction}

In the inert case, we have explicitly constructed a Hermitian form $\m 
H_p:=\m B_{p^2}^{\dagger}\m B_{p^2}\in\GL_5(\mc O_{E_p})$ on $E_{p}^5$ and the related identification $G_p\cong U_{5}^{\m  H_p}(\Q_p)$. Assume that $x_0$ is stabilized by $\GL_5(\mc O_{E_p})$. Again, by a standard construction (e.g., \cite[\S15.2]{KP23}),
 
 \begin{itemize}

 \item the vertex $x_0$ corresponds to the $\m H_p$-selfdual\footnote{Here we say that an $\mc O_{E_p}$-module $\mc M_2$ is the dual of $\mc M_1$ if $\mc M_2=\{x\in E_p^5 :  \m H_p(\mc M_1,x)\in \mc O_{E_p}\}$.} lattice $\mc O_{E_p}^5$;

 \item the vertices $x_1$ of $\mc X$ at distance 1 correspond to pairs of lattices $\mc M_1,\mc M_2\subset E_{p}^5$ such that 
 \[
 p \mc O_{E_{p}}^5 \subsetneq \mc M_1 \subsetneq \mc M_2 \subsetneq \mc O_{E_{p}}^5\]
 and $\mc M_1$ is the $\m H_p$-dual lattice of $p^{-1}\mc M_2$; 

 \item Given two pairs of lattices $(\mc M_1,\mc M_2)$ and $(\mc M_1',\mc M_2')$ corresponding to two distance 1 lattices $x_1,x_1'$ as above, $\{x_0,x_1,x_1'\}$ forms a simplex if and only if either $\mc M_1 \subsetneq \mc M_1'$ or $\mc M_1' \subsetneq \mc M_1$;

 \item The type-$0$ vertices $x_2$ of $\mc X$ at distance 2 correspond to lattices $\mc M \subset E_{p}^5$ such that $p^{-1}\mc M$ is $\m H_p$-selfdual and 
 \[
 p^2 \mc O_{E_{p}}^5 \subsetneq \mc M \subsetneq \mc O_{E_{p}}^5\]
 Furthermore, given a distance 2 vertex $x_2$ and the associated lattice $\mc M$, the distance 1 vertices $x_1$ between $x_0$ and $x_2$ correspond exactly to those pairs $(\mc M_1,\mc M_2)$ such that
 \[\mc M\subsetneq \mc M_1 \subsetneq \mc M_2.\]
Moreover, $x_1$ lies in the convex hull of $x_0$ and $x_2$ if and only if the associated $\mc M_1$ is minimal among all possible $x_1$.

If we fix an apartment containing $x_0$ and $x_2$, there may be either one or three such $x_1$, depending on whether the convex hull consists of two long edges or two short edges in the reduced building. In the latter case, let $(\mc M_1,\mc M_2)$, $(\mc M_1',\mc M_2')$, and $(\mc M_1'',\mc M_2'')$ be the three pairs corresponding to distance 1 vertices between $x_0$ and $x_2$. Then $(\mc M_1,\mc M_2)$ corresponds to the midpoint of $x_0$ and $x_2$ if and only if $\mc M_1 \subsetneq \mc M_1'$ and $\mc M_1 \subsetneq \mc M_1''$.
 
 \end{itemize} 

 \begin{ex}

 Take $\m H_p=\m J$. Then, $x_0$ relates to $\mc O_{E_{p}}^5$, $x_2$ relates to $p^2\mc O_{E_{p}}\oplus p\mc O_{E_{p}}^{3}\oplus \mc O_{E_{p}}$, $x_2'$ relates to $p^2\mc O_{E_{p}}^{2 }\oplus p\mc O_{E_{p}}\oplus \mc O_{E_{p}}^{2}$, $x_1$ relates to $p\mc O_{E_{p}}^{4}\oplus \mc O_{E_{p}}\subset p\mc O_{E_{p}}\oplus \mc O_{E_{p}}^{4}$, $x_1'$ relates to $p\mc O_{E_{p}}^{3}\oplus \mc O_{E_{p}}^{2}\subset p\mc O_{E_{p}}^{2}\oplus \mc O_{E_{p}}^{3}$, $x_1''$ relates to $p\mc O_{E_{p}}\oplus \mc O_{E_p} \oplus p\mc O_{E_{p}}^{2}\oplus \mc O_{E_p}\subset p\mc O_{E_{p}}\oplus \mc O_{E_p}^{2} \oplus p\mc O_{E_{p}}\oplus \mc O_{E_p}$. Then $x_1'$ is the unique vertex between $x_0$ and $x_2'$, and $x_1,x_1',x_1''$ are the three vertices between $x_0$ and $x_2$ in a fixed apartment (but there could be more such vertices in other apartments).
 
 \end{ex} 

 Reducing modulo $p^2$, we get the following construction:

\begin{construction}\label{cons:Qinert}

Let $p$ be inert in $E$. Consider the rank 5 free $\mc O_{E_p}/p^2$-module $(\mc O_{E_p}/p^2)^5$ and realize $\m H_p$ as a Hermitian form on it.
Then the vertices in $Q$ in Construction \ref{cons:nonsplit} can be identified with the $\mc O_{E_p}/p^2$-submodules 
\[
0 \neq \bar{\mc M} \subsetneq (\mc O_{E_{p}}/p^2)^5
\]
that are $\m H_p$-selfdual\footnote{Here we say that an $\mc O_{E_{p}}/p^2$-module $\bar{\mc M}_2$ is the dual of $\bar{\mc M}_1$ if $\bar{\mc M}_2=\{x\in (\mc O_{E_{p}}/p^2)^5 :  \m H_p(\bar{\mc M}_1,x)=0\}$.}. Furthermore, for any $g \in U_5^{\m H_p}(\Q_p)$ such that $gx_0\in Q$, we have $gx_0 = \bar{\mc M}$ 
if and only if there is $\lb \in E_{p}$ satisfying
\[
\lb g \in \GL_5(E_{p}) \cap \Mat_{5 \times 5}(\mc O_{E_{p}})\quad \text{and}\quad \lb g (\mc O_{E_{\rho_p}}/p^2)^5 = \bar{\mc M}.
\] 
The vertices in $\mc X$ at distance 1 can be identified with pairs of $\mc O_{E_p}/p^2$-submodules 
\[
(p\mc O_{E_{p}}/p^2)^5 \subsetneq \bar{\mc M}_1 \subsetneq  \bar{\mc M}_2 \subsetneq (\mc O_{E_{p}}/p^2)^5.
\]
such that $\bar{\mc M}_1$ is the $\m H_p$-dual of $p \bar{\mc M}_2$. A 2-simplex containing $x_0$ consists of two such pairs $(\bar{\mc M}_1,\bar{\mc M}_2)$ and $(\bar{\mc M}_1',\bar{\mc M}_2')$ such that 
\[
(p\mc O_{E_{p}}/p^2)^5 \subsetneq \bar{\mc M}_1 \subsetneq \bar{\mc M}_1' \subsetneq  \bar{\mc M}_2' \subsetneq \bar{\mc M}_2 \subsetneq (\mc O_{E_{p}}/p^2)^5.
\] 
A vertex at distance 1 related to $(\bar{\mc M}_1,\bar{\mc M}_2)$ is between $x_0$ and a vertex at distance 2 related to $\bar{\mc M}$ if $\bar{\mc M}\subsetneq \bar{\mc M}_1\subsetneq (\mc O_{E_{p}}/p^2)^5$, and moreover, it lies in the convex hull if the related $\bar{\mc M}_1$ is minimal.

\end{construction}

\subsubsection{Matching to Gates}
Now we can match $s_x$ to $x \in Q$. 

In the split case $p = \rho_p \bar \rho_p$, choose $\m B_p$ as in Lemma \ref{lem:Dtrivmodn}. Then 
\[
\m B_p^{-1}\eta(S_{\bar \Lambda_{0,p}})\m B_p \subseteq \GL_5(E_{\rho_p}) \cap \Mat_{5 \times 5}(\mc O_{E_{\rho_p}})\]
and 
\[\m B_p^{-1}\eta(S_{\bar \Lambda_{0,p}})\m B_p\cap \Mat_{5 \times 5}(\rho_p\mc O_{E_{\rho_p}})=\emptyset.
\]
So Lemma \ref{lem:Dtrivmodn} gives a reduction-mod-$\rho_p$ map as an injection
\[
\m B_p^{-1} \eta(S_{\bar \Lambda_{0,p}}) \m B_p \hookrightarrow \Mat_{5 \times 5}(\mc O_E/\rho_p). 
\]
Now we apply Construction \ref{cons:Qsplit}. Note that the condition $g (\mc O_{E_{\rho_p}}/\rho_p)^5 = \bar{\mc M}$ only depends on the reduction of $g$ mod $\rho_p$. In total, for each vertex $\bar{\mc M} \in Q$, we have that $s_{\bar{\mc M}}$ is the element in $S_{\bar \Lambda_{0,p}}$ such that 
\[
(\m B_p^{-1} \eta(s_{\bar{\mc M}}) \m B_p )(\mc O_{E_{\rho_p}}/\rho_p)^5 = \bar{\mc M}.
\]

In the inert case, we instead choose $\m B_{p^2}$ as in Lemma \ref{lem:Dtrivmodn}. Then similarly, 
\[
p\m B_{p^2}^{-1}\eta(S_{\bar \Lambda_{0,p}})\m B_{p^2} \subseteq \GL_5(E_{p}) \cap \Mat_{5 \times 5}(\mc O_{E_{p}})
\]
and
\[p\m B_{p^2}^{-1}\eta(S_{\bar \Lambda_{0,p}})\m B_{p^2}\cap \Mat_{5 \times 5}(p^2\mc O_{E_{p}})=\emptyset.\]
So Lemma \ref{lem:Dtrivmodn} gives a reduction-mod-$p^2$ map as an injection
\[
\m B_{p^2}^{-1} \eta(pS_{\bar \Lambda_{0,p}}) \m B_{p^2} \hookrightarrow \Mat_{5 \times 5}(\mc O_E/p^2). 
\]
We similarly input Construction \ref{cons:Qinert}. In total, for each vertex $\bar{\mc M} \in Q$, we have that $s_{\bar{\mc M}}$ is the element in $S_{\bar \Lambda_{0,p}}$ such that 
\[
(\m B_{p^2}^{-1} \eta(ps_{\bar{\mc M}}) \m B_{p^2} )(\mc O_{E_{p}}/p^2)^5 = \bar{\mc M}.
\]

\section{Finding Gates} \label{sec:findinggates}

Choose a prime $p \neq 2,7$ and let $e$ be $1$ if $p$ is split in $E$ and $2$ if $p$ is inert in $E$. In practice, we focus on $p^e=9$ or $11$, which correspond to the smallest inert and split cases respectively.

\subsection{Basic Reduction}

\begin{lem}\label{lem:otherfields}
Let $x \in \Lambda_D^{\max} \setminus \mc O_E$ such that $\iota(x)x = p^e$. Then 
\begin{itemize}

    \item $L' := E[x] \subseteq D$ is a CM field of degree 5 over $E$ on which $\iota$ acts as complex conjugation.

    \item Write $M'=(L')^\iota$ which is a totally real field of degree 5 over $\Q$. Write $x=\alpha+\rho_2\beta$ for $\alpha,\beta\in M'$ and $\rho_2=(1+\sqrt{-7})/2$. Then $M'=\Q(\alpha)=\Q(\beta)=\Q(2\alpha+\beta)$. 
    \item  $7\alpha,7\beta,2\alpha+\beta\in\mc O_{M'}\cap \Lambda_{D}^{\max}$. Furthermore $\alpha,\beta\in \mc O_{M'}$ when $M'/\Q$ is unramified at $7$. 
    
    \item The absolute value of all the real embeddings of $\beta$ (resp. $2\alpha+\beta$) are smaller than $\sqrt{4p^e/7}$ (resp. $\sqrt{4p^e}$).
    
    \item  The discriminant \[\Disc(M') \leq \begin{cases}
        C \cdot (4 p^e/7)^{10}&\quad M'\ \text{not ramified at }7,\\
        C \cdot (4 p^e)^{10}&\quad M'\ \text{ramified at }7,
    \end{cases}\]
    where the constant $C\approx 0.134288$.
\end{itemize}
\end{lem}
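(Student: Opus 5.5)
The plan is to treat $x$ as an element of the division algebra $D$ of prime degree $5$ over $E$, and to read off everything from the fact that $\iota(x)x=p^e$ is central.

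\emph{Step 1 (structure of $L'$).} Since $D$ is a division algebra of prime degree $5$ and $x\notin E$, the subalgebra $E[x]$ is a field of degree $5$ over $E$. From $\iota(x)x=p^e\in\Q^\times$ we get $\iota(x)=p^e x^{-1}\in E[x]$. Hence $\iota$ preserves $L'=E[x]$ and acts there as a field automorphism. Because $L'$ is commutative, the anti-automorphism $\iota$ is an automorphism of $L'$; it restricts to conjugation on $E$, so it has order exactly $2$.

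So $M'=(L')^\iota$ has degree $5$ over $\Q$ and $L'=M'E$. To see that $M'$ is totally real and $\iota$ is complex conjugation, use positivity. The form $\mathrm{Trd}(\iota(y)y)$ is positive definite, because $G_\infty$ is compact, or equivalently because $\eta$ turns $\iota$ into conjugate transpose (\S\ref{sec:Gcons}). Restricting this form to $L'$ shows that $\mathrm{Tr}_{L'/\Q}(\iota(y)y)>0$ for $y\neq 0$. The standard criterion then says $\iota$ is complex conjugation in every complex embedding, so $L'$ is CM.

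\emph{Step 2 (the decomposition $x=\alpha+\rho_2\beta$).} Write $x=\alpha+\rho_2\beta$ with $\alpha,\beta\in M'$; this is possible since $L'=M'\oplus\rho_2M'$. Since $\rho_2\notin M'$ and $x$ generates $L'$ over $E$, we get $\beta\notin\Q$. Because $[M':\Q]=5$ is prime, $\Q(\beta)=M'$. For $\alpha$ and $2\alpha+\beta$, I would expand the norm:
\[
p^e=\iota(x)x=\alpha^2+\alpha\beta+2\beta^2 ,
\]
using $\rho_2+\bar\rho_2=1$ and $\rho_2\bar\rho_2=2$. Completing the square gives
\[
4p^e=(2\alpha+\beta)^2+7\beta^2 .
\]
If $\alpha$ were rational, this would be a nontrivial quadratic relation for $\beta$ over $\Q$, which is impossible for an element of degree $5$. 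The same argument applied to $2\alpha+\beta$ shows it is not rational either. So both generate $M'$, since its degree is prime.

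\emph{Step 3 (integrality).} Since $x$ lies in the order $\Lambda_D^{\max}$, it is integral, so $x\in\mc O_{L'}$. Because $\Lambda_D^{\max}$ is $\iota$-stable (Proposition~\ref{prop iotastable}), $\iota(x)=\bar x$ lies there too. Then
\[
x+\bar x=2\alpha+\beta\in\mc O_{M'}\cap\Lambda_D^{\max},
\qquad
x-\bar x=\sqrt{-7}\,\beta .
\]
From the second identity, $7\beta=-\sqrt{-7}(x-\bar x)\in\Lambda_D^{\max}$, and it is fixed by $\iota$, so it lies in $\mc O_{M'}$. Then $7\alpha=\tfrac72(2\alpha+\beta)-\tfrac12\cdot7\beta$. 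To avoid the factor $\tfrac12$, I would instead write $7\alpha$ as an $\mc O_E$-combination of $x$ and $\bar x$; the denominators are only $\sqrt{-7}$, since $2$ is invertible in the appropriate sense via $\rho_2$.

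If $M'/\Q$ is unramified at $7$, then $L'=M'E$ is ramified at $7$ only through $E$. In that case $\mc O_{L'}=\mc O_{M'}\otimes\mc O_E$, whose $\Z$-basis is $\{1,\rho_2\}$ over $\mc O_{M'}$, and this gives $\alpha,\beta\in\mc O_{M'}$ directly.

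\emph{Step 4 (size bounds).} Apply the identity $4p^e=(2\alpha+\beta)^2+7\beta^2$ in each real embedding of $M'$; both squares are nonnegative there. This immediately gives $|\beta|<\sqrt{4p^e/7}$ and $|2\alpha+\beta|<\sqrt{4p^e}$. The inequalities are strict because $\beta\neq0$ and $2\alpha+\beta\neq0$ by Step 2.

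\emph{Step 5 (discriminant bound).} Let $\gamma$ be either $\beta$ (in the unramified-at-$7$ case) or $2\alpha+\beta$ (in general); in both cases $\gamma\in\mc O_{M'}$ generates $M'$. Then $\Disc(M')$ divides $\mathrm{disc}(\gamma)=\prod_{i<j}(\gamma_i-\gamma_j)^2$. I then need to maximize this product over real points $\gamma_1,\dots,\gamma_5\in[-R,R]$. By scaling, the product equals $R^{20}$ times the maximum on $[-1,1]$, which is $R^{20}$ times a constant. That maximum is classical, attained at the Fekete points, which are $\pm1$ together with the zeros of $P_4'$ (Legendre polynomial). This should produce the constant $C\approx0.134288$. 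With $R^{20}=(R^2)^{10}=(4p^e/7)^{10}$, respectively $(4p^e)^{10}$, the stated bounds follow.

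\emph{Main obstacle.} The main obstacle is pinning down the precise constant $C$. I would do this by computing the Vandermonde maximum via Stieltjes' electrostatic characterization. The denominator bookkeeping at $7$ in Step 3 is routine but fiddly.
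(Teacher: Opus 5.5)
Your proposal is correct and is essentially the paper's proof. The minor differences are that you use positivity of $\Trd(\iota(y)y)$ where the paper uses compactness of $G_\infty$, and that your Fekete points $0,\pm\sqrt{3/7},\pm1$ give the exact constant $C=2^{12}\cdot 3^3/7^7\approx 0.134288$, which the paper only obtains numerically.

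One line is misjustified: $\beta\notin\Q$ does not follow from $x$ generating $L'$ over $E$. If $\beta\in\Q$ then $E[x]=E[\alpha]$, which could a priori be all of $L'$. The correct reason is the norm equation $\alpha^2+\beta\alpha+2\beta^2=p^e$, which would force $\alpha$, hence $E[x]$, to have degree at most $2$; this is exactly the argument you already give for $\alpha$.

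In Step 3, the $\mc O_E$-combination you are looking for is $7\alpha=-\sqrt{-7}\,\bigl(\rho_2\iota(x)-\bar\rho_2 x\bigr)$.
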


\begin{proof}

From our construction, $L'$ is an abelian division algebra over $E$ not equal to $E$. Since all maximal abelian subalgebras of $D$ are degree $5$ over $E$, $L'$ must be maximal and also degree $5$. 

Since $\iota(x) = p^e x^{-1}$, $\iota(x) \in E(x) = L'$ which forces $\iota(L') = L'$. Then, since $\{x \in D^\times_\infty : \iota(x)x = 1\}$ is compact, so is $\{x \in (L')^\times_\infty : \iota(x)x = 1\}$. This is only possible if $M'=L'^\iota$ is totally real, in which case $\iota$ has to act as complex conjugation since it is an involution.

Let $x = \alpha + \rho_2 \beta$ with $\alpha,\beta\in M'$. Then,
\[
\alpha^2 + \alpha \beta + 2 \beta^2 = p^e. 
\]
We cannot have $\Q(\alpha)$, $\Q(\beta)$ or $\Q(2\alpha+\beta)$ being strictly contained in $M'$, or equivalently $\alpha$, $\beta$ or $2\alpha+\beta$ are contained in $\Q$. Otherwise using the above equation $E[x]/E$ is of degree $1$ or $2$, which is impossible. 

By direct calculation, we have 
\[7\alpha=(\rho_2\iota(x)-\bar \rho_2 x)\sqrt{-7},\ 7\beta=(x-\iota(x))\sqrt{-7}\in \sqrt{-7}\Lambda_D^{\max}\cap M'\subset\mc O_{M'},\]
\[2\alpha+\beta=x+\iota(x)\in \Lambda_{D}^{\max}\cap M'\subset\mc O_{M'}.\]
Since the norm form as a polynomial in $\alpha$ has a real root: 
\[
(\beta^{(i)})^2 - 4(2 (\beta^{(i)})^2 - p^e) \geq 0 \implies  |\beta^{(i)}| \leq \sqrt{4 p^e/7}, 
\] 
where $\beta^{(i)}$ denote the image of $\beta$ under the $i$-th real embedding $M'\hookrightarrow\R$ for $i=1,\dots,5$. 
Since $(2\alpha+\beta)^{2}=4p^e-7\beta^2$, we have that every real embedding of $2\alpha+\beta$ is of absolute value not greater than $\sqrt{4p^e}$.

If $M'$ is not ramified at $7$, we have that $\alpha, \beta \in \mc O_{M'}$ since $(\Lambda_D^{\max})^\times \cap L'^\times \subseteq \mc O_{L'}$ and $\mc O_{L'} = \mc O_{M'} \mc O_E$. Therefore, we have that $\Z[\beta]$ is an order in $\mc O_{M'}$ and 
\[
|\Disc(M')| \leq |\Disc(\Z[\beta])| = \prod_{1\leq i < j\leq 5} |\beta^{(i)} - \beta^{(j)}|^2 
\]
If we had $-1 \leq \beta^{(i)} \leq 1$, we can numerically maximize the product on the right-hand side of the above equation and bound it by $C \approx 0.134288$. Scaling this by $(4 p^e/7)^{20/2}$ gives
\[
|\Disc(M')| \leq C \cdot (4 p^e/7)^{10}.
\]
If $M'$ is ramified at $7$,  as in the unramified case, we have
\[
|\Disc(M')| \leq |\Disc(\Z[2\alpha+\beta])| \leq   C \cdot (4 p^e)^{10}.
\]

\end{proof}

\begin{cor}\label{cor gates}

Let $x\in\Lambda_D^{\max}$ such that $\iota(x)x=p^e$. Then $x\in \bar \rho_p \Lambda_D^{\max} \cup \rho_p \Lambda_D^{\max}$ (resp. $x\in p\Lambda_D^{\max}$) in the case $E/\Q$ is split (resp. inert) over $p$ if and only if $x\in\mc O_E$.

\end{cor}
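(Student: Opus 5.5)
We prove the two implications separately. The reverse one uses that $\mc O_E$ is a PID. The forward one divides out the relevant element of $\mc O_E$ and applies a Kronecker-type argument to the resulting norm-one element.

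\emph{If $x\in\mc O_E$.} Since $\iota$ restricts to complex conjugation on $E$, the hypothesis reads $x\bar x=p^e$. The ring $\mc O_E=\Z[\rho_2]$ is a PID with unit group $\{\pm1\}$.
\begin{itemize}
\item If $p$ is inert, then $p$ is prime in $\mc O_E$. So $x\bar x=p^2$ forces $x=\pm p$, hence $x\in p\Lambda_D^{\max}$.
\item If $p=\rho_p\bar\rho_p$ is split, then $x\bar x=p$ forces $x=\pm\rho_p$ or $x=\pm\bar\rho_p$. Hence $x\in\rho_p\Lambda_D^{\max}\cup\bar\rho_p\Lambda_D^{\max}$.
\end{itemize}

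\emph{Only if.} Suppose first $p$ is split and $x=\pi y$ with $\pi\in\{\rho_p,\bar\rho_p\}$ and $y\in\Lambda_D^{\max}$. Since $\pi$ is central and $\iota(\pi)=\bar\pi$, we get $p^e=\iota(x)x=\bar\pi\pi\,\iota(y)y=p\,\iota(y)y$. Hence $\iota(y)y=1$. The inert case $x=py$ gives $\iota(y)y=1$ in the same way. It therefore suffices to show that any $y\in\Lambda_D^{\max}$ with $\iota(y)y=1$ lies in $E$. Then $y\in\Lambda_D^{\max}\cap E=\mc O_E$, since a maximal order meets its center in the ring of integers, and so $x\in\mc O_E$.

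Suppose instead $y\notin E$. Run the first two steps of the proof of Lemma~\ref{lem:otherfields} with $p^e$ replaced by $1$. We get that $L':=E[y]$ is a CM field of degree $5$ over $E$, on which $\iota$ acts as complex conjugation. Moreover, $y$ is integral in $L'$, because $\Lambda_D^{\max}\cap L'\subseteq\mc O_{L'}$ (it is an order of $L'$).

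Since $y\bar y=1$ in the CM field $L'$, every archimedean absolute value of $y$ equals $1$. By Kronecker's theorem, $y$ is therefore a root of unity $\zeta$, and $E(\zeta)=L'$ has degree $10$ over $\Q$. This forces $\varphi(\mathrm{ord}\,\zeta)$ to divide $10$. The only possibility giving degree exactly $10$ is $L'=\Q(\zeta_{11})$, which does not contain $\sqrt{-7}$: its unique quadratic subfield is $\Q(\sqrt{-11})$. All other possibilities give $E(\zeta)$ of degree at most $2$ over $\Q$, contradicting $[E(\zeta):E]=5$. Hence $y\in E$, which completes the argument.

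The main step to check carefully is the Kronecker argument. In particular one must confirm that $y$ is genuinely integral, which is exactly the statement $\Lambda_D^{\max}\cap L'\subseteq\mc O_{L'}$ used in Lemma~\ref{lem:otherfields}.
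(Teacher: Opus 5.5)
Your proof is correct and takes the same route as the paper. The direction $x\in\mc O_E$ goes through the PID $\mc O_E$. For the other direction, you divide by $\rho_p$, $\bar\rho_p$ or $p$ to obtain an integral $y\in\mc O_{L'}$ with $\iota(y)y=1$, and then show $y=\pm1$. Your Kronecker plus cyclotomic-subfield argument supplies the step the paper calls ``straightforward''. The paper's wording there (``the only units in $\mc O_{L'}$ are $\pm1$'') should really read that the only norm-one units, i.e.\ the roots of unity of $L'$, are $\pm1$.
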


\begin{proof}

If $x\in\mc O_E$, then noting that $\mc O_E$ is a PID, we have that the ideal $(x)$ equals $(\rho_p)$ or $(\bar \rho_p)$ (resp. $(p)$) in the split case (resp. inert case). This implies that $x\in \bar \rho_p \Lambda_D^{\max} \cup \rho_p \Lambda_D^{\max}$ (resp. $x\in p\Lambda_D^{\max}$). Conversely if $x$ is not in $\mc O_E$, then $L'=E[x]$ is a CM field of degree 5 over $E$ and $x$ is in $\mc O_{L'}$. Assume that $x\in \bar \rho_p \Lambda_D^{\max} \cup \rho_p \Lambda_D^{\max}$ (resp. $x\in p\Lambda_D^{\max}$), then for $y=\rho_p^{-1}x$ or $\bar \rho_p^{-1}x$ (resp. $y=p^{-1}x$) in $\mc O_{L'}$, we have $\iota(y)y=1$. It is straightforward to check that the only units in $\mc O_{L'}$ are $1$ or $-1$. Then $y=\pm1$ and $x$ is in $E$, contradictory!

\end{proof}

\subsection{Quadratic Form, Characteristic Polynomial and Minimal Vector}\label{subsection QFCPMV}

We regard $(\Lambda_{D}^{\max})^{\iota}$ as a free $\Z$-module of rank 25 and consider the following quadratic form
\begin{equation}\label{eq quadformredtr}
(\Lambda_D^{\max})^{\iota}\rightarrow\Z,\quad \gamma\mapsto \Trd_{D/E}(\gamma^2), 
\end{equation}
induced by the reduced trace map. If $\gamma\notin E$, then $E(\gamma)$ is a  CM field of degree 5 over $E$, with $\Q(\gamma)=E(\gamma)^{\iota}$ being a totally real field of degree $5$ over $\Q$. Then
$\Trd_{D/E}(\gamma^2)=\mathrm{Tr}_{\Q(\gamma)/\Q}(\gamma^2)$ is a non-negative integer. If $\gamma\in E\cap (\Lambda_{D}^{\max})^{\iota}=\Z$, then
still $\mathrm{Tr}_{\Q(\gamma)/\Q}(\gamma^2)=5\gamma^2$ is a non-negative integer. As a result, the quadratic form is positive definite.  

Write $\gamma=\sum_{i=0}^4u^il_i$ with $l_i\in L$, then $\iota(\gamma)=\gamma$ implies that 
\[l_0=\bar l_0,\ l_1^\sigma=a^{-1}\bar l_4,\ l_2^{\sigma^2}=a^{-1}\bar l_3,\ l_3^{\sigma^3}=a^{-1}\bar l_2,\ l_4^{\sigma^4}=a^{-1}\bar l_1,\]
and thus
\[\Trd_{D/E}(\gamma^2)=\mathrm{Tr}_{M/\Q}(\sum_{i=0}^4l_i\bar l_i)\]
We write $l_0=m_0^0$, $l_1=m_1^0+\rho_2m_1^1$ and $l_2=m_2^0+\rho_2 m_2^1$ with $m_i^j\in M$, then we have
\begin{equation}\label{eq Trd beta2}
\begin{aligned}
\mathrm{Trd}_{D/E}(\gamma^2)=
\mathrm{Tr}_{M/\Q}( (m_0^0)^2+2((m_1^0)^2+m_1^0m_1^1+2(m_1^1)^2)+
\\2((m_2^0)^2+m_2^0m_2^1+2(m_2^1)^2))
\end{aligned}
\end{equation}
Notice that this equation concerns only  the totally real cyclic extension $M/\Q$. 

Let 
\[\vec{m}=((m_0^0)_{\Q},(m_1^0)_{\Q},(m_1^1)_{\Q},(m_2^0)_{\Q},(m_2^1)_{\Q})\] 
be a row vector in $\Q^{25}$, where $(m_i^j)_{\Q}$ denotes the corresponding row vector of $m_i^j$ in $\Q^5$ via the identification $M\cong \bigoplus_{i=0}^4\Q\alpha_i$. Using our discussion in \S \ref{subsection despLambdaDmaxiota}, each $\gamma\in(\Lambda_{D}^{\max})^{\iota}$ corresponds to a column vector $\vec{x}^T$ in $\Z^{25}$ satisfying
\[\vec{m}^T=B_{\max}\cdot\vec{x}^T.\] 
Then, consider the following explicit quadratic form on $\Z^{25}$:
\[Q_{\max}(\vec{x}):=\vec{x}\cdot B_{\max}^TA_0B_{\max}\cdot\vec{x}^T,\]
where we define
\[A_0=\diag\bigg(T,\begin{pmatrix}
    2T & T\\ T &4T
\end{pmatrix},\begin{pmatrix}
    2T & T\\ T &4T
\end{pmatrix}\bigg)
\quad\text{with}\quad T=(\mathrm{Tr}_{M/\Q}(\alpha_i\alpha_j))_{0\leq i,j\leq 4}.\]
By our discussion above, we have 
\[\Trd_{D/E}(\gamma^2)=Q_{\max}(\vec{x}).\]
It means that we have a concrete realization of the above quadratic form \eqref{eq quadformredtr}. Moreover, we may use the embedding \eqref{eq Dembedding} to calculate the characteristic polynomial of $\gamma$, which could be expressed as 
\begin{equation}\label{eq charpolygamma}
P_{\gamma}(t)=t^5+P_1(\vec x)t^4+P_2(\vec x)t^3+P_3(\vec x)t^2+P_4(\vec x)t+P_5(\vec x),   
\end{equation}
where for each $i=1,\dots,5$, $P_i(\vec{x})$ is a homogeneous polynomial of degree $i$ in $\Z[\vec{x}]$ (thus is of 25 variables). Using the explicit relation between $\vec{x}$ and $\gamma$, these polynomials $P_i(\vec x)$ could be calculated explicitly. In particular, we have $P_1(\vec x)=-\Trd_{D/E}(\gamma)$ and $Q_{\max}(\vec{x})=P_1(\vec x)^2-2P_2(\vec x)$.

Now we discuss a possible minimal short vector that can be picked. Consider a solution of $\iota(x)x=p^e$ in $\Lambda_D^{\max} \setminus \mc O_E$.
Using Lemma \ref{lem:otherfields},  we write $x=\alpha+\rho_2\beta$ such that 
\begin{itemize}

\item $\alpha,\beta$ lie in $(\Lambda_D^{\max})^{\iota}$ such that $M'=\Q(\alpha)=\Q(\beta)=\Q(2\alpha+\beta)$ is a totally real field of degree 5  and $L'=EM'$ is a CM field of degree 10 over $\Q$.

\item We have $7\alpha,7\beta,2\alpha+\beta\in \mc  O_{M'}\cap \Lambda_{D}^{\max}$. When $M'/\Q$ is unramified over $7$, we further have $\alpha,\beta\in \mc  O_{M'}$.

\item We have $\mathrm{Trd}_{D/E}(4\iota(x)x)=\mathrm{Trd}_{D/E}((2\alpha+\beta)^2)+\mathrm{Trd}_{D/E}(7\beta^2)=20p^e$.

\end{itemize}

Let $\gamma$ be an element in $\Lambda_D^{\max}\cap (\mc O_{M'}-\Z)$, such that $\Trd_{D/E}(\gamma^2)$ reaches the smallest value. Let $\vec{x}\in\Z^{25}$ be the vector related to $\gamma$. We have
\[Q_{\max}(\vec{x})=\Trd_{D/E}(\gamma^2)\leq (7/8)[\Trd_{D/E}((2\alpha+\beta)^2)+\mathrm{Trd}_{D/E}((7\beta)^2)/7]= 35p^e/2.\]

\subsection{Finding Subfields \lm{$L'\subset D$}}\label{subsection subfieldL'inD}

We explain how to list all possible maximal subfields $L'\subset D$, such that there exists an element $x\in L'\cap \Lambda_D^{\max} \setminus \mc O_E$ satisfying $\iota(x)x=p^e$. In particular, we necessarily have $x\in\mc O_{L'}$. 
Consider $M'=L'^{\iota}$, which is a totally real field of degree $5$ over $\Q$.

We need to check all CM fields $L'$ of degree $5$ over $E$, such that
\begin{itemize}

\item $L'$ can be embedded into $D$, or equivalently by the local-global compatibility, $L'$ is inert at $\rho_2$ and $\bar \rho_2$, or equivalently $M'=L'^{\iota}$ is inert at $2$.

\item By Lemma \ref{lem:otherfields}, $\Disc(M')\leq C\cdot 7^{10\varepsilon}\cdot (4 p^e/7)^{10}$ where $\varepsilon=0$ or $1$ depending on if $M'/\Q$ is unramified at $7$ or not. 

\item The equation $\iota(x)x=p^e$ has a solution in $\mc O_{L'} \setminus \mc O_E$.

\end{itemize}
The strategy is to list all $L'$ satisfying the first two conditions above, and then check one by one if the third condition is satisfied. 

To check if $\iota(x)x=p^e$ has a solution in $\mc O_{L'} \setminus \mc O_{E}$, a necessary (and effectively checkable) condition is that there is a principal ideal $I$ dividing $(p^e)$ and different from $(p)$ (resp. $(\rho_p)$ and $(\bar \rho_p)$) when $e=2$ (resp.  $e=1$) such that $\iota(I)I=(p^e)$. To find $I$, we only need to consider the decomposition of $(p^e)$ into the product of prime ideals, and check if we may divide these prime ideals into two multisets with their products equal to principal ideals $I$ and $\iota(I)$ respectively.  Remark that two solutions to $\iota(x)x=p^e$ such that $I=(x)$ differ by a unit $u\in\mc O_{L'}^\times$ satisfying $\iota(u)u=1$. Also, since the unit groups of $\mc O_{L'}$ and $\mc O_{M'}$ are of the same rank, both the kernel and the cokernel of the map $\mc O_{L'}^\times\rightarrow\mc O_{M'}^\times, y\mapsto \iota(y)y$ are finite. Thus $u$ is a root of unity. Finally, any CM field $L'$ of degree 5 over $E$ only has roots of unity $1$ and $-1$. Thus for each ideal $I$, there are at most 2 solutions $x,-x$ with $I=(x)$. Since there are only finitely many possible $I$, for each $L'$ there are finitely many solutions to $\iota(x)x=p^e$.

Using Sagemath, for $p^e=9$ or $11$ we checked the first 8000 CM fields $L'$ listed on the website LMFDB satisfying the first two conditions, and found around 200 fields among them having a solution $\iota(x)x=p^e$. We listed the related solutions as well. In particular, this list contains all the fields $L'$ such that $M'/\Q$ is unramified at $7$. If we furthermore  focus on the case where $M'/\Q$ are ramified at $7$, then for $p^e=9$ or $11$ there remain less than 40000 fields to be checked. When the discriminant becomes large it gets slower to check if a certain field $L'$ has a solution and to find the solutions. But in any case it is a one-time check. 

Hence at last we expect that for small $p^e$ we may list all CM fields $L'$ satisfying the above conditions, and $x\in \mc O_{L'} \setminus \mc O_{E}$ such that $\iota(x)x=p^e$. On the other hand, it is only a necessary condition, saying that some solutions $x$ might not be embedded into $\Lambda_D^{\max}$.

\subsection{Strategy}\label{subsection strategy}

Now, computing all the solutions to $\iota(x)x=p^e$ in $\Lambda_{D}^{\max} \setminus \mc O_{E}$ can be divided into the following three steps:

\begin{enumerate}

\item List all the possible totally real fields $M'/\Q$ of degree 5 and the related CM field $L'=EM'$, such that there exists a solution $\iota(x)x=p^e$ in $\mc O_{L'} \setminus \mc O_E$, and for each $L'$ find all the solutions $x$. Using Lemma \ref{lem:otherfields} we need to search over all possible totally real fields $M'$ with bounded discriminant, which can be done for small $p^e$.

\item List all $\gamma\in (\Lambda_{D}^{\max})^{\iota}$ such that $\mathrm{Trd}_{D/E}(\gamma^2)\leq 35p^e/2$. Equivalently, list all $\vec{x}\in \Z^{25}$ such that $Q_{\max}(\vec{x})\leq 35p^e/2$. Noting that here $Q_{\max}$ is a specific positive definite quadratic form of 25 variables and explicit coefficients. The main problem is that the number of solutions is gigantic---when $p^e=11$ and $35p^e/2=375/2$, there are around $10^{14}$ solutions $\vec{x}$, thus can hardly  be fully listed and further checked in Step (3) below. On the other hand, by taking a smaller bound, we are still able to find certain $\gamma$, which possibly leads to some solutions $x$. 

\item For each $\gamma$ in the list of Step (2), check if the field $M'=\Q(\gamma)$ lies in the list of Step (1), which can be done by calculating the characteristic polynomial $P_{\gamma}(t)$ of $\gamma$. 
After that, for any $x$ in $\mc O_{L'} \setminus \mc O_{E}$ such that $\iota(x)x=p^e$, we may express it as an explicit $\Q$-coefficient polynomial of $\gamma$ and $\rho_2$. Then, we may check directly if $x$ lies in $\Lambda_{D}^{\max}$ or not. This will theoretically give all the solutions to $\iota(x)x=p^e$ in $\Lambda_{D}^{\max} \setminus \mc O_E$. Combining with \S \ref{ssec explicitgates} and Corollary \ref{cor gates}, we find all the gates.

\end{enumerate}

More precisely, in Step (3) if we write $x=\alpha+\rho_2\beta$, then we have $M'=\Q(\alpha)=\Q(\beta)=\Q(2\alpha+\beta)$ and $\gamma$ can be considered to be a shortest vector in $\Z[7\alpha,7\beta,2\alpha+\beta] \setminus\Z\subset (\Lambda_{D}^{\max})^{\iota} \setminus \Z$. Thus by
solving the equation $\iota(x)x=p^e$ in $L'$, we may precompute the characteristic polynomial of $\gamma$ for each candidate $L'$ in Step (1). In that sense, we are essentially required to solve the following problem:

\begin{problem}\label{prob Qmaxboundcharpoly}

Find all vectors $\vec{x}\in\Z^{25}$ such that $Q_{\max}(\vec{x})\leq 35p^e/2$ and $(P_{1}(\vec{x}),P_{2}(\vec{x}),P_{3}(\vec{x}),P_{4}(\vec{x}),P_{5}(\vec{x}))$ lies in a fixed precomputed list of vectors in $\Z^5$. 
    
\end{problem}

The above problem is quite concrete. We hope that some techniques, including the modulo $n$ technique for small $n$, will reduce the complexity of the problem and lead to an effective algorithm of finding all the solutions for small $p^e$ (for instance $p^e=9$ or $11$). However solving this problem or even determining if it is computationally tractable on modern hardware is far out of the scope of the authors' current expertise.

\subsection{Initial Data}\label{subsection initialBmax}

For convenience of the reader, we list our choice $B_{\max}$ and $Q_{\max}$ in the setting of \S \ref{subsection initialdata}. 

Implementing the procedure of \S \ref{subsection despLambdaDmaxiota} in Sagemath, finds an explicit $25\times 25$ matrix $B_{\max}$ with rows:
\begin{align*}
\text{Row 1: } & (\tfrac{1}{11}, 0, \dots, 0);\\
\text{Row 2: } & (0, \tfrac{1}{11}, 0, \dots, 0);\\
\text{Row 3: } & (0, 0, \tfrac{1}{11}, 0, \dots, 0);\\
\text{Row 4: } & (\tfrac{6}{11}, \tfrac{2}{11}, \tfrac{1}{11}, 1, 0, \dots, 0);\\
\text{Row 5: } & (\tfrac{5}{11}, \tfrac{8}{11}, \tfrac{3}{11}, 0, 1, 0, \dots, 0);\\
\text{Row 6: } & (\tfrac{3}{121}, \tfrac{8}{121}, \tfrac{2}{121}, 0, 0, \tfrac{1}{11}, 0, \dots, 0);\\
\text{Row 7: } & (\tfrac{6}{121}, \tfrac{5}{121}, \tfrac{4}{121}, 0, 0, 0, \tfrac{1}{11}, 0, \dots, 0);\\
\text{Row 8: } & (\tfrac{10}{121}, \tfrac{1}{121}, \tfrac{3}{121}, 0, 0, 0, 0, \tfrac{1}{11}, 0, \dots, 0);\\
\text{Row 9: } & (\tfrac{7}{121}, \tfrac{4}{121}, \tfrac{1}{121}, 0, 0, 0, 0, 0, \tfrac{1}{11}, 0, \dots, 0);\\
\text{Row 10: } & (\tfrac{5}{121}, \tfrac{6}{121}, \tfrac{7}{121}, 0, 0, 0, 0, 0, 0, \tfrac{1}{11}, 0, \dots, 0);\\
\text{Row 11: } & (\tfrac{97}{121}, \tfrac{178}{121}, \tfrac{226}{121}, 0, 0, \tfrac{3}{11}, 0, 0, 0, 0, 2, 0, \dots, 0);\\
\text{Row 12: } & (\tfrac{194}{121}, \tfrac{81}{121}, \tfrac{210}{121}, 0, 0, 0, \tfrac{3}{11}, 0, 0, 0, 0,  2, 0, \dots, 0);\\
\text{Row 13: } & (\tfrac{162}{121}, \tfrac{113}{121}, \tfrac{97}{121}, 0, 0, 0, 0, \tfrac{3}{11}, 0, 0, 0, 0, 2, 0, \dots, 0);\\
\text{Row 14: } & (\tfrac{2485}{121}, \tfrac{2146}{121}, \tfrac{1807}{121}, 0, 0, 16, 4, 8, \tfrac{201}{11}, 6, 12, 4, 2, 22, 0, \dots, 0);\\
\text{Row 15: } & (\tfrac{1291}{121}, \tfrac{1162}{121}, \tfrac{791}{121}, 0, 0, 10, 20, 12, 4, \tfrac{25}{11}, 10, 16, 6, 0, 22, 0, \dots, 0);\\
\text{Row 16: } & (0, 0, 0, 0, 0, 0, 0, 0, 0, 0, 0, 0, 0, 0, 0, \tfrac{1}{121}, 0, \dots, 0);\\
\text{Row 17: } & (\tfrac{4}{121}, \tfrac{2}{121}, \tfrac{5}{121}, 0, 0, \tfrac{1}{121}, \tfrac{10}{121}, \tfrac{6}{121}, \tfrac{1}{121}, \tfrac{7}{121}, \tfrac{7}{121}, \tfrac{4}{121}, \tfrac{1}{121}, 0, 0, \tfrac{2}{121}, \\
&\ \tfrac{1}{11}, 0, \dots, 0);\\
\text{Row 18: } & (\tfrac{9}{121}, \tfrac{6}{121}, \tfrac{5}{121}, 0, 0, \tfrac{10}{121}, \tfrac{9}{121}, \tfrac{10}{121}, \tfrac{5}{121}, \tfrac{9}{121}, \tfrac{5}{121}, \tfrac{6}{121}, \tfrac{7}{121}, 0, 0, \tfrac{7}{121}, 0,\\
&\ \tfrac{1}{11}, 0, \dots, 0);\\
\text{Row 19: } & (\tfrac{7}{121}, \tfrac{9}{121}, \tfrac{1}{121}, 0, 0, \tfrac{1}{121}, \tfrac{7}{121}, 0, \tfrac{7}{121}, \tfrac{1}{121}, \tfrac{8}{121}, \tfrac{3}{121}, \tfrac{9}{121}, 0, 0, \\
&\ \tfrac{6}{121}, 0, 0, \tfrac{1}{11}, 0, \dots, 0);\\
\text{Row 20: } & (\tfrac{10}{121}, \tfrac{6}{121}, \tfrac{4}{121}, 0, 0, \tfrac{5}{121}, \tfrac{8}{121}, \tfrac{1}{121}, \tfrac{1}{121}, \tfrac{6}{121}, \tfrac{5}{121}, \tfrac{6}{121}, \tfrac{7}{121}, 0, 0, \tfrac{9}{121}, 0, 0, 0, \\
&\ \tfrac{1}{11}, 0, \dots, 0 ) ;\\
\text{Row 21: } & (12, 18, 4, 0, 0, 12, 20, 4, 14, 16, 0, 0, 0, 0, 0, \tfrac{597}{121}, 0, 0, 0, 0, 22, 0, 0, 0, 0);\\
\text{Row 22: } & (\tfrac{2146}{121}, \tfrac{710}{121}, \tfrac{81}{121}, 0, 0, \tfrac{839}{121}, \tfrac{162}{121}, \tfrac{1888}{121}, \tfrac{1323}{121}, \tfrac{65}{121}, \tfrac{1517}{121}, \tfrac{2388}{121}, \tfrac{597}{121}, 0, 0, \tfrac{1194}{121}, \tfrac{113}{11},\\
&\ 0, 0, 0, 0, 22, 0, 0, 0 );\\
\text{Row 23: } & (\tfrac{1501}{121}, \tfrac{920}{121}, \tfrac{565}{121}, 0, 0, \tfrac{162}{121}, \tfrac{1017}{121}, \tfrac{1372}{121}, \tfrac{1533}{121}, \tfrac{291}{121}, \tfrac{323}{121}, \tfrac{920}{121}, \tfrac{1517}{121}, 0, 0, \tfrac{1517}{121}, 0, \tfrac{113}{11},\\
&\ 0, 0, 0, 0, 22, 0, 0);\\
\text{Row 24: } & (\tfrac{549}{121}, \tfrac{1501}{121}, \tfrac{597}{121}, 0, 0, \tfrac{1323}{121}, \tfrac{65}{121}, 2, \tfrac{1033}{121}, \tfrac{1565}{121}, \tfrac{2114}{121}, \tfrac{1791}{121}, \tfrac{49}{121}, 0, 0, \tfrac{920}{121}, 0, 0, \tfrac{113}{11},\\
&\ 0, 0, 0, 0, 22, 0);\\
\text{Row 25: } & (\tfrac{12988}{121}, \tfrac{23910}{121}, \tfrac{16908}{121}, 0, 0, \tfrac{16053}{121}, \tfrac{18570}{121}, \tfrac{28911}{121}, \tfrac{13181}{121}, \tfrac{18344}{121}, \tfrac{8309}{121}, \tfrac{8906}{121}, \tfrac{1517}{121},\\
&\ 0, 0, \tfrac{26669}{121}, 176, 132, 220, \tfrac{597}{11}, 44, 154, 176, 132, 242
).
\end{align*}

Using this $B_{\max}$, we find an explicit $Q_{\max}$ such that the related symmetric matrix $B_{\max}^TA_0 B_{\max}$ has coefficients that are all non-negative integers. Since it is already explicit, we won't display its entries for brevity. Neither will we display the $50 \times 50$ matrix $A_{\max}$, although such a matrix is helpful in verifying if an element $x$ in $D$ lies in $\Lambda_{D}^{\max}$. To compensate, by observing $\eta(x)\in\GL_{5}(L)$ one can still judge if $x$ is in $\Lambda_{D}^{\max}$ or not.

\subsection{Finding an Explicit Gate Element}\label{subsection findingexplicitgates} 

In this part, we explain our strategy for finding an explicit gate element. We assume $p^e=9$ or $11$.

\begin{enumerate}

\item We listed all the totally real fields $M'$ with the CM fields $L'$ having a solution of $\iota(x)x=p^e$ with
\begin{itemize}
\item $M'/\Q$ inert at 2;
\item $\Disc(M')$ bounded by $0.134288\cdot (44/7)^{10}\approx 12959555$ for $M'/\Q$ unramified at 7, and $0.134288\cdot 44^{10}\approx 3.6523\times 10^{15}$ for $M'/\Q$ ramified at 7.

\end{itemize}
We checked the first 8000 totally real fields $M'$ listed on LMFDB and found 139 candidates that have a norm solution in $L'$. Here, we list the smallest and the largest $M'$ and their discriminant: $\Q[t]/(t^5 - 2t^4 - 3t^3 + 5t^2 + t - 1),\ 36497,\ \Q[t]/(t^5 - 2t^4 - 9t^3 + 7t^2 + 7t + 1),\ 8733025.$  

\item We listed all $\vec{x}$ such that $Q_{\max}(\vec{x})\leq 40$. It turns out that there are $463798$ solutions. We restore the corresponding $\gamma$.

\item We checked for $\gamma$ in Step (2) if $\Q(\gamma)$ occurs in the precomputed list in Step (1). It turns out that there are only several  hundred candidates.

\item  We checked if each related solution $x$ in Step (3) as a polynomial of $\rho_2$ and $\gamma$ is in $\Lambda_D^{\max}$. 

\end{enumerate}

\begin{ex}

Within the above range we already found an element $\vec{x}$ and a corresponding $\gamma\in(\Lambda_{D}^{\max})^{\iota}$ such that
\begin{itemize}
\item $Q_{\max}(\vec{x})=\Trd_{D/E}(\gamma^2)=34$;

\item $(m_0^0,m_1^0,m_1^1,m_2^0,m_2^1)=(
-\frac{2}{11}\alpha_1^4 + \frac{7}{11}\alpha_1^2 - \frac{3}{11}\alpha_1 - \frac{2}{11}, 
-\frac{34}{121}\alpha_1^4 + \frac{36}{121}\alpha_1^3 + \frac{64}{121}\alpha_1^2 - \frac{10}{121}\alpha_1 + \frac{61}{121}, 
\frac{30}{121}\alpha_1^4 - \frac{46}{121}\alpha_1^3 - \frac{28}{121}\alpha_1^2 + \frac{80}{121}\alpha_1 - \frac{125}{121}, 
\frac{45}{121}\alpha_1^4 + \frac{43}{121}\alpha_1^3 - \frac{229}{121}\alpha_1^2 - \frac{9}{11}\alpha_1 + \frac{177}{121}, 
\frac{3}{121}\alpha_1^4 + \frac{19}{121}\alpha_1^3 + \frac{17}{121}\alpha_1^2 - \frac{5}{11}\alpha_1 - \frac{85}{121}
)$.

\item $\gamma$ has characteristic polynomial $x^5 - 17x^3 + 21x^2 + 21x - 25$, which defines a field $M'$ of discriminant $89417$ and unramified at $7$. 

\item For $\alpha = \frac{1}{11}\gamma^4 + \frac{2}{11}\gamma^3 - \frac{8}{11}\gamma^2 + \frac{3}{11}\gamma - \frac{17}{11}, 
\beta = -\frac{3}{11}\gamma^4 - \frac{4}{11}\gamma^3 + \frac{43}{11}\gamma^2 - \frac{9}{11}\gamma - \frac{42}{11}$, we checked that $x=\alpha+
\beta\rho_2$ is a solution of $\iota(x)x=11$.

\item Finally, $x\in\Lambda_{D}^{\max}$. This can be checked by first converting $x$ to the related column vector $(\vec x_L)^T\in\Q^{50}$, and then verifying that $ A_{\max}\cdot (\vec x_L)^{T}\in \Z^{50}$.

\end{itemize}
    
\end{ex}

\section{Algorithm Summary}\label{sec:algo}
We now summarize the complete algorithm constructing our Ramanujan complexes $X_G(K^{\infty, p})$. 

\subsection{Structure of Output Complexes}\label{sec:complextype} 
We compute the details of the local structure of the output complexes as in \S\ref{ssec:abstractstructure}. 

\subsubsection{Split case}\label{sssec:splitcomplextype}
If $v_0$ is split, $\mc B(G_{v_0})$ has type $A_4$. The types of simplices are orbits of subsets of $\{0,1,2,3,4\}$ under the cyclic permutation $(1234)$. We can compute how many simplices of each type a vertex is contained in terms of sizes of various flag varieties: 
\[
\begin{array}{@{} c l @{\qquad} c l @{}}
\{0,1\} &
\dfrac{2(q_v^{5}-1)}{(q_v-1)} &
\{0,1,3\} &
\dfrac{3(q_v^{5}-1)(q_v^{4}-1)(q_v^{3}-1)}{(q_v^{2}-1)(q_v-1)^{2}} \\[8pt]
\{0,2\} &
\dfrac{2(q_v^{5}-1)(q_v^{4}-1)}{(q_v^{2}-1)(q_v-1)} &
\{0,1,2,3\} &
\dfrac{4(q_v^{5}-1)(q_v^{4}-1)(q_v^{3}-1)}{(q_v-1)^{3}} \\[8pt]
\{0,1,2\} &
\dfrac{3(q_v^{5}-1)(q_v^{4}-1)}{(q_v-1)^{2}} &
\{0,1,2,3,4\} &
\dfrac{(q_v^{5}-1)(q_v^{4}-1)(q_v^{3}-1)(q_v^{2}-1)}{(q_v-1)^{4}}
\end{array}
\]
Each $4$-simplex is also contained in $q_v  + 1$ chambers. 

Note that as in \S\ref{ssec:abstractstructure}, these complexes have universal covers different from those constructed in \cite{LSV05a, LSV05b}.

\subsubsection{Inert case}\label{sssec:urcomplextype}
If $v_0$ is inert, then $\mc B(G_{v_0})$ has type ${}^2 \! A'_4$. There are three types of vertices $\{0,1,2\}$, without loss of generality, labeled such that $0$ is hyperspecial and $2$ is non-hyperspecial special. The higher-dimensional simplices have type that are subsets of $\{0,1,2\}$ and $\mc B(G_{v_0})$ is made up of apartments that look like Figure \ref{fig:inertapart}.

We again calculate the degrees of each type of chamber at each vertex in terms of sizes of flag varieties:
\[ 
\renewcommand{\arraystretch}{1.25} 
\begin{array}{@{} c|cccc@{}}
& \{0,1,2\} &
  \{0,1\} & \{0,2\} & \{1,2\}\\ \hline 
0 &
(q_v^2 + 1)(q_v^{3}+1)(q_v^{5}+1)&
(q_v^{2}+1)(q_v^{5}+1) &
(q_v^{3}+1)(q_v^{5}+1) &
\text{---}\\
1 &
(q_v + 1)(q_v^{3}+1) &
(q_v+1) &
\text{---} &
(q_v^{3}+1)\\
2 &
(q_v+1)(q_v^2 + 1)(q_v^3 + 1) &
\text{---} &
(q_v+1)(q_v^{3}+1) &
(q_v^{2}+1)(q_v^{3}+1)
\end{array}
\]
Note that the reductive quotients of special fibers of the integral models at a vertex of type $0,1,2$ are $U(5), U(3) \times U(2)$, and $U(4)$ respectively. Formulas for point counts of unitary groups over finite fields may be found in \cite[\S2]{ATLAS}. 

Alternatively, the numbered labels on vertices in the tables of \cite{Tit79} give: 
\begin{itemize}
    \item Each edge of type $\{0,1\}$ is contained in $(q_v^3 + 1)$-chambers.
    \item Each edge of type $\{0,2\}$ is contained in $(q_v^2 + 1)$-chambers.
    \item Each edge of type $\{1,2\}$ is contained in $(q_v + 1)$-chambers.
\end{itemize}

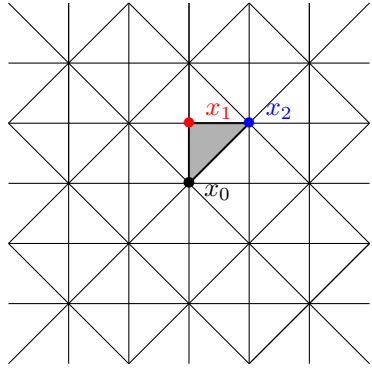
\begin{figure}[htbp]
				\begin{center}
					\tikzstyle{every node}=[scale=1]
					\begin{tikzpicture}[line width=0.4pt,scale=0.8][>=latex]
						\pgfmathsetmacro\ax{1}
						\pgfmathsetmacro\ay{1*sin(90)}

                        \foreach \k in {-2,-1,...,2}{
						\draw[
                            black,-] (-3*\ax,\k*\ay) -- (3*\ax,\k*\ay); 	}

                            \foreach \k in {-2,-1,...,2}{
						\draw[
                            black,-] (\k*\ax,-3*\ay) -- (\k*\ax,3*\ay); 	}

                            \foreach \k in {-2,0,2}{
						\draw[
                            black,-] (\k*\ax,0) -- (\k*\ax,3*\ay); 	}

                            \draw[
                            black,-] (-3*\ax,-3*\ay) -- (3*\ax,3*\ay); 	

                            \draw[
                            black,-] (-\ax,-3*\ay) -- (3*\ax,\ay); 
                            \draw[
                            black,-] (-3*\ax,-\ay) -- (\ax,3*\ay); 
                            \draw[
                            black,-] (-3*\ax,3*\ay) -- (3*\ax,-3*\ay); 

                            \draw[
                            black,-] (-3*\ax,\ay) -- (\ax,-3*\ay);

                            \draw[
                            black,-] (-\ax,3*\ay) -- (3*\ax,-\ay);

                            \draw[
                            black,-] (-3*\ax,-\ay) -- (-\ax,-3*\ay);

                            \draw[
                            black,-] (-3*\ax,\ay) -- (-\ax,3*\ay);

                            \draw[
                            black,-] (3*\ax,\ay) -- (\ax,3*\ay);
                        
                            \draw[
                            black,-] (3*\ax,-\ay) -- (\ax,-3*\ay);

                            \draw[
                            black,-] (3*\ax,-\ay) -- (\ax,-3*\ay);

                            \draw[
                            black,-,thick] (0,0) -- (0,\ay);

                            \draw[
                          black,-,thick] (0,0) -- (\ax,\ay);

                          \draw[
                          black,-,thick] (0,\ay) -- (\ax,\ay);

                          \draw[ fill=gray,opacity=0.6] (0,0) -- (\ax,\ay) -- (0,\ax) -- (0,0);
						
                        \node at (\ax,\ay) [blue] {\(\bullet\)};
						\node at (0,0) [thick] {\(\bullet\)};
                       \node at (0,0) [thick] {\(\bullet\)};
                       \node at (0,\ay) [red] {\(\bullet\)};
						\node at (1.5*\ax,1.2*\ay)[text=blue] {\(x_2\)};
						\node at (-18:0.5) {\(x_0\)};
                           \node at (0.5*\ax,1.2*\ay)[text=red] {\(x_1\)}
                        ;

					\end{tikzpicture}

				\end{center}
            \caption{Apartment and chamber of type ${}^2 \! A'_4$}\label{fig:inertapart}
			\end{figure}

\subsection{Algorithm}\label{ssec:algodetails}
In the notation of the previous sections, we are picking $G, K_0^\infty, \Lambda_{0,p}$ with $a := a_1$. The number $n$ will define $K^{\infty, p} = K_0^{\infty,p}(n)$ and therefore $\Lambda_p$. 

\subsubsection*{Input} Prime $p \neq 2,7$ and $n \in \Z_{>0}$ relatively prime to $2 \cdot 7 \cdot p$. 

\subsubsection*{Output} Ramanujan simplicial complex of the type described in
\[
\begin{cases}
\S\ref{sssec:splitcomplextype} & p \equiv 1,2,4 \pmod 7 \\
\S\ref{sssec:urcomplextype} & p \equiv 3,5,6 \pmod 7 
\end{cases}
\]
with
\[
n^{24}  \prod_{\substack{q | n, \\ q \equiv 1,2,4 \\ \bmod 7}}  \lf( \f1{q^{24}(q-1)} \prod_{i=0}^4 (q^5 - q^i) \ri) \times \prod_{\substack{q|n, \\ q \equiv 3,5,6 \\ \bmod 7}} \lf( \f1{q^{24}(q+1)} \prod_{i=0}^4 (q^5 + (-1)^i q^i) \ri)
\]
vertices if $p \equiv 1,2,4 \pmod 7$ and
\[
n^{25}  \prod_{\substack{q | n, \\ q \equiv 1,2,4 \\ \bmod 7}}  \lf( \f1{q^{25}} \prod_{i=0}^4 (q^5 - q^i) \ri) \times \prod_{\substack{q|n, \\ q \equiv 3,5,6 \\ \bmod 7}} \lf( \f1{q^{25}} \prod_{i=0}^4 (q^5 + (-1)^i q^i) \ri)
\]
type-$0$ vertices if $p \equiv 2,3,5 \pmod 7$.

\subsubsection*{Constants and Notation}\label{sssec algo}
We define constants:
\begin{itemize}
    \item $p_0=11$,    
    
    \item $\alpha_1$ is a root of the polynomial $x^5 - x^4 - 4x^3 + 3x^2 + 3x - 1$ and $\alpha_i := \alpha_1^i$, 
    \item $\rho_2 := (1 + \sqrt{-7})/2$, $\rho_{11}=2+\sqrt{-7}$,
    \item If $p$ is split, $p = \rho_p \bar \rho_p$ in $\Z[\rho_2]$, 
    \item $a = \rho_2\rho_{11}^3/\bar \rho_2\bar \rho_{11}^3$,
    \item $b = 10$,
    
    \item \[
    \m A = \begin{pmatrix}
    \alpha_1 & \alpha_1^{\sigma} & \alpha_1^{\sigma^2} & \alpha_1^{\sigma^3} & \alpha_1^{\sigma^4} \\
    \alpha_1^{\sigma} & \alpha_1^{\sigma^2} & \alpha_1^{\sigma^3} & \alpha_1^{\sigma^4} & \alpha_1 \\
    \alpha_1^{\sigma^2} & \alpha_1^{\sigma^3} & \alpha_1^{\sigma^4} & \alpha_1 & \alpha_1^{\sigma} \\
    \alpha_1^{\sigma^3} & \alpha_1^{\sigma^4} & \alpha_1 & \alpha_1^{\sigma} & \alpha_1^{\sigma^2} \\
    \alpha_1^{\sigma^4} & \alpha_1 & \alpha_1^{\sigma} & \alpha_1^{\sigma^2} & \alpha_1^{\sigma^3}
    \end{pmatrix}^{-1}.
    \]

\item Write $n=n_0 p_0^{n_{p_0}}$ with $\gcd(n_0,p_0)=1$ and $n_{p_0}\geq 0$. 
\end{itemize}
As notation
\begin{itemize}
\item $E$ is the number field $\Q[\rho_2]$,
\item $M$ is the totally real field $\Q[\alpha_1]$,
\item $L$ is the number field $\Q[\rho_2, \alpha_1]$ with ring of integers 
\[
\mc O_L = \Z\langle  \alpha_i,\ \rho_2\alpha_i : 0 \leq i \leq 4 \rangle,
\]
\item $\sigma:\alpha_1\mapsto\alpha_1^4+4\alpha_1^2-2,\ \rho_2\mapsto \rho_2$  and   $\ \bar {}:\alpha_1\mapsto\alpha_1,\ \rho_2\mapsto \bar \rho_2$ are automorphisms of $L$,
\item Given $\gamma \in L$,
\[
\m U_\gamma := \begin{pmatrix}
        \gamma & & & &\\
        & \gamma\gamma^\sigma & & &\\
        & & \gamma\gamma^\sigma\gamma^{\sigma^2} & & \\
        & & & \gamma\gamma^\sigma\gamma^{\sigma^2}\gamma^{\sigma^3} & \\
        & & & & \gamma\gamma^\sigma\gamma^{\sigma^2}\gamma^{\sigma^3}  \gamma^{\sigma^4}
        \end{pmatrix} .
\]
\end{itemize}
\subsubsection*{Algorithm: Precomputation}
We first do some precomputations for the prime $p$ determining the local structure of our expander complexes
\begin{enumerate}
    \item Find the gate set $S_{\bar \Lambda_{0,p}}$:
    \begin{enumerate}
        \item Following \S\S \ref{subsection despLambdaDmaxiota}, \ref{subsection QFCPMV} to find the transition matrices $A_{\max},B_{\max}$ and quadratic form $Q_{\max}$. One possible choice of $B_{\max}$ and $Q_{\max}$ is listed in \S \ref{subsection initialBmax}.

        \item Following the three-step strategy in \S \ref{subsection strategy} to solve the norm equation for $x\in\Lambda_D^{\max} \setminus \mc O_E$ such that
        \[\iota(x)x=p^e:=\begin{cases}p\quad &p\equiv 1,2,4\pmod{7},\\
        p^2&p\equiv 3,5,6\pmod{7}.\end{cases}\]
        \item Set $S_{\bar \Lambda_{0,p}}$ to be the set of $\psi x$ for these $x$ such that $\iota(x) x = p^e$ and where $e=1$ or $2$ and $\psi = \bar \rho_p^{-1}$ or $p^{-1}$ in the split and inert cases respectively.
        \end{enumerate}
    \item Compute the map $Q \iso S_{\bar \Lambda_{0,p}} : x \mapsto s_x$ needed for Construction \ref{cons:split}, \ref{cons:nonsplit}:
    \begin{enumerate}
        \item When $p$ or $n$ is divisible by $p_0$, use the expression of $\Lambda_{D}^{\max}$ and Chinese remainder theorem to calculate the matrix $\m V_{p_0}\in \GL_5(L)$ as in \S \ref{sssec:localtriofDmax}.
        
        \item If $p \equiv 1,2,4 \pmod 7$,
        \begin{enumerate}
            \item Find $\gamma_p \in \mc O_{L,(p)}$ such that $N_{L/E}(\gamma_p) \equiv a \pmod p$.
            \item Define the matrix $\m B_p = \m U_{\gamma_p} \m A^{-1}\m V_{p_0}^{-\log_{p_0}(\gcd(p,p_0))}$. 
            \item The vertices $\bar{\mc M} \subsetneq (\mc O_{E_{\rho_p}}/\rho_p)^5$ in $Q$ and simplices containing these vertices are determined by Construction \ref{cons:Qsplit}. 
            \item $s_{\bar{\mc M}}$ is the element of $S_{\bar \Lambda_{0,p}}$ such that
            \[
            (\m B_p^{-1} s_{\bar{\mc M}} \m B_p )(\mc O_{E_{\rho_p}}/\rho_p)^5 = \bar{\mc M}.
            \]
        \end{enumerate}
        \item If $p \equiv 3,5,6 \pmod 7$:
        \begin{enumerate}
            \item Find $\gamma_{p^2} \in \mc O_{L,(p)}$ such that $N_{L/E}(\gamma_{p^2}) \equiv a \pmod{p^2}$.
            \item Define the matrix $\m B_{p^2} = \m U_{\gamma_{p^2}} \m A^{-1}\m V_{p_0}^{-\log_{p_0}(\gcd(p,p_0))}$.
             \item The vertices $\bar{\mc M} \subsetneq (\mc O_{E_{\rho_p}}/p^2)^5$ in $Q$ and simplices containing these vertices are determined by Construction \ref{cons:Qinert} inputting $\m H = \m B_{p^2}^\dagger \m B_{p^2}$. 
            \item $s_{\bar{\mc M}}$ is the element of $S_{\bar \Lambda_{0,p}}$ such that
            \[
            (\m B_{p^2}^{-1} ps_{\bar{\mc M}} \m B_{p^2} )(\mc O_{E_{p}}/p^2)^5 = \bar{\mc M}.
            \]
            (if desired, it is possible to further conjugate by a $\m C_{p^2}$ as in Lemma \ref{lem:unitarymodn} so that the $\m H$ inputted to Construction \ref{cons:Qinert} is the antidiagonal Hermitian form $\m J$.) 
        \end{enumerate}
    \end{enumerate}

\end{enumerate}

\subsubsection*{Algorithm: Construction} Then we can construct our infinite family of complexes indexed by $n$:
\begin{enumerate}
    \item Reduce $S_{\bar \Lambda_{0,p}}$ mod $\bar \Lambda_p$:
    \begin{enumerate}
        \item Find $\gamma_n \in \mc O_{L,(n)}$ such that $N_{L/E}(\gamma_n) \equiv a \pmod n$ through Hensel's lemma and the Chinese remainder theorem.
        \item Define the matrix $\m B_n = \m U_{\gamma_n} \m A^{-1}\m V_{p_0}^{-\min(1,n_{p_0})}$.
        \item Find $\m C_n$ such that $\m C_n^\dagger \m B_n^\dagger \m B_n \m C_n \equiv \lb \cdot \m J \mod n$, where $\m J$ is the antidiagonal matrix with all ones and $\lambda \in (\Z/n)^\times$. 
        \item Define $U_5(\Z/n\Z)$ to be the subgroup $\GL_5(\Z[(1 + \sqrt{-7})/2]/n)$ preserving the Hermitian form $\m J$. Let 
        \[
        U^\sharp_5(\Z/n) := \begin{cases}
          U_5(\Z/n\Z)/U_1(\Z/n\Z)  & p \equiv 1,2,4 \pmod 7 \\
          U_5(\Z/n\Z) & p \equiv 3,5,6 \pmod 7
        \end{cases}
        \]
        \item $\m C_n^{-1} \m B_n^{-1} S_{\bar \Lambda_{0,p}} \m B_n \m C_n \pmod n$ gives a set of representatives of $U^\sharp_5(\Z/n\Z)$.  
    \end{enumerate}
    \item Construct $X_G(K(n)^{p, \infty})$ using the generators $S_{\bar \Lambda_{0,p}}$ of the group $\bar \Lambda_{0,p}/\bar \Lambda_p \cong U^\sharp_5(\Z/n\Z)$ and the assignment $Q \iso S_{\bar \Lambda_{0,p}} : x \mapsto s_x$ through:
    \[
    \begin{cases}
        \text{Construction } \ref{cons:split} & p \equiv 1,2,4 \pmod 7, \\
        \text{Construction } \ref{cons:nonsplit} & p \equiv 3,5,6 \pmod 7.
    \end{cases} 
    \]
\end{enumerate}

The most computationally intensive step is (1) in the precomputation, where the key is to solve Problem \ref{prob Qmaxboundcharpoly}. Luckily, the precomputation only needs to be done once for each infinite family of complexes---for the theoretical purpose of the algorithm being polynomial time in $n$ this is just an (extremely large) additive constant factor.

\newcommand{\etalchar}[1]{$^{#1}$}
\providecommand{\bysame}{\leavevmode\hbox to3em{\hrulefill}\thinspace}
\providecommand{\MR}{\relax\ifhmode\unskip\space\fi MR }
\providecommand{\MRhref}[2]{%
  \href{http://www.ams.org/mathscinet-getitem?mr=#1}{#2}
}
\providecommand{\href}[2]{#2}

\end{document}